\def\invlim{\mathop{\vtop{\ialign{##\crcr$\hfill{\lim}\hfil$\crcr
\noalign{\kern1pt\nointerlineskip}\leftarrowfill\crcr\noalign
{\kern -3pt}}}}\limits}
\def\dirlim{\mathop{\vtop{\ialign{##\crcr$\hfill{\lim}\hfil$\crcr
\noalign{\kern1pt\nointerlineskip}\rightarrowfill\crcr\noalign
{\kern -3pt}}}}\limits}
\def\lomapr#1{\smash{\mathop{\relbar\joinrel\longrightarrow}\limits^{#1}}}
 \def\verylomapr#1{\smash{\mathop{\relbar\joinrel\relbar\joinrel\relbar\joinrel\longrightarrow}\limits^{#1}}}
\def\veryverylomapr#1{\smash{\mathop{\relbar\joinrel\relbar\joinrel\relbar
\joinrel\relbar\joinrel\relbar\joinrel\longrightarrow}\limits^{#1}}}
\def\phi{\varphi}
\def\epsilon{\varepsilon}
\let\mathcal\mathscr
\newtheorem{theorem}{Theorem}[section]
 \newtheorem{lemma}[theorem]{Lemma}
 \newtheorem{proposition}[theorem]{Proposition}
 \newtheorem{corollary}[theorem]{Corollary}
\theoremstyle{definition}
\newtheorem{definition}[theorem]{Definition}
\newtheorem{remark}[theorem]{Remark}
\newtheorem{example}[theorem]{Example}
\newtheorem*{acknowledgments}{Acknowledgments}
\numberwithin{equation}{section}
\newcommand\Vtextvisiblespace[1][.3em]{%
  \mbox{\kern.06em\vrule height.3ex}%
  \vbox{\hrule width#1}%
  \hbox{\vrule height.3ex}}
\newcommand{\Qp}{\mathbf{Q}_p}
\renewcommand{\phi}{\varphi}
\newcommand{\R}{\mathrm {R} }
\newcommand{\G}{\mathrm {DF}_{\overline{K}} }
\newcommand{\pst}{pst}
  \newcommand{\ph}{\operatorname{pH}}
\newcommand{\ovk}{\overline{K} }
\newcommand{\ad}{\operatorname{ad} }
\newcommand{\Pic}{\operatorname{Pic} }
 \newcommand{\coker}{\operatorname{coker} }
 \newcommand{\Sym}{\operatorname{Sym} }
 \newcommand{\holim}{\operatorname{holim} }
  \newcommand{\Jac}{\operatorname{Jac} }
 \newcommand{\eet}{\operatorname{\acute{e}t} }
 \newcommand{\nr}{\operatorname{nr} }
 \newcommand{\Spec}{\operatorname{Spec} }
 \newcommand{\Hom}{\operatorname{Hom} }
 \newcommand{\Ext}{\operatorname{Ext} }
 \newcommand{\Gal}{\operatorname{Gal} }
 \newcommand{\tr}{ \operatorname{tr} }
 \newcommand{\can}{ \operatorname{can} }
 \newcommand{\Mod}{ \operatorname{Mod} }  
 \newcommand{\Bun}{ \operatorname{Bun} }
  \newcommand{\Banach}{ \operatorname{Banach} }
 \newcommand{\id}{ \operatorname{Id} }
\newcommand{\synt}{ \operatorname{syn} }
 \newcommand{\Cone}{\operatorname{Cone} }
 \newcommand{\Proj}{\operatorname{Proj} }
\newcommand{\st}{\operatorname{st} }
\newcommand{\hk}{\operatorname{HK} }
\newcommand{\dr}{\operatorname{dR} }
\newcommand{\rig}{\operatorname{rig} }
 \newcommand{\ff}{\operatorname{FF} }
 \newcommand{\kker}{\operatorname{Ker} }
 \newcommand{\crr}{\operatorname{cr} }
 \newcommand{\gr}{\operatorname{gr} }
 \newcommand{\im}{\operatorname{Im} }
 \newcommand{\kr}{^{\scriptscriptstyle\bullet}}
 \newcommand{\Modif}{\mathcal{M}}
 \newcommand{\Fmod}{\operatorname{FMod}}
   \newcommand{\DF}{\operatorname{DF}}
 \newcommand{\sh}{{\mathcal{H}}}
 \newcommand{\srr}{{\mathcal{R}}}
 \newcommand{\scc}{{\mathcal{C}}}
 \newcommand{\so}{{\mathcal O}}
 \newcommand{\se}{{\mathcal{E}}}
 \newcommand{\sx}{{\mathcal{X}}}
 \newcommand{\sss}{{\mathcal{S}}}
\newcommand{\sd}{{\mathcal{D}}}
\newcommand{\sm}{{\mathcal{M}}}
 \newcommand{\Z}{ {\mathbf Z} }
   \newcommand{\Q}{ {\mathbf Q}}
   \newcommand{\N}{{\mathbf N}}
   \newcommand{\BC}{{\mathcal B}{\mathcal C}}   
   \def\B{{\bf B}}
   \newcommand{\un}{\mathbbm 1} 
\numberwithin{equation}{section}
\begin{document}
 \title[Geometric syntomic cohomology and vector bundles on the Fargues-Fontaine curve]{Geometric syntomic  cohomology and vector bundles on the Fargues-Fontaine curve}
 \author{Wies{\l}awa Nizio{\l}}
 \date{\today}
\thanks{The author's research was supported in part by the grant ANR-14-CE25.}
 \email{ wieslawa.niziol@ens-lyon.fr}
 \begin{abstract}
 We show that geometric syntomic cohomology lifts canonically to the category of Banach-Colmez spaces and study its relation to extensions of modifications of vector bundles on the Fargues-Fontaine curve. We include some computations of geometric syntomic cohomology Spaces:  they are finite rank $\Qp$-vector spaces for ordinary varieties, but in the nonordinary case, these cohomology Spaces carry much more information, in particular they 
can have a non-trivial $C$-rank. This dichotomy is reminiscent of the Hodge-Tate period map for $p$-divisible groups.
 \end{abstract}
 \maketitle
 \tableofcontents
\section{Introduction}

  As is well-known, syntomic cohomology is a $p$-adic analog of Deligne-Beilinson cohomology. Recall that the latter is an absolute Hodge cohomology \cite{BE0}, i.e., it can be computed as $\Ext$ groups in the category of mixed Hodge structures; a feature that makes definition of
regulators straightforward.
  In \cite{DN}, it was shown that this is also the case for syntomic cohomology of varieties over a $p$-adic local filed $K$:
 it is an absolute $p$-adic Hodge cohomology, i.e., it can be computed as $\Ext$ groups in a category of admissible filtered $\phi$-modules of Fontaine. 
In this paper we prove an analog of this statement for geometric syntomic cohomology (i.e. for varieties over $\overline K$):
 it can be computed as $\Ext$ groups in the category of effective filtered $\phi$-modules over $\overline K$
(what Fargues would call ``$\varphi$-modules jaug\'es''). 
It follows that it has an extra rigid structure, namely, it comes from a complex of finite dimensional Banach-Colmez spaces; hence the geometric syntomic cohomology groups
are finite dimensional Banach-Colmez spaces.
In the case of ordinary varieties, these groups are of Dimension $(0,h)$, i.e. are finite dimensional $\Qp$-vector
spaces, but in the nonordinary case, these groups can have Dimension $(d,h)$, with $d\geq 1$, and thus carry much more information,
as we show on the example of the symmetric square of an elliptic curve.

We are now going to explain in more details what we have said above.
Recall that,
  for a log-smooth variety $\sx$ over $\so_K$ -- a complete discrete valuation ring of mixed characteristic $(0,p)$ with field of fractions $K$ and perfect residue field --  the arithmetic syntomic cohomology of $X$ is defined as a filtered Frobenius eigenspace of its crystalline cohomology
  \begin{equation}
  \label{syntomic-ar}
  \R\Gamma_{\synt}(\sx,r):=[\R\Gamma_{\crr}(\sx)^{\phi=p^r}\to \R\Gamma_{\crr}(\sx)/F^r],\quad r\geq 0.
  \end{equation}
    For a variety $X$ over $K$, this sheafifies well in the $h$-topology \footnote{Contrary to  crystalline cohomology itself which does not sheafify well. The sheafification process uses the fact that $h$-topology has a basis consisting of smooth varieties with semistable compactifications \cite{Be1}.}  and yields syntomic cohomology  $\R\Gamma_{\synt}(X,r)$, $r\geq 0$, of $X$ \cite{NN}. This cohomology comes equipped with a period map to \'etale cohomology
$$
\rho_{\synt}: \R\Gamma_{\synt}(X,r)\to\R\Gamma_{\eet}(X,\Q_p(r))
$$
that is a quasi-isomorphism after taking the truncation $\tau_{\leq r}$. Syntomic cohomology approximates better $p$-adic motivic cohomology than \'etale cohomology does; in particular,  \'etale $p$-adic regulators from $K$-theory  factor through syntomic cohomology. 

 As was shown in \cite{DN}, syntomic cohomology is an absolute p-adic Hodge cohomology. Namely, the data of the Hyodo-Kato cohomology $\R\Gamma_{\hk}(X_{\ovk})$ and  the de Rham cohomology $\R\Gamma_{\dr}(X_{\ovk})$ together with the Hyodo-Kato quasi-isomorphism $\iota_{\dr}:\R\Gamma_{\hk}(X_{\ovk})\otimes_{F^{\nr}}\ovk\stackrel{\sim}{\to}\R\Gamma_{\dr}(X_{\ovk})$, where $F$ is the maximal absolutely unramified subfield of $K$ and $F^{\nr}$ -- its maximal unramified extension,  allows to canonically associate to any variety $X$ over $K$ a complex $\R\Gamma_{\DF_K}(X,r)$ of Fontaine's admissible filtered $(\phi,N,G_K)$-modules. One proves that 
    $$
    \R\Gamma_{\synt}(X,r)\simeq\Hom_{D^b(\DF_K)}(\un,\R\Gamma_{\DF_K}(X,r)),\quad r\geq 0. 
        $$
  
   Syntomic cohomology from (\ref{syntomic-ar}) has a geometric version. Geometric syntomic cohomology is defined as a filtered Frobenius eigenspace of  geometric crystalline cohomology 
  \begin{equation}
  \R\Gamma_{\synt}(\sx_{\so_{\ovk}},r):=[\R\Gamma_{\crr}(\sx_{\so_{\ovk}})^{\phi=p^r}\to \R\Gamma_{\crr}(\sx_{\so_{\ovk}})/F^r],\quad r\geq 0,
  \end{equation}
  where $\ovk$ is an algebraic closure of $K$ and $\so_{\ovk}$ -- its ring of integers.
   For a variety $X$ over $\ovk$, this also sheafifies well in the $h$-topology\footnote{Here  crystalline cohomology itself  also sheafifies well and yields well-behaved crystalline cohomology $\R\Gamma_{\crr}(X)$ of $X$.} and yields syntomic cohomology of $X$ \cite{NN} $$\R\Gamma_{\synt}(X,r)=
   [\R\Gamma_{\crr}(X)^{p^r=\phi}\to \R\Gamma_{\crr}(X)/F^r].$$
   This cohomology comes equipped with a period map to \'etale cohomology
$$
\rho_{\synt}: \R\Gamma_{\synt}(X,r)\to\R\Gamma_{\eet}(X,\Q_p(r))
$$
that is a quasi-isomorphism after taking the truncation $\tau_{\leq r}$. In particular, the groups $H^i_{\synt}(X,r)$, $i\leq r$, are finite rank $\Qp$-vector spaces.
   
   The first main result of this paper is that geometric syntomic is an absolute $p$-adic Hodge cohomology. To explain what this means, we note that 
   we have the  isomorphisms
   $$
   H^i_{\crr}(X)\simeq H^i_{\hk}(X)\otimes_{F^{\nr}}\B^+_{\crr},\quad 
   H^i_{\crr}(X)/F^r      \simeq (H^i_{\dr}(X)\otimes_{\ovk}\B^+_{\dr})/F^r,  $$
 where $\B^*_*$ denotes rings of $p$-adic periods, and  
    an isomorphism
   $$ \iota_{\dr}: H^i_{\crr}(X)\otimes_{\B^+_{\crr}}\B^+_{\dr}\stackrel{\sim}{\to}H^i_{\dr}(X)\otimes_{\ovk}\B^+_{\dr}.      $$
 Hence  the data
     $$
     (H^i_{\crr}(X)\otimes_{\B^+_{\crr}}
     \B^+,(H^i_{\dr}(X)\otimes_{\ovk}\B^+_{\dr},F^r),\iota_{\dr})
     $$ gives  an effective filtered $\phi$-module over $\ovk$\footnote{$\phi$-module jaug\'e in the original terminology of Fargues.}. The category of such objects is equivalent to the category of  effective modifications of vector bundles on the Fargues-Fontaine curve\footnote{However the equivalence does not preserve relevant exact structures.}. For the above data: the vector bundle is associated to the $\phi$-module over $\B^+_{\crr}$ given by $H^i_{\crr}(X)$  and it is modified at infinity by the $\B^+_{\dr}$-lattice $F^r(H^i_{\dr}(X)\otimes_{\ovk}\B^+_{\dr})$.
    As in the arithmetic case this data can be lifted to complexes and we obtain the first main result of this paper.
 \begin{theorem}
 \label{first}
 \begin{enumerate}
 \item   To every variety $X$ over $\ovk$ one can associate a canonical complex of effective filtered $\phi$-modules $\R\Gamma_{\G}(X,r)$,$r\geq 0$.
\item There is a canonical quasi-isomorphism
$$
\R\Gamma_{\synt}(X,r)\simeq \R\Hom_+(\un,\R\Gamma_{\G}(X,r)),\quad r\geq 0,
$$
where $\R\Hom_+$ denotes the derived $\Hom$ in the category of effective filtered $\phi$-modules over $\ovk$.
\end{enumerate}
\end{theorem}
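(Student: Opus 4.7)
The plan is to follow the arithmetic strategy of \cite{DN}, replacing Fontaine's admissible filtered $(\phi,N,G_K)$-modules by Fargues's effective filtered $\phi$-modules over $\ovk$ (built from the period rings $\B^+$ and $\B^+_{\dr}$).

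For part (1), I would upgrade the cohomological triple
$(H^i_{\crr}(X)\otimes_{\B^+_{\crr}}\B^+,\ (H^i_{\dr}(X)\otimes_{\ovk}\B^+_{\dr},F^r),\ \iota_{\dr})$
recorded in the introduction to a canonical object at the level of complexes. For log-smooth semistable models over $\so_{\ovk}$ the Hyodo-Kato quasi-isomorphism already lifts to a morphism of complexes; $h$-descent, using that the $h$-topology has a basis consisting of such models \cite{Be1, NN}, then produces the desired functorial complex
\[
\R\Gamma_{\G}(X,r) = \bigl(\R\Gamma_{\crr}(X)\otimes^{L}_{\B^+_{\crr}}\B^+,\ (\R\Gamma_{\dr}(X)\otimes^{L}_{\ovk}\B^+_{\dr},\, F^{r}),\ \iota_{\dr}\bigr)
\]
for every variety $X$ over $\ovk$.

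For part (2), the key computation is $\R\Hom_+(\un, M)$ for a generic effective filtered $\phi$-module $M = (M_e, (M_{\dr}, F^\bullet), \iota_{\dr})$. The fundamental exact sequence of $p$-adic Hodge theory
\[
0 \to \Q_p \to (\B^+)^{\phi=1} \to \B_{\dr}/\B^+_{\dr} \to 0
\]
should be interpreted as a short resolution of $\un$ in this category, yielding
\[
\R\Hom_+(\un, M) \simeq \bigl[\, M_e^{\phi=1} \longrightarrow M_{\dr}/F^0 \,\bigr]
\]
as a homotopy fiber. Applying this to $M = \R\Gamma_{\G}(X,r)$, and using that the twist by $r$ sends $\phi=1$ to $\phi=p^r$ and $F^0$ to $F^r$, gives
\[
\R\Hom_+(\un, \R\Gamma_{\G}(X,r)) \simeq \bigl[\, (\R\Gamma_{\crr}(X)\otimes\B^+)^{\phi=p^r} \longrightarrow (\R\Gamma_{\dr}(X)\otimes\B^+_{\dr})/F^r \,\bigr].
\]
The identity $((-)\otimes\B^+)^{\phi=p^r}\simeq (-)^{\phi=p^r}$ on the Frobenius side, together with the Hyodo-Kato comparison on the filtration side, then identifies this complex with the syntomic one $[\R\Gamma_{\crr}(X)^{\phi=p^r} \to \R\Gamma_{\crr}(X)/F^r]$, proving the theorem.

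The principal obstacle will be the derived $\Hom$ computation above. The category of effective filtered $\phi$-modules over $\ovk$ is exact but not obviously abelian with enough injectives, so that making sense of $\R\Hom_+$ at the level of complexes requires the framework of Banach-Colmez spaces --- precisely what this paper introduces. Verifying that the fundamental exact sequence really produces the required resolution of $\un$, that derived tensor products over $\B^+$ and $\B^+_{\dr}$ are well-behaved on the complexes at hand, and that the complex-level Hyodo-Kato morphism $\iota_{\dr}$ is genuinely canonical under $h$-descent, will be the heart of the argument.
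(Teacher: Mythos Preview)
Your outline has the right shape but contains two genuine gaps and one misattribution.

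\textbf{Part (1): the triple is not a complex in $\G^+$.} An object of $\G^+$ is a pair $(D,\Lambda)$ with $\Lambda\subset D\otimes_{\B^+}\B^+_{\dr}$; there is a single underlying $\B^+_{\dr}$-module, and the lattice sits inside it. Your triple
\[
\bigl(\R\Gamma_{\crr}(X)\otimes^{L}\B^+,\ (\R\Gamma_{\dr}(X)\otimes^{L}\B^+_{\dr},F^r),\ \iota_{\dr}\bigr)
\]
has two different underlying complexes connected only by a quasi-isomorphism $\iota_{\dr}$, so termwise it is not a complex of objects of $\G^+$, and $h$-descent does not fix this. The paper resolves this precisely: it introduces an auxiliary dg category $\sd_{\ph}$ of ``$p$-adic Hodge complexes'' (triples with a quasi-isomorphism, defined as a homotopy limit of dg categories), proves that on the subcategory of \emph{geometric} complexes the tautological functor $\theta:\sd^g(\G^+)\to\sd^g_{\ph}$ is an equivalence, and defines $\R\Gamma_{\G}(X,r):=\theta^{-1}\R\Gamma^g_{\ph}(X,r)$. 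The alternative route, via Beilinson's Basic Lemma, produces genuine complexes $\R\Gamma^B_{\hk}(X)$, $\R\Gamma^B_{\dr}(X)$ for which $\iota_{\dr}$ is an actual isomorphism of complexes, so that the triple is literally a complex in $\G^+$.

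\textbf{Part (2): the resolution argument is not correct as written.} The sequence you quote,
\[
0\to\Q_p\to(\B^+)^{\phi=1}\to\B_{\dr}/\B^+_{\dr}\to 0,
\]
is false: $(\B^+)^{\phi=1}=\Q_p$. More importantly, the paper does not compute $\R\Hom_+(\un,-)$ by resolving $\un$. Instead it shows (Proposition~\ref{comp1}) that for $M,T\in C^b(\G^+)$,
\[
\Hom_{\sd^b(\G^+)}(M,T)\simeq\bigl[\Hom_{\B^+,\phi}(D,D')\oplus\Hom_{F\Mod_{\B^+_{\dr}}}((\Lambda,D_{\B^+_{\dr}}),(\Lambda',D'_{\B^+_{\dr}}))\to\Hom_{\B^+_{\dr}}(D_{\B^+_{\dr}},D'_{\B^+_{\dr}})\bigr],
\]
using that the exact structures on $\Mod_{\B^+}(\phi)$ and on $F\Mod_{\B^+_{\dr}}$ are \emph{split}. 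Specialising to $M=\un$ gives directly $\R\Hom_+(\un,T)\simeq[D'^{\phi=1}\to (D'\otimes\B^+_{\dr})/\Lambda']$, which is the formula you want; plugging in $T=\R\Gamma_{\G}(X,r)$ and unwinding yields the syntomic complex.

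\textbf{On Banach--Colmez spaces.} They play no role in making sense of $\R\Hom_+$; that computation is purely exact-category-theoretic. Banach--Colmez spaces enter only later, to lift the resulting complex $\R\Hom_+(\un,\R\Gamma_{\G}(X,r))$ to a complex of finite-dimensional Spaces (Theorem~\ref{second}).
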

  For an effective $\phi$-module $M$, the complex $\R\Hom_+(\un,M)$ has nontrivial cohomology only in degrees $0,1$; we call them $H^0_+(\ovk,M)$ and $H^1_+(\ovk,M)$. Hence the above spectral sequence reduces  to  the short exact sequence
\begin{equation}
\label{seq-intr}
0\to H^1_+(\ovk,H^{i-1}_{\G}(X,r))\to H^i_{\synt}(X,r)\to H^0_+(\ovk,H^i_{\G}(X,r))\to 0.
\end{equation}
This sequence can be easily seen to arise from {\em the fundamental (long) exact sequence}
\begin{equation}
     \label{exact-intr}
     \to (H^{i-1}_{\dr}(X)\otimes_{\ovk}\B^+_{\dr})/F^r\to H^i_{\synt}(X,r)\to (H^i_{\hk}(X)\otimes_{F^{\nr}}\B^+)^{\phi=p^r}\lomapr{\iota_{\dr}}
     (H^i_{\dr}(X)\otimes_{\ovk}\B^+_{\dr})/F^r\to
     \end{equation}
     The terms 
     $$
      (H^i_{\hk}(X)\otimes_{F^{\nr}}\B^+)^{\phi=p^r},\quad     (H^i_{\dr}(X)\otimes_{\ovk}\B^+_{\dr})/F^r
     $$
     are the key examples of $p$-adic Banach spaces that are $C$-points, $C=\widehat{\ovk}$, of finite dimensional Banach-Colmez spaces \cite{CB}. The latter are defined, roughly, as finite rank $C$-vector spaces modulo finite rank $\Q_p$-vector spaces, rigidified as  functors on a subcategory of perfectoid spaces. The second main result of this paper states that this is also the case for  geometric syntomic cohomology.
  \begin{theorem}
  \label{second}
  There exists a canonical complex of Banach-Colmez spaces ${\mathbb R}\Gamma_{\synt}(X,r)$
  such that
  \begin{enumerate}
  \item ${\mathbb R}\Gamma_{\synt}(X,r)(C)\simeq \R\Gamma_{\synt}(X,r)$; in particular, $  H^i{\mathbb R}\Gamma_{\synt}(X,r)(C)\simeq H^i_{\synt}(X,r)$.
  \item The fundamental exact sequence (\ref{exact-intr})
     lifts canonically to the category of Banach-Colmez spaces.
  \item The syntomic period map \cite{NN}
  $$\rho_{\synt}:\R\Gamma_{\synt}(X,r)\to\R\Gamma_{\eet}(X,\Q_p(r))$$
  can be lifted canonically to the category of Banach-Colmez spaces; the $H^0$-term in the exact sequence (\ref{seq-intr}) is equal to the image of this period map.
  \item The exact sequence (\ref{seq-intr}) can be lifted canonically to the category of Banach-Colmez spaces; the $H^1$-term is the  identity component of $H^i{\mathbb R}\Gamma_{\synt}(X,r)$, the $H^0$-term --  the space of its connected components.
  \end{enumerate}
  \end{theorem}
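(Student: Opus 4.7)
The strategy is to bootstrap Theorem~\ref{first} into the Banach--Colmez setting by promoting the derived functor $\R\Hom_+(\un,-)$ from one valued in $\Qp$-complexes to one valued in complexes of Banach--Colmez spaces. For a single effective filtered $\phi$-module $M=(M_{\crr},(M_{\dr},F^\bullet),\iota_{\dr})$, $\R\Hom_+(\un,M)$ is modeled by the two-term complex
$$
\bl M_{\crr}^{\phi=1}\longrightarrow M_{\dr}/F^0\br,
$$
and the key observation is that each of these two pieces is the $C$-points of a canonical finite-dimensional Banach--Colmez space in the sense of \cite{CB}: the Frobenius eigenspace is the Colmez space built from the $\phi=1$-part of a $\B^+$-module, while $M_{\dr}/F^0$ is the quotient of a $\B^+_{\dr}$-lattice by its filtration, again a finite-dimensional Banach--Colmez space (of pure $C$-dimension, when the filtration jump is nontrivial). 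Applying this construction term-by-term to the complex $\R\Gamma_{\G}(X,r)$ from Theorem~\ref{first} defines the candidate $\mathbb{R}\Gamma_{\synt}(X,r)$.

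First I would verify that the assignment $M\mapsto [M_{\crr}^{\phi=1}\to M_{\dr}/F^0]$ upgrades to an exact functor from effective filtered $\phi$-modules over $\ovk$ to the derived category of Banach--Colmez spaces, which amounts to checking that the natural maps $\iota_{\dr}$, the inclusion of $F^\bullet$, and the Frobenius projection are all genuine Banach--Colmez morphisms (i.e., are functorial on perfectoid test objects, not only on $C$-points). Taking $C$-points term-by-term then recovers the fundamental sequence (\ref{exact-intr}), proving simultaneously (1) and the lift (2). At this stage (4) is almost tautological: the quotient $M_{\dr}/F^0$ is a $C$-vector space and $M_{\crr}^{\phi=1}$ is, by Colmez's structure theorem, an extension of an étale (finite-dimensional $\Qp$) part by a $C$-part, so the connected components of $H^i\mathbb{R}\Gamma_{\synt}$ naturally match the étale $H^0_+$-term and the identity component matches the $H^1_+$-term.

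For (3) I would retrace the construction of the period map $\rho_{\synt}$ of \cite{NN}: each building block used there (Fontaine's period maps, comparisons with Hyodo--Kato and de Rham, truncations) is defined by morphisms of $\B^+$- or $\B^+_{\dr}$-modules that respect the respective structures, hence each gives a morphism of Banach--Colmez spaces. The target $\R\Gamma_{\eet}(X,\Qp(r))$ is a finite-dimensional $\Qp$-space, i.e.\ a Banach--Colmez space of $C$-dimension zero, so the lift exists as soon as the source complex is itself Banach--Colmez. To identify the image of $\rho_{\synt}$ with the $H^0_+$-term of (\ref{seq-intr}), I would use that this term is the space of connected components produced in (4); by the Colmez--Fontaine classification, a finite-dimensional Banach--Colmez space surjects onto its space of components via a canonical Qp-étale map, and a diagram chase shows that this is exactly what $\rho_{\synt}$ computes on $H^i$.

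The main obstacle is this term-by-term rigidification: to get an actual complex of Banach--Colmez spaces one must check that all differentials in $\R\Gamma_{\G}(X,r)$ lift to morphisms in Colmez's category and that the sheafification of geometric syntomic cohomology in the $h$-topology (used to pass from the semistable models producing $\R\Gamma_{\hk}$ and $\R\Gamma_{\dr}$ to arbitrary $X$ over $\ovk$) is compatible with the Banach--Colmez structure. This reduces to showing that the Hyodo--Kato and de Rham complexes, tensored with the appropriate period rings, define presheaves of Banach--Colmez spaces whose $h$-sheafification remains Banach--Colmez -- which will follow because finite-dimensional Banach--Colmez spaces form an abelian category stable under the finite limits and colimits that appear in the descent spectral sequences.
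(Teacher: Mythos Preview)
Your overall strategy---promoting $\R\Hom_+(\un,-)$ to a Banach--Colmez valued functor and applying it termwise to the complex of filtered $\phi$-modules produced by Theorem~\ref{first}---is exactly what the paper does. The paper packages this via the exact functors ${\mathbb X}^r_{\st}$, ${\mathbb X}^r_{\dr}$ of Section~\ref{kwak2} (equivalently, the formula~(\ref{bcspaces})), and the key technical device for obtaining an honest \emph{complex} (rather than a derived-category object) is Beilinson's Basic Lemma: it replaces $\R\Gamma_{\hk}(X)$ and $\R\Gamma_{\dr}(X)$ by genuine bounded complexes $\R\Gamma^B_{\hk}(X)$, $\R\Gamma^B_{\dr}(X)$ of finite $(\phi,N)$-modules over $F^{\nr}$ and finite filtered $\ovk$-vector spaces, with the Hyodo--Kato map an isomorphism of complexes. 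Once you have these, the Banach--Colmez lift is automatic term by term.

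This is where your ``main obstacle'' paragraph goes astray. You propose to check that $h$-sheafification of presheaves of Banach--Colmez spaces remains Banach--Colmez; but no such step is needed or wanted. The $h$-descent happens entirely on the linear-algebra side (producing the finite $(\phi,N)$-modules $H^i_{\hk}(X)$ and filtered $\ovk$-spaces $H^i_{\dr}(X)$), and the Banach--Colmez functors are applied only afterward to these finite-dimensional objects. Two further points you gloss over: for part~(3), the intermediate steps of the period map~(\ref{quasi}) involve $\B_{\st}$ and $\B_{\dr}$ without the ${}^+$, so one must pass through Ind-Banach--Colmez spaces before landing back in finite-dimensional ones; and for part~(4), your claim that $M_{\crr}^{\phi=1}$ decomposes as ``\'etale extended by $C$-part'' does not by itself show that $H^0_+$ is a finite $\Qp$-vector space---the paper deduces this instead from the embedding $H^0_+(\ovk,H^i_{\G}(X,r))\hookrightarrow H^i_{\eet}(X,\Qp(r))$ established via the period map, and the connectedness of $H^1_+$ from its being a quotient of the connected Space ${\mathbb X}^r_{\dr}(H^{i-1}_{\dr}(X))$.
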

  
  We also show that Theorem \ref{first} and Theorem \ref{second} have analogs for semistable formal schemes. 
 \begin{acknowledgments} We would like thank  Laurent Fargues for many very helpful  discussions concerning Banach-Colmez spaces and the Fargues-Fontaine curve.  Special thanks go to Pierre Colmez for explaining to us his work on Banach-Colmez spaces and helping us work out some of the examples included in this paper. We have also profited from exchanges with Bhargav Bhatt, Fr\'ed\'eric D\'eglise, Tony Scholl, and Peter Scholze.
 \end{acknowledgments}

  \subsubsection{Notation and conventions}
  For a field $L$, let ${\mathcal V}ar_{L}$ be the category of varieties over $L$.
    
  We will use a shorthand for certain homotopy limits. Namely,  if $f:C\to C'$ is a map  in the dg derived category of an abelian category, we set
$$[\xymatrix{C\ar[r]^f&C'}]:=\holim(C\to C^{\prime}\leftarrow 0).$$ 
We also set
$$
\left[\begin{aligned}
\xymatrix{C_1\ar[d]\ar[r]^f & C_2\ar[d]\\
C_3\ar[r]^g & C_4
}\end{aligned}\right]
:=[[C_1\stackrel{f}{\to} C_2]\to [C_3\stackrel{g}{\to} C_4]],
$$ 
where the diagram in the brackets  is a commutative diagram in the dg derived category.
\section{Preliminaries} Let $\so_K$ be a complete discrete valuation ring with fraction field
$K$  of characteristic 0 and with perfect
residue field $k$ of characteristic $p$. Let $\ovk$ be an algebraic closure of $K$ and let $\overline{\so}_K$ denote the integral closure of $\so_K$ in $\ovk$;  set $C:=\ovk^{\wedge}$ and let $C^{\flat}$ be its tilt with valuation $v$. Let
$W(k)=\so_F$ be the ring of Witt vectors of $k$ with 
 fraction field $F$ .  Set $G_K=\Gal(\overline {K}/K)$, and 
let $\phi=\phi_{W(\overline{k})}$ be the absolute
Frobenius on $W(\overline {k})$.

 In this section we will briefly recall some facts from $p$-adic Hodge Theory that may not yet be classical.

\subsection{Finite dimensional Banach-Colmez spaces.}
  Recall~\cite{CB} that a finite dimensional Banach-Colmez  space $W$ is, 
morally, a finite dimensional $C$-vector space 
 up to  a finite dimensional $\Q_p$-vector space. It has a {\em Dimension}\footnote{In~\cite{CB},
the dimension is called the ``dimension principale'', noted $\dim_{\rm pr}$, and
the height is called the ``dimension r\'esiduelle'', noted $\dim_{\rm res}$, and
the Dimension is called simply the ``dimension''.}
  ${\rm Dim}\,W=(a,b)$, where $a=\dim W\in\N$, the {\it dimension of $W$}, is the dimension of the $C$-vector space 
and $b={\rm ht}\,W\in\Z$, the {\it height of $W$}, is the dimension of the $\Q_p$-vector space. 
More precisely, a {\it Banach-Colmez space} ${\mathbb W}$ is a functor
$\Lambda\mapsto {\mathbb W}(\Lambda)$, from the category of
sympathetic algebras (spectral Banach algebras, such that $x\mapsto x^p$
is surjective on $\{x,\ |x-1|<1\}$; such an algebra is, in particular, perfectoid)
to the category of $\Q_p$-Banach spaces.
Trivial examples of such objects are:

$\bullet$ 
finite dimensional $\Q_p$-vector spaces $V$,
with associated functor $\Lambda\mapsto V$ for all $\Lambda$,

$\bullet$ ${\mathbb V}^d$, for $d\in\N$, with ${\mathbb V}^d(\Lambda)=\Lambda^d$, for
all $\Lambda$.

A Banach-Colmez  space ${\mathbb W}$ is {\it finite dimensional}
 if it ``is equal to ${\mathbb V}^d$, for some $d\in\N$, up to
finite dimensional $\Q_p$-vector spaces''.
More precisely, we ask that there exists finite dimensional $\Q_p$-vector spaces
$V_1,V_2$ and exact sequences
$$0\to V_1\to {\mathbb Y}\to {\mathbb V}^d\to 0,\quad
0\to V_2\to {\mathbb Y}\to {\mathbb W}\to 0,$$
so that ${\mathbb W}$ is obtained from ${\mathbb V}^d$ by ``adding $V_1$ and
moding out by $V_2$''.
Then $\dim{\mathbb W}=d$ and ${\rm ht}\,{\mathbb W}=\dim_{\Q_p}V_1-
\dim_{\Q_p}V_2$. (We are, in general, only interested in ${\mathbb W}(C)$ but, without
the extra structure, it would be impossible to speak of its Dimension.)
\begin{proposition}\label{BS1}
{\rm (i)} The Dimension of a finite dimensional Banach-Colmez  space is independant
of the choices made in its definition.

{\rm (ii)}  If $f:{\mathbb W}_1\to{\mathbb W}_2$ is a morphism of
finite dimensional Banach-Colmez  spaces, then ${\rm Ker}\,f$, ${\rm Coker}\,f$, and ${\rm Im}\,f$
are finite dimensional Banach-Colmez spaces, and
we have 
$$\dim {\mathbb W}_1=\dim {\rm Ker}\,f+\dim{\rm Im}\,f
\quad{\rm and}\quad
\dim {\mathbb W}_1=\dim {\rm Coker}\,f+\dim{\rm Im}\,f.$$

{\rm (iii)} If $\dim{\mathbb W}=0$, then ${\rm ht}\,{\mathbb W}\geq 0$.

{\rm (iv)} If ${\mathbb W}$ has an increasing filtration such that the successive quotients
are ${\mathbb V}^1$, and if ${\mathbb W}'$ is a sub-Banach-Colmez space of ${\mathbb W}$, then
${\rm ht}\,{\mathbb W}'\geq 0$.
\end{proposition}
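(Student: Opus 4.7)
The plan is to reduce everything to the following key fact from Colmez's paper: the category of finite dimensional Banach-Colmez spaces is abelian, and on it the Dimension $(\dim,\operatorname{ht})$ is additive with respect to short exact sequences. Once this is known, (i)--(iv) are all formal consequences, so the proof organizes itself around first establishing additivity, and then extracting the four statements.

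For (i), given two presentations of ${\mathbb W}$, with data $(V_1,V_2,{\mathbb Y},d)$ and $(V_1',V_2',{\mathbb Y}',d')$, I would construct a common refinement by taking the fibered product ${\mathbb Y}''={\mathbb Y}\times_{{\mathbb W}}{\mathbb Y}'$. One checks that ${\mathbb Y}''$ sits in short exact sequences of the same shape as ${\mathbb Y}$ and ${\mathbb Y}'$, but now with an unambiguous map to ${\mathbb V}^d$ and ${\mathbb V}^{d'}$; since ${\mathbb V}^d$ and ${\mathbb V}^{d'}$ are not isogenous to finite dimensional $\Q_p$-spaces for different $d$'s, one forces $d=d'$. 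Comparing the two resulting extensions gives $\dim_{\Q_p}V_1-\dim_{\Q_p}V_2=\dim_{\Q_p}V_1'-\dim_{\Q_p}V_2'$.

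For (ii), the main obstacle is to produce kernels and cokernels inside the class of finite dimensional Banach-Colmez spaces; additivity of Dimension then comes for free by counting. I would reduce to the case of a morphism $f\colon{\mathbb V}^a\to{\mathbb V}^b$, since general $f\colon{\mathbb W}_1\to{\mathbb W}_2$ can be handled by a devissage using the presentations and the snake lemma, modulo finite $\Q_p$-contributions. For such a morphism the entries lie in Fontaine's ring of periods, and Colmez's classification of exact sequences of $\mathbb V$'s identifies $\ker f$ and $\operatorname{coker} f$ as extensions of some ${\mathbb V}^c$ by finite $\Q_p$-spaces; the Dimension formulas then follow from the additivity already established for those simple extensions.

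Part (iii) is quick once (ii) is in hand: if $\dim{\mathbb W}=0$, the additivity of dim forces $d=0$ in any presentation after absorbing the ${\mathbb V}^d$ into the finite $\Q_p$ quotients via (ii), so ${\mathbb Y}$ can be chosen to be a finite dimensional $\Q_p$-vector space $V_1$, and ${\mathbb W}=V_1/V_2$ is then genuinely a quotient of $V_1$ by a subspace, giving $\operatorname{ht}{\mathbb W}=\dim_{\Q_p}V_1-\dim_{\Q_p}V_2\geq 0$. For (iv) I would induct on the length of the ${\mathbb V}^1$-filtration on ${\mathbb W}$. The base case is ${\mathbb W}={\mathbb V}^1$, where the classification of sub-Banach-Colmez spaces of ${\mathbb V}^1$ (the content is Colmez's computation: a proper sub-BC space of ${\mathbb V}^1$ is a finite dimensional $\Q_p$-vector space, and ${\mathbb V}^1$ itself has height $0$) gives $\operatorname{ht}{\mathbb W}'\geq 0$. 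For the induction step, let $0\to{\mathbb W}''\to{\mathbb W}\to{\mathbb V}^1\to 0$ with ${\mathbb W}''$ having a shorter such filtration; then ${\mathbb W}'\cap{\mathbb W}''\subset{\mathbb W}''$ has non-negative height by induction, while ${\mathbb W}'/({\mathbb W}'\cap{\mathbb W}'')\hookrightarrow{\mathbb V}^1$ has non-negative height by the base case, and the result follows from additivity of height in the short exact sequence given by (ii). The main obstacle is really (ii), since (iii) and (iv) are clean consequences of the abelian category formalism.
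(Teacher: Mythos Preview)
The paper does not actually prove this proposition: it cites \cite[Th.~0.4]{CB} for (i) and (ii), declares (iii) ``obvious'', and cites \cite[Lemme 2.6]{CF} for (iv). So there is nothing to compare your argument against except those references.

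Your sketch is broadly along the right lines, and you correctly identify (ii) as the real work. A few remarks. First, your opening sentence says you will reduce everything to the abelian-category fact from Colmez, but then for (i) and (ii) you proceed to sketch a proof \emph{of} that fact rather than deduce from it; the structure is a bit muddled. Second, your reduction in (ii) to morphisms ${\mathbb V}^a\to{\mathbb V}^b$ via d\'evissage and the snake lemma hides most of the difficulty: one needs to know, before invoking the snake lemma, that the relevant intermediate objects are again finite dimensional Banach-Colmez spaces, and that is precisely what is being proved. Colmez's actual argument for \cite[Th.~0.4]{CB} is substantially more delicate than your sketch suggests. Third, your argument for (iii) is more convoluted than necessary: once (i) is known, $\dim{\mathbb W}=d$ directly from the definition, so $\dim{\mathbb W}=0$ gives $d=0$, whence ${\mathbb Y}=V_1$ and ${\mathbb W}=V_1/V_2$ with $V_2\hookrightarrow V_1$; no appeal to (ii) is needed. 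Your inductive argument for (iv) is essentially the content of \cite[Lemme 2.6]{CF} and is fine as a sketch, granting the base case (proper sub-BC-spaces of ${\mathbb V}^1$ are finite $\Q_p$-vector spaces), which is itself a nontrivial input from \cite{CB}.
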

\begin{proof}
The first two points are the core of the theory~\cite[Th.~0.4]{CB}.
The third point is obvious and the fourth is \cite[Lemme 2.6]{CF}.
\end{proof}

\subsection{Period rings}
\subsubsection{Period rings}
Main references for this section are \cite{FF0}, \cite{FF1}, \cite{LF1}.  
Let $\B^+_{\crr}$, $\B^+_{\st}$, $\B_{\dr}^+$,   $\B_{\crr}$, $\B_{\st}$, $\B_{\dr}$ be the Fontaine's rings of crystalline, semistable,  and de Rham periods, respectively. Let $\iota: \B_{\st}\hookrightarrow \B_{\dr}$ be the canonical embedding.

Let $\se=W(C^{\flat})[1/p]$. Define
\begin{align*}
 \B^{b} & =\{\sum_{n\gg-\infty}[x_n]p^n\in \se|\,\exists\, K ,\forall n \,\lvert x_n\rvert \leq K\},\\
 \B^{b,+} & =\{\sum_{n\gg-\infty}[x_n]p^n\in \se| x_n\in \so_{C^{\flat}}\}=W(\so_{C^{\flat}})[1/p].
 \end{align*}
For $x=\sum_n[x_n]p^n\in\B^b$ and $\rho\in ]0,1[$, $r\geq 0$, set 
$$\lvert x\rvert_{\rho}=\sup_{n}\lvert x_n\rvert \rho^n,\quad 
v_r(x)=\inf_{n\in{\mathbf Z}}\{v(x_n)+nr\}.
$$
If $\rho=p^{-r}\in]0,1[$, we have $\lvert x\rvert_{\rho}=p^{-v_r(x)}$. For $r\geq 0$, $v_r$ is a valuation on $\B^b$; thus, for $\rho\in]0,1]$, $\lvert\cdot\rvert_{\rho}$ is a multiplicative norm.
One defines 
the rings $\B$ and  $\B^+$ as the completions of $\B^b$ and $\B^{b,+}$, respectively,  with respect to $(\lvert\cdot\rvert _{\rho})_{\rho\in ]0,1[}$. 
For a compact interval $I\subset]0,1[$, the ring $\B_I$ is the completion of $\B^b$ with respect to $(\lvert\cdot\rvert _{\rho})_{\rho\in I}$. It is a PID.
The rings $\B$, $\B^+$ are $\Q_p$-Frechet algebras and $\B^+$ is the closure of $\B^{b,+}$ in $\B$. The ring $\B_I$ is a $\Q_p$-Banach algebra and
 we have $\B=\invlim_{I\in ]0,1[}\B_I$.

  We have 
  \begin{enumerate}
  \item $\B^+=\bigcap_{n\geq 0}\phi^n(\B^+_{\crr}).$\footnote{Hence $\B^+$ is the ring $\B^+_{\rig}$ from classical $p$-adic Hodge Theory.}
  \item  For an $F$-isocrystal $D$, $(D\otimes_{F}\B^+_{\crr})^{\phi=1}=(D\otimes_{F}\B^+)^{\phi=1}=(D\otimes_{F}\B)^{\phi=1}$. 
  \item For $d<0$, $\B^{\phi=p^d}=0$. $\B^{\phi=1}=\Q_p$, and, for $d\geq 0$, $\B^{\phi=p^d}=(\B^+)^{\phi=p^d}$.
\item There is a natural map $\iota: \B\to\B^+_{\dr}$ compatible with the embedding $\B^+_{\crr}\to\B^+_{\dr}$.
\end{enumerate}

 Let $\B^+_{\log}$ be the period ring defined in the same way as $\B^+_{\st}$ but starting from $\B^+$ instead of from $\B^+_{\crr}$. We will denote by $\iota: 
 \B^+_{\log}\to\B^+_{\dr}$ the canonical imbedding. We have a canonical map 
  $ \B^+_{\log}\to \B^+_{\st}$ compatible with all the structures. For an $F$-isocrystal $D$, we have
 \begin{equation}
 \label{formula-passage}
 (D\otimes_{F}\B^+_{\st})^{\phi=1,N=0}=(D\otimes_{F}\B^+_{\log})^{\phi=1,N=0}. 
 \end{equation}

   The Robba ring $\srr$ is defined as  $\srr=\dirlim_{\rho \to 0}\B_{]0,\rho]}$, where $\B_{]0,\rho]}:=\invlim_{0<\rho^{\prime} \leq \rho}\B_{[\rho^{\prime},\rho]}$ is the completion of $\B^b$ with respect to $(\lvert \cdot\rvert_{\rho^{\prime}})_{0<\rho^{\prime}\leq \rho}$. Since $\phi: \B_{]0,\rho]}\stackrel{\sim}{\to}\B_{]0,\rho^p]}$ the ring $\srr$ is equipped with a bijective Frobenius. By \cite[Theorem 2.9.6]{Ke}, the ring $\B_{]0,\rho]}$ is B\'ezout. Any closed ideal of $\B_{]0,\rho]}$ is principal. Hence the ring $\srr$ is B\'ezout as well.
   
   \subsubsection{Period Rings and some Banach-Colmez spaces}
   \label{kwak2}
   Recall \cite{CB}, that the above rings of periods can be also defined starting from any sympathetic algebra instead of $C$. One obtains Rings (of periods)
   ${\mathbb B}^+,   {\mathbb B}^+_{\st}, {\mathbb B}^+_{\dr}   $
   and  natural morphisms $\iota:  {\mathbb B}^+_{\st}\hookrightarrow  {\mathbb B}^+_{\dr}$, $\iota:  {\mathbb B}^+\hookrightarrow  {\mathbb B}^+_{\dr}$. 
   We have $\B^+={\mathbb B}^+(C)$, $\B^+_{\st}={\mathbb B}^+_{\st}(C)$, $\B^+_{\dr}={\mathbb B}^+_{\dr}(C) $. 
   
   The category $\BC$ of finite dimensional Banach-Colmez spaces is abelian. The functor of $C$-points
   $$
   \BC \to \Banach,\quad X\mapsto X(C).
   $$
   is exact and faithful.  In particular, if $C\kr\in C^b(\BC)$ is a complex of finite dimensional Banach-Colmez spaces then we have that its cohomology $H^i(C\kr)$ is a Banach-Colmez space as well and, for its $C$-points,  we have $H^i(C\kr)(C)=H^i(C\kr(C))$.

   
    Recall that a $\phi$-module  (over $F$) is a finite rank vector space over $F$ equipped with a bijective semilinear Frobenius $\phi: D\to D$. A  $(\phi,N)$-module (over $F$) is a finite $\phi$-module (over $F$ ) equipped with a monodromy operator $N: D\to D$ such that $N\phi=p\phi N$. A  filtered $(\phi,N)$-module (over $K$) is a finite $(\phi,N)$-module over $F$ such that $D_K:=D\otimes_F K$ is a filtered $K$-vector space. 
   
     To $D$, a finite filtered $(\phi,N)$-module over $K$,  and to $r\geq 0$, one can associate Banach-Colmez spaces
     \begin{align*}
     D\mapsto {\mathbb X}^r_{\st}(D) & =(D\otimes_Ft^{-r}{\mathbb B}^+_{\st})^{\phi=1,N=0}=(D\otimes_F{\mathbb B}^+_{\st})^{\phi=p^r,N=0},\\
       D\mapsto {\mathbb X}^r_{\dr}(D) & =(D_K\otimes_Kt^{-r}{\mathbb B}^+_{\dr})/F^0=(D_K\otimes_K{\mathbb B}^+_{\dr})/F^r.
           \end{align*}
          These are exact functors. We also have a natural transformation $ \iota:{\mathbb X}^r_{\st}(D)\to {\mathbb X}^r_{\dr}(D)$ induced by the morphism $\iota: {\mathbb B}^+_{\st}\hookrightarrow  {\mathbb B}^+_{\dr}$. 
          
           A filtered $(\phi,N,G_K)$-module is a  tuple $(D,\phi,N,\rho, F^{\scriptscriptstyle\bullet})$, where
\begin{enumerate}
\item $D$ is a  finite dimensional $F^{\nr}$-vector
space;
\item  $\phi : D \to D$ is a bijective semilinear  Frobenius  map;
\item $N : D \to D$ is  a $F^{\nr}$-linear monodromy map such that $N\phi = p\phi N$;
\item  $\rho$ is a $F^{\nr}$-semilinear $G_K$-action on $D$  (hence $\rho|I_K$ is linear) that factors through a finite quotient of the inertia $I_K$ and that commutes with $\phi$ and $N$;
 \item  $F^{\scriptscriptstyle\bullet}$ is a decreasing finite filtration  of $D_K:=(D\otimes _{F^{\nr}}\ovk)^{G_K}$ by $K$-vector
spaces.
\end{enumerate}
 The above functors
extend to exact functors
 \begin{align*}
     D\mapsto {\mathbb X}^r_{\st}(D) & =(D\otimes_{F^{\nr}}t^{-r}{\mathbb B}^+_{\st})^{\phi=1,N=0}=(D\otimes_{F^{\nr}}{\mathbb B}^+_{\st})^{\phi=p^r,N=0},\\
       D\mapsto {\mathbb X}^r_{\dr}(D) & =(D_K\otimes_Kt^{-r}{\mathbb B}^+_{\dr})/F^0=(D_K\otimes_K{\mathbb B}^+_{\dr})/F^r.
           \end{align*}
from filtered $(\phi,N,G_K)$-modules to finite dimensional Banach-Colmez spaces.           We also have a natural transformation $\iota:{\mathbb X}^r_{\st}(D)\to {\mathbb X}^r_{\dr}(D)$.

    Recall that we have (cf. \cite[Prop.~10.6]{CB}),
\begin{align*}
\dim {\mathbb X}^r_{K}(D)=&\ (r\dim_KD_{K}-\sum_{i=1}^r\dim F^iD_{K},0),\\
\dim {\mathbb X}^r_{\rm st}(D)=& \sum_{r_i\leq r}(r-r_i,1), \quad{\text{where the $r_i$'s are the
slopes of $\varphi$, repeated with multiplicity.}}
\end{align*}
In particular, if $F^{r+1}D_{K}=0$ and if all $r_i$'s are $\leq r$ (we let $r(D)$ be the smallest $r$
with these properties), then
$$\dim {\mathbb X}^r_{\rm st}(D)=
(r\dim_{F^{\nr}}D-t_N(D_{}),\dim_{F^{\nr}}D_{})
\quad{\rm and}\quad
\dim {\mathbb X}^r_{\rm dR}(D)=(r\dim_KD_{K}-t_H(D_{K}),0).$$
Here $t_N(D)=v_p(\det\phi)$ and $t_H(D)=\sum_{i\geq 0}i\dim_K(F^iD_{K}/F^{i+1}D_{K})$. 

The kernel of the map 
$\iota: {\mathbb X}^r_{\rm st}(D)\to X^r_{\rm dR}(D)$  is $V_{\pst}(D)$ if $r\geq r(D)$
(\cite[Prop.~10.14]{CB}), where $V_{\pst}(D):=(D\otimes_{F^{\nr}}\B_{\st})^{\phi=1,N=0}\cap F^0(D_K\otimes_K\B_{\dr})$.

\subsection{Vector bundles on the Fargues-Fontaine curve}
Main references for this section are \cite{FF0}, \cite{FF1}, \cite{LF1}.    
\subsubsection{Definitions}Let $X_{\ff}$ be the (algebraic) Fargues-Fontaine curve associated to the tilt $C^{\flat}$  and to  $\Q_p$. We have
$$
X_{\ff}=\Proj(P):= \Proj (\oplus_{d\geq 0}P_d),\quad P_d:=(\B^+_{\crr})^{\phi=p^d}.
$$
In the above definition we can replace $\B^+_{\crr}$ by $\B$ or $\B^+$.
The map $\theta: \B^+_{\crr}\to C$ determines a distinguished point ${\infty}\in X_{\ff}(C)$. We have the canonical identification $\B^+_{\dr}\stackrel{\sim}{\to} \widehat{\so}_{X,\infty}$; let $t$ be the uniformizing element of $\B_{\dr}^+$ ($t\in P_1-\{0\}$) so that  $\B_{\dr}=\B^+_{\dr}[1/t]$. 
$X_{\ff}$ is a regular noetherian scheme of Krull dimension one which is locally a spectrum of a Dedekind domain.

Let $\Bun_{X_{\ff}}$ denote the category of vector bundles on $X_{\ff}$. Since $X_{\ff}$ is locally a spectrum of a Dedekind domain $\Bun_{X_{\ff}}$  is a quasi-abelian category\footnote{An additive category with kernels and cokernels is called {\em quasi-abelian} if every pullback of a strict epimorphism is a strict epimorphism and every pushout of a strict monomorphism  is  a strict monomorphism. Equivalently, an additive category with kernels and cokernels is called {\em quasi-abelian} if  $\Ext(\cdot,\cdot)$ is bifunctorial.}  \cite[1.2.16]{An}.
It is thus equipped with the induced kernel-cokernel exact structure: a {\em short exact sequence} $$
0\to M\stackrel{f}{\to} N\stackrel{g}{\to} P\to 0
$$
is a pair of morphisms $(f,g)$ such that $M=\ker (g), P=\coker (f)$. This is the same as the natural exact structure: locally, use the embedding of the category of torsion free modules of finite rank  into its left abelian envelope - the category of finitely generated modules\footnote{Recall that a short sequence in a quasi-abelian category is exact if and only if it is exact in its left abelian envelope.}.

  For every $d\in\Z$,  there exists a line  bundle $\so(d):=\widetilde{P[d]}$. This is a line bundle of degree $d$ and every line bundle on the curve $X_{\ff}$ is isomorphic to some $\so(d)$.  One defines degree of a vector bundle as the degree of its determinant bundle; slope --  as degree divided by rank. 
  For every slope $\lambda\in\Q$, there exists a stable bundle $\so(\lambda)$ with slope $\lambda$. These vector bundles are constructed in the following way.    For $\lambda=d/h$,  $h\geq 1$, $(d,h)=1$, one takes the base change $X_{\ff,h}:=X_{\ff,\Q_{p^h}}$, where $\Q_{p^h}$ is the degree $h$ unramified extension of $\Q_p$, and defines  $\so(\lambda)=\so(d,h):=\pi_*\so(d)$, $\pi: X_{\ff,h}\to X_{\ff}$. We have 
 $$
 \so(d,h)=\widetilde{M(d,h)},\quad 
  M(d,h)=\bigoplus_{i\in\N}(\B^+)^{\phi^h=p^{ih+d}}.
  $$
  This is a vector bundle on $X_{\ff}$  of rank $h$ and degree $d$, hence of slope $\lambda$.  The global sections functor and the functor $V\mapsto V\otimes_{\Q_p}\so$  induce an equivalence of categories between semistable vector bundles of slope zero and finite dimensional $\Q_p$-vector spaces. Since semistable vector bundles of slope zero have vanishing $H^1$ this is an equivalence of exact categories. 
  \subsubsection{Cohomology of vector bundles}
  We have 
  \begin{align*}
  \so(\lambda)\otimes \so(\mu)\simeq \so(\lambda + \mu); & \quad \so(\lambda)^{\vee}=\so(-\lambda);\\
  \Hom(\so(\lambda),\so(\mu))=0, \quad \lambda > \mu; & \quad \Ext(\so(\lambda),\so(\mu))=H^1(X_{\ff},\so(\mu-\lambda))=0,  \quad  \lambda \leq  \mu.
   \end{align*}
\begin{example} \cite[12.1,12.3]{FF0} 

  We have 
  \begin{equation}
  \label{exampleB}  
  H^0(X_{\ff},\so(d))=\begin{cases}P_d & \mbox{if } d\geq 0,\\
  0 & \mbox{if } d<0.
  \end{cases}\quad 
H^1(X_{\ff},\so(d))=\begin{cases}
0 & \mbox{if } d\geq 0,\\
\B^+_{\dr}/(t^{-d}\B^+_{\dr}+\Q_p) & \mbox{if } d <0.
\end{cases} 
   \end{equation}
To obtain this we write  $X_{\ff}\setminus \{\infty\}=\Spec \B_e$, for $\B_e=\B_{\crr}^{\phi=1}=(\B^+[1/t])^{\phi=1}$ - a principal ideal domain. There is an equivalence of exact categories
$$\Bun_{X_{\ff}}\stackrel{\sim}{\to}\scc,\quad \se\mapsto (\Gamma(X_{\ff}\setminus\{\infty\},\se),\widehat{\se}_{\infty}).
$$
Here $\scc$ is the category of $\B$-pairs \cite{Ber}, i.e., the category of pairs  $(M,W)$, where $W$ is a free $\B^+_{\dr}$-module of finite type and $M\subset W[1/t]$ is a sub free $\B_e$-module of finite type such that $M\otimes_{\B_e}\B_{\dr}\stackrel{\sim}{\to} W[1/t]$. If $\se$ corresponds to the pair $(M,W)$, then its cohomology can be computed by the following \v{C}ech complex
$$
\R\Gamma(X_{\ff},\se)=(M\oplus W\stackrel{\partial}{\to} W[1/t]),\quad \partial(x,y)=x-y.
$$
 
The line bundle $\so(d)$ corresponds to the pair $(\B_e,t^{-d}\B^+_{\dr})$. Hence
$$
\R\Gamma(X_{\ff}, \so(d))=(\B_e\oplus t^{-d}\B^+_{\dr}\to\B_{\dr}).
$$
From this, since $\B_{\dr}=\B_e+\B_{\dr}^+$ and $\B_e\cap\B^+_{\dr}=\Q_p$, we get (\ref{exampleB}). Both $H^0(X_{\ff},\so(d))$ and $H^1(X_{\ff},\so(d))$ are $C$-points of finite dimensional Banach-Colmez spaces. These spaces have dimensions $(d,1)$ resp. $(0,0)$, for $d\geq 0$, and $(0,0)$ resp. $(-d,-1)$, for $d<0$. Hence the Euler characteristic $\chi(X_{\ff},\so(d))=(d,1)$. 

  More generally, for $\lambda=d/h$,  $h\geq 1$, $(d,h)=1$, we have 
   \begin{equation}
  \label{exampleB1}  
  H^0(X_{\ff},\so(d,h))=\begin{cases}(\B^+)^{\phi^h=p^d} & \mbox{if } d\geq 0,\\
  0 & \mbox{if } d<0.
  \end{cases}\quad 
H^1(X_{\ff},\so(d,h))=\begin{cases}
0 & \mbox{if } d\geq 0,\\
\B^+_{\dr}/(t^{-d}\B^+_{\dr}+\Q_{p^h}) & \mbox{if } d <0.
\end{cases} 
   \end{equation}
 The vector  bundle $\so(d,h)$ corresponds to the pair $((\B^+[1/t])^{\phi^h=p^d},(\B^+_{\dr})^h)$, with the glueing map
 $u: (\B^+[1/t])^{\phi^h=p^d}\to (\B_{\dr})^h$ defined as $x\mapsto (x,\phi(x),\ldots,\phi^{h-1}(x))$. Hence
$$
\R\Gamma(X_{\ff}, \so(d,h))=((\B^+[1/t])^{\phi^h=p^d}\oplus (\B^+_{\dr})^h\lomapr{u-\can}(\B_{\dr})^h)
$$
 and computation (\ref{exampleB1}) follows  since $\B_{\dr}=\B_e+\B_{\dr}^+$ and $(\B^+[1/t])^{\phi^h=1}\cap\B^+_{\dr}=\Q_{p^h}$.

   Again, both $H^0(X_{\ff},\so(d,h))$ and $H^1(X_{\ff},\so(d,h))$ are $C$-points of finite dimensional Banach-Colmez spaces. These spaces have dimensions $(d,h)$ resp. $(0,0)$, for $d\geq 0$, and $(0,0)$ resp. $(-d,-h)$, for $d<0$. Hence the Euler characteristic $\chi(X_{\ff},\so(d,h))=(d,h)$. 
      \end{example}
      \subsubsection{Classification of vector bundles}
 We have the following classification theorem for vector bundles on $X_{\ff}$. 
\begin{theorem}(Fargues-Fontaine \cite[Theorem 6.9]{FF1})
\begin{enumerate}
\item The semistable vector bundles of slope $\lambda$ on $X_{\ff}$ are the direct sums of $\so(\lambda)$.
\item The Harder-Narasimhan filtration of a vector bundle on $X_{\ff}$ is split.
\item There is a bijection
\begin{align*}
\{\lambda_1\geq \cdots\geq \lambda_n | n\in\N,\lambda_i\in\Q\} & \stackrel{\sim}{\to}\Bun_{X_{\ff}}/\sim\\
(\lambda_1,\ldots,\lambda_n) & \mapsto [\oplus_{i=1}^{n}\so(\lambda_i)].
\end{align*}
\end{enumerate}
\end{theorem}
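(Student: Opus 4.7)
The plan is to treat (1) as the main statement and deduce (2), (3) formally from it. For (2), suppose $E$ carries a Harder--Narasimhan filtration $0 = E_0 \subset E_1 \subset \cdots \subset E_n = E$ with semistable quotients of strictly decreasing slopes $\lambda_1 > \cdots > \lambda_n$. Granting (1), each $E_i/E_{i-1}$ is a direct sum of copies of $\so(\lambda_i)$. I would split off the bottom-slope piece by observing that the obstruction to splitting $0 \to E_{n-1} \to E \to E/E_{n-1} \to 0$ lies in $\Ext^1(E/E_{n-1}, E_{n-1})$, which by a long exact sequence unwinding the filtration on $E_{n-1}$ (using that $\Ext^2$ vanishes on the one-dimensional scheme $X_{\ff}$) reduces to the vanishing of $\Ext^1(\so(\lambda_n), \so(\lambda_i)) = H^1(X_{\ff}, \so(\lambda_i - \lambda_n)) = 0$ for $i < n$; this is the case $\lambda \leq \mu$ already recorded above. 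Descending induction then splits the whole HN filtration. Part (3) now becomes formal: the map $E \mapsto (\lambda_1, \ldots, \lambda_n)$ with multiplicities is well-defined by uniqueness of HN, surjective via $(\lambda_i) \mapsto \bigoplus \so(\lambda_i)$, and injective by (1) applied isocline piece by isocline piece.

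For (1), I would twist by $\so(-\lambda)$ to reduce to slope $0$: the goal becomes showing that every semistable bundle $E$ on $X_{\ff}$ of slope $0$ and rank $h$ is isomorphic to $\so^{\oplus h}$. I would induct on $h$; the rank-one case is the classification of line bundles by their degree. For the inductive step, any non-zero section $s : \so \to E$ is automatically injective since $\so$ is torsion-free of rank one; its saturation is a sub-line-bundle $L$ of $E$ containing $\so$, so $\deg L \geq 0$, while semistability of $E$ forces $\deg L \leq 0$, hence $L = \so$. The quotient $E/\so$ has rank $h-1$ and slope $0$ and remains semistable (a destabilizing subbundle of positive slope would pull back to destabilize $E$), so by induction $E/\so \cong \so^{\oplus(h-1)}$, and the resulting extension $0 \to \so \to E \to \so^{\oplus(h-1)} \to 0$ splits by $\Ext^1(\so, \so) = H^1(X_{\ff}, \so) = 0$.

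The main obstacle will be showing that every slope-$0$ semistable bundle admits a non-zero global section in the first place. Using the $\B$-pair dictionary $E \leftrightarrow (M, W)$ recalled in the example, this amounts to producing a non-zero common vector in $M \cap W \subset W[1/t]$, where $M$ is a free $\B_e$-module of rank $h$ and $W$ a $\B^+_{\dr}$-lattice of rank $h$ in $M \otimes_{\B_e} \B_{\dr}$, the semistability-and-slope hypothesis encoding the compatibility between the two structures. Through the equivalence of $\B$-pairs with $(\phi, N, G_K)$-modules, this reduces to the classical theorem of Colmez--Fontaine that weakly admissible filtered $(\phi, N)$-modules are admissible, applied to the $\phi$-module $M \otimes_{\B_e} \B_{\crr}$ with the filtration induced by $W$. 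This deep $p$-adic Hodge theoretic input is the single genuinely analytic ingredient of the classification; every other step, including the reductions above, is a formal manipulation on $X_{\ff}$ once enough periods are known to exist.
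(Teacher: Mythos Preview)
The paper does not prove this theorem; it is quoted from \cite[Theorem 6.9]{FF1} without argument. So there is no ``paper's own proof'' to compare against, and I can only assess your outline on its own terms.

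Your deductions of (2) and (3) from (1) are correct and standard. In the proof of (1), the reduction ``twist by $\so(-\lambda)$'' is not literally valid for $\lambda\notin\Z$: when $\lambda=d/h$ in lowest terms with $h>1$, the bundle $\so(-\lambda)$ has rank $h$, so tensoring with it does not preserve rank. The usual fix is to pull back along the unramified cover $X_{\ff,h}\to X_{\ff}$, where $\so(\lambda)$ becomes a line bundle, and then twist; one must check that semistability is preserved under this pullback and that the conclusion descends. This is routine but should be said.

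The real gap is in the final paragraph. There is no equivalence between $\B$-pairs and filtered $(\phi,N,G_K)$-modules: $\B$-pairs are equivalent (via Berger) to $(\phi,\Gamma)$-modules over the Robba ring, a strictly larger category. Moreover, $M\otimes_{\B_e}\B_{\crr}$ is a $\B_{\crr}$-module, not a finite-dimensional $F$-vector space, so the Colmez--Fontaine theorem does not apply to it. In fact the implication runs the other way: the Fargues--Fontaine classification \emph{implies} ``weakly admissible $\Rightarrow$ admissible'' as the special case where the bundle arises by modifying $\se(D)$ for an isocrystal $D$, but a general semistable slope-$0$ bundle need not be presented this way a priori. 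Producing a nonzero section of an arbitrary semistable slope-$0$ bundle is the genuine content of the theorem, and Fargues--Fontaine establish it by analytic methods on the adic curve (periods of $p$-divisible groups, Lubin--Tate theory) rather than by reduction to Colmez--Fontaine. You have correctly located where the hard step lies, but the proposed bridge to classical $p$-adic Hodge theory does not carry the weight.
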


  Let $ \Mod_{\srr}(\phi)$ be the category of $\phi$-modules over the Robba ring $\srr$, i.e., finite type projective $\srr$-modules $D$\footnote{Since $\srr$ is B\'ezout, an $\srr$-module $D$ is projective of finite type if and only if it is torsion free of finite type if and only if it is free of finite type.}  equipped with a $\phi$-linear isomorphism $\phi: D\stackrel{\sim}{\to} D$. Since $\srr$ is B\'ezout, this implies \cite[2.7.2]{An} that $ \Mod_{\srr}(\phi)$ is quasi-abelian. The induced kernel-cokernel exact structure is the same as the natural exact structure: use the embedding of the category $\Mod_{\srr}$ into its left abelian envelope - the category of finitely generated $\srr$-modules.

   Let $\Mod_{\B}(\phi)$ be the category of finite type projective  $\B$-modules $D$  equipped with a semi-linear  isomorphism $\phi: D{\to} D$. It is an exact category. We have the following diagram of equivalences of exact categories  \cite[Prop. 7.16, Theorem 7.18]{FF1}
\begin{equation}
\label{diag1}
\xymatrix{
 \Mod_{\srr}(\phi)
  &  \Mod_{\B}(\phi)\ar[l]_{\sim}\ar[r]^{\sim}& \Bun_{X_{\ff}}\\
 }
 \end{equation}
The first map is  induced by the inclusion $\B\subset \srr$. Via this map, the classification theorem of Kedlaya for $\phi$-modules over $\srr$ \cite{Ke} yields that there is a bijection 
\begin{align}
\label{classB}
\{\lambda_1\geq \cdots\geq \lambda_n | n\in\N,\lambda_i\in\Q\} & \stackrel{\sim}{\to}\Mod_{\B}(\phi)/\sim\\
(\lambda_1,\ldots,\lambda_n) & \mapsto [\oplus_{i=1}^{n}\B(-\lambda_i)].\notag
\end{align}
In particular, every $\phi$-module over $\B$ is free of finite type\footnote{But, in general, projective $\B$-modules are not free \cite[11.4.1]{FF0}.}.

The second map  in (\ref{diag1}) is defined by sending
\begin{equation}
D\mapsto \se(D),
\end{equation}
where $\se(D)$ is the sheaf associated to the $P$-graded module
$
\oplus_{d\geq 0}D^{\phi=p^d}.
$ Hence $\se(\B(i))=\so(-i)$ and 
we have $D\otimes\B^+_{\dr}\stackrel{\sim}{\to}\widehat{\se(D)}_{\infty}$. 
This equivalence of exact categories implies that the category $ \Mod_{\B}(\phi)$ is  also quasi-abelian\footnote{We note here that the ring $\B$ is not B\'ezout.}. Since the canonical exact structure on $\Bun_{X_{\ff}}$ agrees with the quasi-abelian kernel-cokernel exact structure it follows that this is also the case in $\Mod_{\B}(\phi)$. Since we know that this is also the case in $\Mod_{\srr}(\phi)$, it follows that the first map in (\ref{diag1}) is an equivalence of exact categories.

 \subsubsection{Modifications of vector bundles and  filtered $\phi$-modules} We will recall the definitions of these categories \cite{LF1}.
  A {\em modification} of vector bundles on the curve $X_{\ff}$ is a triple $(\se_1,\se_2,u)$, where $\se_1$, $\se_2$ are vector bundles on $X_{\ff}$ and $u$ is an isomorphism 
$\se_{1|X_{\ff}\setminus\{\infty\}}\simeq \se_{2|X_{\ff}\setminus\{\infty\}}$. 
Modification $(\se_1,\se_2,u)$ is
 called {\em effective} if $u(\se_1)\subset \se_2$; it is called 
 {\em admissible} if $\se_1$ is a semistable vector bundle of slope $0$. Let $\Modif$, $\Modif^{+}$, $\Modif^{\ad}$ denote the corresponding  exact categories. 

   Modifications can be also described as pairs $(\se, \Lambda)$, where $\se$ is a vector bundle and $\Lambda$ is a $\B^+_{\dr}$-lattice $\Lambda\subset \widehat{\se}_{\infty}[1/t]$.  
To a modification $(\se_1,\se_2,u)$ one associates the pair $(\se_2,u(\widehat{\se}_{1,\infty}))$. Effective modifications correspond to pairs $(\se,\Lambda)$ such 
that $\Lambda\subset \widehat{\se}_{\infty}$. This correspondence preserves exact structures.

   A filtered $\phi$-module (over $\ovk$) \footnote{Fargues' \cite[4.2.2]{LF1} original name was "$\phi$-module jaug\'e".}  $(D,\Lambda)$  consists of a $\phi$-module $D$ over $\B^+$ and a $\B^+_{\dr}$-lattice $\Lambda\subset D\otimes \B_{\dr}$. It is {\em effective} if 
$\Lambda\subset D\otimes \B^+_{\dr}$. A filtered $\phi$-module  is called {\em admissible} if 
$
D^{\phi=1}\cap\Lambda
$ is a $\Q_p$-vector space of rank equal to the rank of $D$ over $\B^+$. Denote by 
$
\G, \G^{+}, \G^{\ad}
$
the corresponding exact categories. 
We have an equivalence of categories $\G\stackrel{\sim}{\to}\Modif$ that induces equivalences between the effective and admissible subcategories. We note that this is not an equivalence of exact categories.
\section{Extensions of modifications}In this section we study extensions in the categories of modifications and of filtered $\phi$-modules over $\ovk$.
\subsection{Extensions of $\phi$-modules}We start with extensions of $\phi$-modules. 
 Let $\Mod_{\B^+}$ be the category of free $\B^+$-modules of finite rank. It is an exact category (with a split exact structure) and we will denote by $\sd_{\B^+}$, $D_{\B^+}$ its derived dg category and derived category, respectively.  We note that, since the exact structure is split, all quasi-isomorphisms are actually homotopy equivalences. In particular, 
 $D_{\B^+}=H_{\B^+}$ (the homotopy category).
 For $D_1,D_2\in \sd^b_{\B^+}$, we have a quasi-isomorphism  $\Hom_{\sd^b_{\B^+}}(D_1,D_2)\stackrel{\sim}{\to }\Hom_{\B^+}(D_1,D_2)$. We have  similar statements for analogous categories $\Mod_{\B^+_{\dr}}$ and $\Mod_{\B_{\dr}}$. We note that, since $\B^+_{\dr}$ is a PID and $\B_{\dr}$ is a field, these two categories are quasi-abelian (with the kernel-cokernel exact structure equal to the natural one) \cite[2.7.2]{An}. In $\Mod_{\B^+_{\dr}}$, a morphism is strict if and only if its cokernel taken in the category of $\B^+_{\dr}$-modules is torsion-free or, equivalently, its kernel is $t$-saturated in the ambient module\footnote{A $\B^+_{\dr}$-module $N$  is called $t$-saturated in a $\B^+_{\dr}$-module $M$, for $N\hookrightarrow M$,  if every $x\in M$ such that $tx\in N$ is actually in $N$.}.

Let $\Mod_{\B^+}(\phi)$ be the category of free $\B^+$-modules $D$  of finite rank equipped with an isomorphism $\phi^*D\stackrel{\sim}{\to} D$. It is an exact category.
The exact structure is split, i.e., $\Ext^1$ in $\Mod_{\B^+}(\phi)$ is trivial. To see this, one first proves a classification of $\phi$-modules over $\B^+$ analogous to the one for $\B$ in (\ref{classB})  then computes $\Ext^1$ for the simple modules \cite[Theorem 7.23, Proposition 7.25]{LF1}. 

   Let $\Mod_{F^{\nr}}(\phi)$ be the category of finite rank modules over $F^{\nr}$ with a semilinear isomorphism $\phi: D\to D$. It is an exact category with split exact structure. There is an (exact)  functor $\Mod_{F^{\nr}}(\phi)\to \Mod_{\B^+}(\phi)$, $D\mapsto D\otimes_{K^{\nr}_0}\B^+$. Using the Dieudonn\'e-Manin decomposition in $\Mod_{F^{\nr}}(\phi)$ we see that this functor induces bijection on objects of the two categories. Clearly though we have a lot more morphisms in $\B^+$-modules.

   We will denote by $\sd_{\B^+}(\phi)$, $D_{\B^+}(\phi)$  the  derived  categories of $\Mod_{\B^+}(\phi)$. For $D_1,D_2\in \Mod_{\B^+}(\phi)$, let $\Hom_{\B^+,\phi}(D_1,D_2)$ denote the group of Frobenius morphisms. We have the exact sequence
\begin{align}
\label{exact}
0\to \Hom_{\B^+,\phi}(D_1,D_2)\to \Hom_{\B^+}(D_1,D_2)\stackrel{\delta}{\to } \Hom_{\B^+}(D_1,\phi_*D_2)\to 0.\end{align}
where  $\delta: x\mapsto \phi_{D_2}x-\phi_*(x)\phi_{D_1}$. Hence,  $\Hom_{\B^+,\phi}(D_1,D_2)=\Cone(\delta)[-1]$.
It follows that, for $D_1,D_2\in \sd^b_{\B^+}(\phi)$, we have a quasi-isomorphism  $\Hom_{\sd^b_{\B^+}(\phi)}(D_1,D_2)\stackrel{\sim}{\to }\Hom_{\B^+,\phi}(D_1,D_2)$, i.e., 
$$
\Hom_{\sd^b_{\B^+}(\phi)}(D_1,D_2)=\Cone(\Hom_{\B^+}(D_1,D_2)\stackrel{\delta}{\to} \Hom_{\B^+}(D_1,\phi_*D_2))[-1]
$$

   We compute similarly that (with the obvious notation),  for $D_1,D_2\in \sd^b_{\B}(\phi)$, we have a quasi-isomorphism
   $$
   \Hom_{\sd^b_{\B}(\phi)}(D_1,D_2)=\Hom_{\B,\phi}(D_1,D_2):=\Cone(\Hom_{\B}(D_1,D_2)\stackrel{\delta}{\to} \Hom_{\B}(D_1,\phi_*D_2))[-1]
   $$
The inclusion $\B^+\subset \B$  defines an equivalence of categories \cite[7.7]{FF1}
\begin{equation}
\label{exact1}
\Mod_{\B^+}(\phi)\stackrel{\sim}{\to} \Mod_{\B}(\phi).
\end{equation}
This is shown in \cite{FF1} by proving that the above functor is fully faithful \cite[Prop. 7.20]{FF1}, what implies, by the classification of $\phi$-modules over $\B$ (\ref{classB}), an analogous classification of $\phi$-modules over $\B^+$. The wanted equivalence of categories follows. 
On the other hand, if we equip  $\Mod_{\B^+}(\phi)$ with the natural exact structure, the functor (\ref{exact1}) is not an equivalence of exact categories. It is because for the natural exact structure
$\Ext(\B^+,\B^+(1))=0$ (by (\ref{exact})) but for the quasi-abelian kernel-cokernel exact structure
 $\Ext(\B^+,\B^+(1))=\Ext(\B,\B(1))=\Ext(\so,\so(-1))=H^1(X_{\ff},\so(-1))=C/\Qp$. The corresponding exact sequences have cokernel maps which are not surjective for the natural exact structure.  In what follows we will work exclusively with the natural exact structure.

 \subsection{Extensions of filtered $\phi$-modules}
 
   Consider the quasi-abelian 
category $\Fmod_{\B^+_{\dr}}$ of pairs $(\Lambda, M)$, $\Lambda\subset M$, where  $M\in \Mod_{\B^+_{\dr}}$, and $\Lambda$ is a $\B^+_{\dr}$-lattice in $M[1/t]$.   We note that a morphism $(f_{\Lambda},f_M): (\Lambda,M)\to (\Lambda^{\prime},M^{\prime})$ is strict \cite[1.1.3]{Sn} if and only if so are the morphisms $f_{\Lambda}$ and $f_M$. Since $\B^+_{\dr}$ is a PID, elementary divisors theory, gives us that every exact sequence
$$
0\to (\Lambda_1,M_1)\to (\Lambda_2,M_2)\to (\Lambda_3, M_3)\to 0$$ splits. Hence, for $M_1,M_2\in \sd^b(\Fmod_{\B^+_{\dr}})$, we have  
$\Hom_{\sd^b(\Fmod_{\B^+_{\dr}})}(M_1,M_2)=\Hom_{\Fmod_{\B^+_{\dr}}}(M_1,M_2)$.

Similarly, we define the   quasi-abelian category $\Fmod_{\B_{\dr}}$ of pairs $(\Lambda,M)$, $\Lambda\subset M$, where $M\in \Mod_{\B_{\dr}}$, and $\Lambda$ is a $\B^+_{\dr}$-lattice in $M$. Again, for $M_1,M_2\in \sd^b(\Fmod_{\B_{\dr}})$, we have
 $\Hom_{\sd^b(\Fmod_{\B_{\dr}})}(M_1,M_2)=\Hom_{\Fmod_{\B_{\dr}}}(M_1,M_2)$. 

 Let $M=(D,\Lambda)$, $T=(D^{\prime},\Lambda^{\prime})$ be two complexes in $C^b(\G^{+} )$, $C^b(\G )$, respectively. Define the complexes   $\Hom^{+}(M,T)$, $\Hom^{}(M,T)$ as the following homotopy fibers
\begin{align*}
 \Hom^{+}(M,T) & :=[\Hom_{\B^+,\phi}(D,D^{\prime})\oplus  \Hom_{F\Mod_{\B^+_{\dr}}}((\Lambda,D_{\B^+_{\dr}}),(\Lambda^{\prime},D^{\prime}_{\B^+_{\dr}}))\verylomapr{\can-\can}
 \Hom_{\B^+_{\dr}}(D_{\B^+_{\dr}},D^{\prime}_{\B^+_{\dr}} )],\\
  \Hom^{}(M,T) & :=[\Hom_{\B^+,\phi}(D,D^{\prime})\oplus  \Hom_{F\Mod_{\B_{\dr}}}((\Lambda,D_{\B_{\dr}}),(\Lambda^{\prime},D^{\prime}_{\B_{\dr}}))\verylomapr{\can-\can}
 \Hom_{\B_{\dr}}(D_{\B_{\dr}},D^{\prime}_{\B_{\dr}} )]
\end{align*}
Complexes $\Hom^{+}$, $\Hom^{}$ compose naturally.  
 \begin{proposition}
\label{comp1}
 We have ($*=\Vtextvisiblespace[0.2cm], \ad$)
 \begin{align*}
 \Hom_{\sd^b(\G^{+,*} )}(M,T)\simeq \Hom_{}^{+}(M,T), \quad  \Hom_{\sd^b(\G^* )}(M,T)\simeq \Hom^{}(M,T).
   \end{align*}
 \end{proposition}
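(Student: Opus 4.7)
My plan is to first reduce to the case of single objects $M=(D,\Lambda)$ and $T=(D',\Lambda')$ in $\G^+$ (resp.\ $\G$); the passage to bounded complexes follows from the standard bi-complex/total-complex formalism. For single objects, I would realize $\Hom^+(M,T)$ (resp.\ $\Hom(M,T)$) as a homotopy fiber arising from a ``gluing'' description of $\G^+$ (resp.\ $\G$) as a pullback of exact categories, and then substitute the explicit derived-$\Hom$ formulas already established for the factor categories earlier in the section.

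Concretely, $\G^+$ sits in a pullback square of exact categories
$$\xymatrix@C=1.6cm{\G^+\ar[r]\ar[d] & \Mod_{\B^+}(\phi)\ar[d]^{-\otimes_{\B^+}\B^+_{\dr}}\\ \Fmod_{\B^+_{\dr}}\ar[r] & \Mod_{\B^+_{\dr}}}$$
and similarly for $\G$ with $\B_{\dr}$ in place of $\B^+_{\dr}$ on the right column, the horizontal arrows being given by forgetting the lattice. By the very definition of the exact structure on $\G^+$ (a sequence is strict exact if and only if each forgetful functor sends it to a strict exact sequence), this pullback description yields a distinguished triangle
$$\Hom_{\sd^b(\G^+)}(M,T)\to \Hom_{\sd^b_{\B^+}(\phi)}(D,D')\oplus \Hom_{\sd^b(\Fmod_{\B^+_{\dr}})}((\Lambda,D_{\B^+_{\dr}}),(\Lambda',D'_{\B^+_{\dr}}))\xrightarrow{\can-\can}\Hom_{\sd^b_{\B^+_{\dr}}}(D_{\B^+_{\dr}},D'_{\B^+_{\dr}}).$$
Into this I would plug in the identifications recorded just before the statement: $\Hom_{\sd^b_{\B^+}(\phi)}(D,D')\simeq\Hom_{\B^+,\phi}(D,D')$, which is already the two-term complex given by the cone of $\delta$, together with $\Hom_{\sd^b(\Fmod_{\B^+_{\dr}})}$ and $\Hom_{\sd^b_{\B^+_{\dr}}}$ concentrated in degree zero, since extensions in $\Fmod_{\B^+_{\dr}}$ and $\Mod_{\B^+_{\dr}}$ split by elementary divisors over the PID $\B^+_{\dr}$. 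The homotopy fiber of the resulting map is then by definition $\Hom^+(M,T)$; the $\G$-case is identical after swapping $\B^+_{\dr}$ for $\B_{\dr}$ in the right column, and the admissible variants ($*=\ad$) are obtained by restricting to full subcategories whose Yoneda extensions agree with those in the ambient ones.

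The main obstacle will be the justification of the pullback-distinguished triangle in the quasi-abelian setting, where injective or projective resolutions are not available. I would prove it by direct inspection of Yoneda extensions: for $H^0$, the identification of $\Hom_{\G^+}(M,T)$ with the equalizer of the two Hom spaces is immediate. For $H^1$, one checks that an extension $0\to T\to E\to M\to 0$ in $\G^+$ is equivalent to a pair of compatible extensions, one in $\Mod_{\B^+}(\phi)$ and one in $\Fmod_{\B^+_{\dr}}$, together with an isomorphism of their base changes to $\B^+_{\dr}$ compatible with the given data, which is exactly $H^1$ of the homotopy fiber. Combined with the vanishing of $\Ext^{\geq 1}$ in $\Fmod_{\B^+_{\dr}}$ and $\Mod_{\B^+_{\dr}}$ and of $\Ext^{\geq 2}$ in $\Mod_{\B^+}(\phi)$ (since $\Hom_{\B^+,\phi}$ is already a two-term complex), this forces the quasi-isomorphism in all degrees.
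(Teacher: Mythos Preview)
Your approach is essentially the paper's, which defers to \cite[Prop.~2.7]{DN}: realize $\G^{+}$ (resp.\ $\G$) as the pullback of the three factor exact categories, use that each factor has split exact structure so that its derived Hom is the naive one, and identify the resulting homotopy fiber with $\Hom^+$ (resp.\ $\Hom$). Your degree-by-degree Yoneda check on objects is exactly what underlies that argument.

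The one point you leave unjustified is the admissible case. You assert that the admissible variants are handled by ``restricting to full subcategories whose Yoneda extensions agree with those in the ambient ones,'' but this agreement requires that admissible filtered $\phi$-modules be closed under extensions inside $\G^+$ (resp.\ $\G$), which is not automatic for an arbitrary full subcategory. The paper supplies precisely this missing ingredient: under the equivalence with modifications, admissibility means the modified bundle is semistable of slope zero, and semistable slope-zero bundles form an extension-closed subcategory of $\Bun_{X_{\ff}}$. You should make this explicit.
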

 \begin{proof}
 Proof is analogous to the one of Proposition 2.7 in \cite{DN}: note that $\Cone(M\stackrel{\id}{\to} M)$, for $M\in \sd^b(\G^{+} )$, $M\in \sd^b(\G )$,   is acyclic and that the category of semistable vector bundles of slope zero is closed under extensions (in the category of vector bundles).
 \end{proof}
 
 Let $\un:=(\B^+,\B^+_{\dr})$ be the unit filtered $\phi$-module.  For $M=(D,\Lambda)$, $M\in\G^{+} $ and 
 $M\in\G $  we set $H^*_{+}(\ovk,M):=H^*\R\Hom^{+}(\un,M)$ and $H^*(\ovk,M):=H^*\R\Hom(\un,M)$, respectively. We have 
 \begin{equation}
 \label{form}
 H^i_{+}(\ovk,M)=
  \begin{cases}
 D^{\phi=1}\cap \Lambda, & i=0,\\
   D\otimes_{\B^+}\B^+_{\dr}/(\Lambda+D^{\phi=1}) & i=1,\\
  0 & i\geq 2.
  \end{cases}\quad H^i(\ovk,M) =
   \begin{cases}
 D^{\phi=1}\cap \Lambda_M & i=0,\\
  D\otimes_{\B^+}\B_{\dr}/(\Lambda+ D^{\phi=1}) & i=1,\\
  0   & i\geq 2.
  \end{cases}
 \end{equation}
  Moreover, the complex 
  $$\R\Hom^{+}(\un,M)  := (D^{\phi=1}\to D\otimes_{\B^+}\B^+_{\dr}/\Lambda)
    $$
  can be lifted canonically to a complex of finite dimensional Banach-Colmez spaces. To see that, set, for a sympathetic algebra $A$, 
  \begin{align}
  \label{bcspaces}
  D(A):=D\otimes_{\B^+}{\mathbb B}^+(A),\quad \Lambda(A):=\Lambda\otimes _{\B^+_{\dr}}{\mathbb B}^+_{\dr}(A),\\
  {\mathbb R}\Hom^{+}(\un,M)(A)  := (D(A)^{\phi=1}\to D\otimes_{\B^+}{\mathbb B}^+_{\dr}(A)/\Lambda(A)).\notag
    \end{align}
The bottom complex  is clearly a complex of Banach-Colmez spaces such that $  {\mathbb R}\Hom^{+}(\un,M)(C)={R}\Hom^{+}(\un,M)$. The fact that 
 this is a complex of finite dimensional Banach-Colmez spaces follows from section (\ref{kwak2}).
  
 \begin{remark}
   Let $(\se_1,\se_2,u)$ be an effective modification and let $M$ be the associated effective filtered $\phi$-module. We have an exact sequence
   of sheaves on $X_{\ff}$
   $$
   0\to \se_1\stackrel{u}{\to} \se_2\to i_{\infty*}(\widehat{\se}_{2,\infty}/u(\widehat{\se}_{1,\infty}))\to 0
   $$
   We get from it  the long exact sequence of cohomology groups
   \begin{align*}
   0\to  & H^0(X_{\ff},\se_1)\to H^0(X_{\ff},\se_2)\to \widehat{\se}_{2,\infty}/u(\widehat{\se}_{1,\infty})\\
    & \to H^1(X_{\ff}, \se_1)\to H^1(X_{\ff},\se_2)\to 0
    \end{align*}
   Since $H^0(X_{\ff},\se(D))=D^{\phi=1}$, by (\ref{form}), we have that 
   \begin{align*}
  H^0_{+}(\ovk,M) & =H^0(X_{\ff},\se_1),\\
H^1_{+}(\ovk,M) & =\ker(H^1(X_{\ff},\se_1)\to H^1(X_{\ff},\se_2)).
\end{align*} 
In particular, if $(\se_1,\se_2,u)$ is effective and admissible then $H^1_{+}(\ovk,M)=0$ because $H^1(X_{\ff},\se_1)=0$.
\end{remark}
 \begin{remark}
 We note that if $M=\un$, then $H^1_{+}(\ovk,M)=0$ and $H^1(\ovk,M)=\B_{\dr}/\B_{\dr}^+$; hence effective filtered $\phi$-modules are not closed under extensions in the category of filtered $\phi$-modules. On the other hand, admissible filtered $\phi$-modules are closed under extensions (because semistable vector bundles of slope zero are closed under extensions in the category of vector bundles).
  \end{remark}
\subsection{Extensions of modifications}Extensions of modifications of vector bundles can be computed in an analogous way; we  will just list the results. 
Let $M=(D,\Lambda)$, $T=(D^{\prime},\Lambda^{\prime})$ be two complexes in $C^b(\Modif^+)$, $C^b(\Modif)$, respectively, with $D, D^{\prime}$ --  complexes of $\phi$-modules over $\B$.  Define the respective complexes   $\Hom^{+}(M,T)$, $\Hom^{}(M,T)$ as the following homotopy fibers
\begin{align*}
 \Hom^{+}(M,T) & :=[\Hom_{\B,\phi}(D,D^{\prime})\oplus  \Hom_{F\Mod_{\B^+_{\dr}}}((\Lambda,D_{\B^+_{\dr}}),(\Lambda^{\prime},D^{\prime}_{\B^+_{\dr}}))\verylomapr{\can-\can}
 \Hom_{\B^+_{\dr}}(D_{\B^+_{\dr}},D^{\prime}_{\B^+_{\dr}} )],\\
  \Hom^{}(M,T) & :=[\Hom_{\B,\phi}(D,D^{\prime})\oplus  \Hom_{F\Mod_{\B_{\dr}}}((\Lambda,D_{\B_{\dr}}),(\Lambda^{\prime},D^{\prime}_{\B_{\dr}}))\verylomapr{\can-\can}
 \Hom_{\B_{\dr}}(D_{\B_{\dr}},D^{\prime}_{\B_{\dr}} )]
\end{align*}
Complexes $\Hom^{+}$, $\Hom^{}$ compose naturally.  
\begin{proposition}
 We have ($*=\Vtextvisiblespace[0.2cm], \ad$)
 \begin{align*}
 \Hom_{\sd^b(\Modif^{+,*} )}(M,T)\simeq \Hom_{}^{+}(M,T), \quad  \Hom_{\sd^b(\Modif^*)}(M,T)\simeq \Hom^{}(M,T).
   \end{align*}
 \end{proposition}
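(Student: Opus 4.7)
The plan is to mimic the proof of Proposition \ref{comp1} essentially verbatim, with $\B^+$ systematically replaced by $\B$ and $\phi$-modules over $\B^+$ replaced by $\phi$-modules over $\B$. Under the equivalence $\Modif \simeq \G$ after the base change $\B^+\subset\B$ (combining the equivalence $\Mod_{\B^+}(\phi)\stackrel{\sim}{\to}\Mod_{\B}(\phi)$ of (\ref{exact1}) with the description of modifications as pairs $(\se,\Lambda)$), an object of $\Modif$ corresponds to a pair $(D,\Lambda)$ with $D\in\Mod_{\B}(\phi)$ and $\Lambda\subset D\otimes_{\B}\B_{\dr}$ a $\B^+_{\dr}$-lattice; the effective condition becomes $\Lambda\subset D\otimes_{\B}\B^+_{\dr}$, and admissibility becomes $D\simeq\B^{\oplus n}$, equivalently the associated vector bundle is semistable of slope zero. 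The homotopy-fiber complexes $\Hom^{+}(M,T)$ and $\Hom(M,T)$ in the statement are then the obvious analogs of the ones appearing in Proposition \ref{comp1}.

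The argument proceeds in three steps. First, one writes down the natural map $\Hom_{\sd^b(\Modif^{*})}(M,T)\to \Hom^{\bullet}(M,T)$, sending a chain map of modifications to its two components together with the tautological compatibility datum witnessing equality on $X_{\ff}\setminus\{\infty\}$; because of the description of $\Hom_{\sd^b_{\B}(\phi)}$ as a two-term cone computed in the previous subsection, this map is well-defined at the dg level. Second, one invokes the key homotopical input used in Proposition \ref{comp1}: for every $M\in C^b(\Modif^{*})$ the mapping cone $\Cone(M\xrightarrow{\id}M)$ is strictly acyclic, so that both sides of the claimed isomorphism transform quasi-isomorphisms of modifications into quasi-isomorphisms of complexes, reducing the computation of derived Hom to the homotopy category. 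Third, one handles the case distinction: for $*=\Vtextvisiblespace[0.2cm]$ the argument is immediate, whereas for $*=\ad$ one must verify that $\Modif^{\ad}$ is closed under extensions in $\Modif$, so that the homotopy fiber genuinely computes Ext groups computed in the subcategory; this closure holds because admissible modifications correspond to triples $(\se_1,\se_2,u)$ with $\se_1$ semistable of slope zero, and such vector bundles form an extension-closed subcategory of $\Bun_{X_{\ff}}$, as is already used in the global sections equivalence of Section~2.3.

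The main obstacle I anticipate is careful bookkeeping around the quasi-abelian exact structure on $\Modif$ and $\Modif^{+}$. These carry the natural exact structures inherited from $\Bun_{X_{\ff}}$, not the abelian kernel-cokernel structures of the ambient category of pairs, so one must work with strict morphisms throughout and check that the homotopy-fiber construction respects strictness and commutes with passage to the derived dg category in the appropriate sense; recall that for the natural exact structure, $\Ext^1$ over $\B$ is richer than that over $\B^+$ (as illustrated by the remark following (\ref{exact1})), and one must not accidentally collapse this. Once this bookkeeping is in place -- following the template of \cite[Prop.~2.7]{DN} and the already-proved Proposition \ref{comp1} -- the remaining verification is formal and reduces to elementary properties of cones and mapping fibers.
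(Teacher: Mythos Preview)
Your proposal is correct and is exactly the paper's approach: the paper gives no separate proof for this proposition, simply noting that extensions of modifications are computed in an analogous way to Proposition~\ref{comp1} and then listing the results. Your concern about exact-structure bookkeeping is largely unnecessary, since the key formula $\Hom_{\sd^b_{\B}(\phi)}(D_1,D_2)\simeq\Hom_{\B,\phi}(D_1,D_2)$ (which absorbs the nontrivial $\Ext^1$ over $\B$) is already established in Section~3.1, and the lattice-side ingredients $\Fmod_{\B^+_{\dr}}$, $\Fmod_{\B_{\dr}}$, $\Mod_{\B^+_{\dr}}$, $\Mod_{\B_{\dr}}$ all have split exact structure exactly as in the $\B^+$ case.
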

 Let $\un:=(\B,\B^+_{\dr})$ be the unit  modification.  For $M=(D,\Lambda)$, $M\in\Modif^+$ and 
 $M\in\Modif$,  we set $H^*_{+}(\Modif,M):=H^*\R\Hom^{+}(\un,M)$ and $H^*(\Modif,M):=H^*\R\Hom(\un,M)$, respectively. We have the following long exact sequence ($*=+, \Vtextvisiblespace[0.2cm]$)
\begin{align*}
0\to H^0_*(\Modif,M)\to D^{\phi=1}\to (D\otimes\B^*_{\dr})/\Lambda\to H^1_*(\Modif,M)\to D/(1-\phi)D\to 0
\end{align*}
Moreover, for $i\geq 2$, $H^i_*(\Modif,M)=0$.

  We conclude that, for an effective filtered $\phi$-module  $M=(D,\Lambda)$ over $\ovk$, we have $H^i_+(\ovk,M)\hookrightarrow H^i_+(\sm,M_{\B}),$ where $M_{\B}:=(D\otimes_{\B^+}\B,\Lambda)$. More specifically, we have
\begin{enumerate}
\item $H^0_+(\ovk,M)=H^0_+(\sm,M_{\B})$, 
\item $H^1_+(\ovk,M)\hookrightarrow H^1_+(\sm,M_{\B})$ with cokernel $H^1(X_{\ff},\se(D))=D/(1-\phi)D$,
\item  $H^i_+(\ovk,M)= H^i_+(\sm,M_{\B})=0$, for $i\geq 2$. 
\end{enumerate}

  The following proposition shows that cohomology of effective modifications  recovers only the cohomology of the "smaller" modified vector bundle on the Fargues-Fontaine curve.
\begin{proposition}
 Let $T=(\se_1,\se_2,u)$ be an effective modification. We have a canonical quasi-isomorphism
$$ \R\Hom_{\Modif^+}(\un,T)\simeq \R\Gamma(X_{\ff},\se_1).
$$
\end{proposition}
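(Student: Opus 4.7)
The strategy is to identify both sides by unpacking them as explicit two-term complexes and comparing cohomology via a fundamental exact sequence of $\phi$-modules. Writing $T=(D',\Lambda')$ for the filtered $\phi$-module associated to the effective modification (so $\se_2=\se(D')$ and $\Lambda'\subset D'\otimes_{\B}\B^+_{\dr}$), Proposition \ref{comp1} applied with $M=\un=(\B,\B^+_{\dr})$ exhibits $\R\Hom_{\Modif^+}(\un,T)$ as a homotopy fibre whose cohomology, via the long exact sequence just above, satisfies $H^0=(D')^{\phi=1}\cap\Lambda'$ and
$$0\to (D'\otimes\B^+_{\dr})/((D')^{\phi=1}+\Lambda')\to H^1\to D'/(1-\phi)D'\to 0.$$
For the right-hand side, $\se_1$ corresponds to the $\B$-pair $(M,\Lambda')$, where $M=\Gamma(X_{\ff}\setminus\{\infty\},\se_1)=\Gamma(X_{\ff}\setminus\{\infty\},\se_2)$ (the two bundles agree on $X_{\ff}\setminus\{\infty\}$), so the \v{C}ech formula from the Example gives $\R\Gamma(X_{\ff},\se_1)=[M\oplus\Lambda'\to D'\otimes_{\B}\B_{\dr}]$ with differential $(m,w)\mapsto m-w$.

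I would then construct a canonical morphism between these two complexes, induced conceptually by the forgetful functor $\Modif^+\to\Bun_{X_{\ff}}$, $(\se_1,\se_2,u)\mapsto\se_1$: a morphism $\un\to T$ in $\Modif^+$ is determined by its $\se_1$-component, and an extension of $\un$ by $T$ in $\Modif^+$ is recovered from an extension of $\so_{X_{\ff}}$ by $\se_1$ in $\Bun_{X_{\ff}}$ by pushout along $\se_1\hookrightarrow\se_2$. For the $H^0$ match, one uses $M\cap(D'\otimes\B^+_{\dr})=(D')^{\phi=1}$ (the \v{C}ech expression of $H^0(\se_2)=(D')^{\phi=1}$) together with $\Lambda'\subset D'\otimes\B^+_{\dr}$ to conclude that $M\cap\Lambda'=(D')^{\phi=1}\cap\Lambda'$. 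For the $H^1$ match, applying the snake lemma to the short exact sequence
$$0\to D'\otimes\B^+_{\dr}\to D'\otimes\B_{\dr}\to D'\otimes(\B_{\dr}/\B^+_{\dr})\to 0,$$
compared via the quotients $(D'\otimes\B^+_{\dr})/((D')^{\phi=1}+\Lambda')$ and $(D'\otimes\B_{\dr})/(M+\Lambda')$, produces
$$0\to (D'\otimes\B^+_{\dr})/((D')^{\phi=1}+\Lambda')\to H^1(\R\Gamma(X_{\ff},\se_1))\to D'\otimes(\B_{\dr}/\B^+_{\dr})/\im(M)\to 0,$$
and the fundamental exact sequence
$$0\to (D')^{\phi=1}\to M\to D'\otimes(\B_{\dr}/\B^+_{\dr})\to D'/(1-\phi)D'\to 0$$
identifies the rightmost term with $D'/(1-\phi)D'$, matching the extension for $H^1$ above.

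The main obstacle is verifying the fundamental exact sequence in full generality. It is precisely the four-term cohomology sequence extracted from the \v{C}ech presentation of $\R\Gamma(X_{\ff},\se_2)$ coupled with the identifications $H^0(\se_2)=(D')^{\phi=1}$ and $H^1(\se_2)=D'/(1-\phi)D'$ (the latter interpretation coming from the natural exact structure on $\Mod_{\B}(\phi)$). By the Fargues-Fontaine classification (\ref{classB}), checking it reduces to the stable $\phi$-modules $\B(-\lambda)$; for integer slopes this follows from the explicit computation of $\R\Gamma(X_{\ff},\so(d))$ in the Example (ultimately resting on the classical identities $\B_{\dr}=\B^+_{\dr}+\B_e$ and $\B^+_{\dr}\cap\B_e=\Q_p$), and for rational slopes $d/h$ after the base change $X_{\ff,h}\to X_{\ff}$ to $\Q_{p^h}$.
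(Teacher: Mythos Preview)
Your approach is correct in outline but takes a substantially longer route than the paper's. The paper does not pass through the filtered $\phi$-module description $(D',\Lambda')$ at all. Instead it writes down an alternative formula for $\R\Hom$ in $\Modif^+$, this time directly in terms of the vector bundle data: for effective modifications $T=(\se_1,\se_2,u)$ and $T'=(\se_1',\se_2',u')$,
\[
\R\Hom_{\Modif^+}(T,T')=\bigl[\R\Hom_{\Bun_{X_{\ff}}}(\se_1,\se_1')\oplus\R\Hom_{\Bun_{X_{\ff}}}(\se_2,\se_2')\xrightarrow{\,u'_*-u^*\,}\R\Hom_{\Bun_{X_{\ff}}}(\se_1,\se_2')\bigr].
\]
Specialising to $\un=(\so,\so,\id)$ in the first variable, the map on the second summand is the identity $\R\Hom_{\Bun_{X_{\ff}}}(\so,\se_2)\to\R\Hom_{\Bun_{X_{\ff}}}(\so,\se_2)$, so the homotopy fibre collapses to $\R\Hom_{\Bun_{X_{\ff}}}(\so,\se_1)=\R\Gamma(X_{\ff},\se_1)$. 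That is the whole proof; it never leaves the level of complexes.

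Your argument instead compares the two sides degree by degree in cohomology. This is viable, but notice what it costs you: (i) the map you build ``conceptually'' via the forgetful functor is described only on $\Hom$ and $\Ext^1$ via Yoneda, and you still owe a check that this map of complexes induces a \emph{morphism} of the two short exact sequences you display for $H^1$ (matching graded pieces does not by itself give an isomorphism of the middle terms); (ii) the identification $H^1(X_{\ff},\se_2)\simeq D'/(1-\phi)D'$, which you package as the four-term fundamental sequence, is precisely the special case of the proposition for the trivial modification $(\se_2,\se_2,\id)$, so you are in effect proving the result first for trivial modifications via the classification theorem and then bootstrapping. The paper's bundle-theoretic formula sidesteps both issues and avoids invoking the Fargues--Fontaine classification altogether.
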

\begin{proof}
Let $T=(\se_1,\se_2,u)$ and $T^{\prime}=(\se_1^{\prime},\se_2^{\prime},u^{\prime})$ be two of complexes of effective modifications. We have
$$
\R\Hom_{\sm^+}(T,T^{\prime})=[\R\Hom_{\Bun_{X_{\ff}}}(\se_1,\se_1^{\prime})\oplus\R\Hom_{\Bun_{X_{\ff}}}(\se_2,\se_2^{\prime})
\lomapr{u^{\prime}_*-u^*} \R\Hom_{\Bun_{X_{\ff}}}(\se_1,\se_2^{\prime})]
$$
If we apply this to the unit modification $\un$ and $T=(\se_1,\se_2,u)$ we find that 
\begin{align*}
\R\Hom_{\sm^+}(\un,T) & =[\R\Hom_{\Bun_{X_{\ff}}}(\so,\se_1)\oplus\R\Hom_{\Bun_{X_{\ff}}}(\so,\se_2)\lomapr{u_*-\id} \R\Hom_{\Bun_{X_{\ff}}}(\so,\se_2)]\\
 & =\R\Hom_{\Bun_{X_{\ff}}}(\so,\se_1)=\R\Gamma(X_{\ff},\se_1),
\end{align*}
as wanted.
\end{proof}
\section{Geometric syntomic cohomology}
We will recall the definition of geometric syntomic cohomology defined in \cite{NN} and list its basic properties. 
\subsection{Definitions}\label{kwak?}
   For $X\in \mathcal{V}ar_{\ovk}$, we have the rational crystalline cohomology  $\R\Gamma_{\crr}(X)$ defined in \cite{BE2} using $h$-topology. It is a filtered dg  perfect $\B^+_{\crr}$-algebra equipped with the Frobenius action $\phi$. The Galois group $G_K$ acts on ${\mathcal V}ar_{\ovk}$ and it acts on $X\mapsto \R\Gamma_{\crr}(X)$ by transport of structure. If $X$ is defined over $K$ then $G_K$ acts naturally on $\R\Gamma_{\crr}(X)$. 
   
   For $r\geq 0$, one defines \cite{NN} geometric syntomic cohomology of $X$ as the $r$'th filtered Frobenius eigenspace of crystalline cohomology
   $$
   \R\Gamma_{\synt}(X,r):=[F^r\R\Gamma_{\crr}(X)\lomapr{1-\phi_r}\R\Gamma_{\crr}(X)].
   $$ In the case when $X$ is the generic fiber of a proper semistable scheme $\sx$ over $\so_K$, this agrees with the (continuous) logarithmic syntomic cohomology of Fontaine-Messing-Kato.

   The above definition is convenient to study period maps but for computations a different definition is more convenient. We will explain it now. 
   There is a natural map $\gamma: \R\Gamma_{\crr}(X)\to \R\Gamma_{\dr}(X)\otimes_{\ovk}\B^+_{\dr}$. Here $\R\Gamma_{\dr}(X)$ is the Deligne's de Rham cohomology.
   It is a filtered perfect complex of $\ovk$-vector spaces. We equip it with the Hodge-Deligne filtration. It follows from the degeneration of the Hodge-de Rham spectral sequence that the differentials in $\R\Gamma_{\dr}(X)$ are strict for the filtration (cf. \cite[Prop. 8.3.1]{HMF}). 
   
   The complex $\R\Gamma_{\dr}(X)\otimes_{\ovk}\B^+_{\dr}$ is a perfect complex of free $\B^+_{\dr}$-modules. It is  filtered by a perfect complex of $\B^+_{\dr}$-lattices. The cohomology of $F^i(\R\Gamma_{\dr}(X)\otimes_{\ovk}\B^+_{\dr})$, $i\geq 0$, is torsion-free:
   this follows from the degeneration of the Hodge-de Rham spectral sequence. Moreover, this implies that this complex is strict: kernels of differentials are $t$-saturated because the complex is perfect, images are $t$-saturated in the kernels because cohomology is torsion-free; this implies that the images are $t$-saturated in the ambient modules, as wanted.

   For $r\geq 0$, the geometric syntomic cohomology of $X$ can be defined in the following way.
 \begin{align*}
\R\Gamma_{\synt}(X,r) & :=
  \left[\xymatrix@C=36pt{\R\Gamma_{\crr}(X)\ar[rr]^-{(1-\phi_r,\gamma)} & & \R\Gamma_{\crr}(X)\oplus (\R\Gamma_{\dr}(X)\otimes_{\ovk}\B^+_{\dr}) /F^r}\right]\\
 &   \stackrel{\sim}{\leftarrow}
   \left[\begin{aligned}\xymatrix@C=50pt{\R\Gamma_{\hk}(X)\otimes_{F^{\nr}}\B_{\st}^+\ar[r]^-{(1-\phi_r,\iota_{\dr}\otimes\iota)}\ar[d]^{N}  & \R\Gamma_{\hk}(X)\otimes_{F^{\nr}}\B_{\st}^+\oplus (\R\Gamma_{\dr}(X)\otimes_{\ovk}\B^+_{\dr}) /F^r\ar[d]^{(N,0)}\\
\R\Gamma_{\hk}(X)\otimes_{F^{\nr}}\B_{\st}^+\ar[r]^{1-\phi_{r-1}}  & \R\Gamma_{\hk}(X)\otimes_{F^{\nr}}\B_{\st}^+}\end{aligned}\right]
 \end{align*}
 We set $\phi_i:=\phi/p^i$. Here $\R\Gamma_{\hk}(X)$ is the Beilinson's Hyodo-Kato cohomology of $X$ \cite{BE2}. It is a complex of finite rank $(\phi,N)$-modules over $F^{\nr}$. It comes equipped with the Hyodo-Kato quasi-isomorphism $$\iota_{\dr}:\R\Gamma_{\hk}(X)\otimes_{F^{\nr}}\ovk\simeq \R\Gamma_{\dr}(X).$$ The second quasi-isomorphism in the above diagram
 uses  the quasi-isomorphism $$
 \iota_{\crr}: \R\Gamma_{\hk}(X)\otimes_{F^{\nr}}\B_{\st}^+\stackrel{\sim}{\to}\R\Gamma_{\crr}(X)\otimes_{\B^+_{\crr}}\B^+_{\st}
 $$ 
 that is compatible with the action of  $\phi$ and $N$. We will write
 $\R\Gamma_{\hk}(X)^{\tau}_{\B^+_{\crr}}:=(\R\Gamma_{\hk}(X)\otimes_{F^{\nr}}\B_{\st}^+)^{N=0}$. We have a  trivialization
 \begin{align*}
 \R\Gamma_{\hk}(X)\otimes_{F^{\nr}}\B_{\crr}^+ & \stackrel{\sim}{\to} \R\Gamma_{\hk}(X)^{\tau}_{\B^+_{\crr}}=(\R\Gamma_{\hk}(X)\otimes_{F^{\nr}}\B_{\st}^+)^{N=0},\\
&  x \mapsto \exp(N(x)\log([\tilde{p}])),
 \end{align*}
 where $\tilde{p}$ is a sequence of $p^n$'th roots of $p$.
  This yields a quasi-isomorphism $\R\Gamma_{\hk}(X)^{\tau}_{\B^+_{\crr}}\simeq\R\Gamma_{\crr}(X)$. Both maps are compatible with Frobenius and monodromy.
   
   We can rewrite the above in the following form
   \begin{align*}
\R\Gamma_{\synt}(X,r)
 & \stackrel{\sim}{\to}
  [\xymatrix{\R\Gamma_{\crr}(X)^{\phi=p^r}\ar[rr]^-{\gamma} & &  (\R\Gamma_{\dr}(X)\otimes_{\ovk}\B^+_{\dr}) /F^r}]\notag\\
&   \stackrel{\sim}{\leftarrow}
   [\xymatrix{(\R\Gamma_{\hk}(X)\otimes_{F^{\nr}}\B_{\st}^+)^{\phi=p^r,N=0}\ar[r]^-{\iota_{\dr}\otimes\iota}  & (\R\Gamma_{\dr}(X)\otimes_{\ovk}\B^+_{\dr}) /F^r}],
 \end{align*}
 where we set $$\R\Gamma_{\crr}(X)^{\phi=p^r}:=[\R\Gamma_{\crr}(X)\lomapr{p^r-\phi} \R\Gamma_{\crr}(X)]$$
and
 $$
 (\R\Gamma_{\hk}(X)\otimes_{F^{\nr}}\B_{\st}^+)^{\phi=p^r,N=0}:=\left[\begin{aligned}\xymatrix{
 \R\Gamma_{\hk}(X)\otimes_{F^{\nr}}\B_{\st}^+  \ar[r]^{1-\phi_r}\ar[d]^N    & \R\Gamma_{\hk}(X)\otimes_{F^{\nr}}\B_{\st}^+\ar[d]^N\\ 
     \R\Gamma_{\hk}(X)\otimes_{F^{\nr}}\B_{\st}^+ \ar[r]^{1-\phi_{r-1}}     & \R\Gamma_{\hk}(X)\otimes_{F^{\nr}}\B_{\st}^+  }
    \end{aligned}\right]
    $$
    Alternatively, by formula (\ref{formula-passage}),  we can change the period ring $\B^+_{\st}$ to $\B^+_{\log}$ to obtain
     \begin{align*}
\R\Gamma_{\synt}(X,r)
  \stackrel{\sim}{\to}
   [\xymatrix{(\R\Gamma_{\hk}(X)\otimes_{F^{\nr}}\B_{\log}^+)^{\phi=p^r,N=0}\ar[r]^-{\iota_{\dr}\otimes\iota}  & (\R\Gamma_{\dr}(X)\otimes_{\ovk}\B^+_{\dr}) /F^r}].
 \end{align*}
 We will write
 $\R\Gamma_{\hk}(X)^{\tau}_{\B^+}:=(\R\Gamma_{\hk}(X)\otimes_{F^{\nr}}\B_{\log}^+)^{N=0}$; we have a canonical trivialization
 $\R\Gamma_{\hk}(X)^{\tau}_{\B^+}\simeq \R\Gamma_{\hk}(X)\otimes_{F^{\nr}}\B_{}^+$ (compatible with Frobenius and monodromy).  With this notation, we have
  \begin{align}
  \label{analog1}
\R\Gamma_{\synt}(X,r)
  \stackrel{\sim}{\to}
   [\xymatrix{\R\Gamma_{\hk}(X)_{\B^+}^{\tau,\phi=p^r}\ar[r] & (\R\Gamma_{\dr}(X)\otimes_{\ovk}\B^+_{\dr}) /F^r}].
 \end{align}
   \subsection{Basic properties}We will now list basic properties of geometric syntomic cohomology. 
   \subsubsection{Syntomic period maps}Let $X\in {\mathcal V}ar_{\ovk}$.
Recall that Beilinson \cite{Be1}, \cite{BE2} defined comparison quasi-isomorphisms
\begin{align*}
& \rho_{\crr}:  \R\Gamma_{\crr}(X)\otimes_{\B^+_{\crr}}\B_{\crr}\simeq\R\Gamma_{\eet}(X,\Qp)\otimes_{\Qp}\B_{\crr},\quad & \rho_{\hk}:    \R\Gamma_{\hk}(X)\otimes_{F^{\nr}}\B_{\st}\simeq\R\Gamma_{\eet}(X,\Qp)\otimes_{\Qp}\B_{\st},\\
& \rho_{\dr}:   \R\Gamma_{\dr}(X)\otimes_{\ovk}\B_{\dr}\simeq\R\Gamma_{\eet}(X,\Qp)\otimes_{\Qp}\B_{\dr}
\end{align*}
that are compatible with the extra structures and with each other.
For $r\geq 0$, they give us the syntomic  period map \cite{NN} 
$$\rho_{\synt}: \R\Gamma_{\synt}(X,r)\to \R\Gamma_{\eet}(X,\Qp(r))$$
defined as follows
\begin{align}
\label{quasi}
\R\Gamma_{\synt}(X,r) & \simeq [(\R\Gamma_{\hk}(X)\otimes_{F^{\nr}}\B^+_{\st})^{\phi=p^r,N=0}
\verylomapr{\iota_{\dr}\otimes\iota}(\R\Gamma_{\dr}(X)\otimes_{\ovk}\B^+_{\dr})/F^r]\\
 & \to  [(\R\Gamma_{\hk}(X)\otimes_{F^{\nr}}\B_{\st})^{\phi=p^r,N=0}
\verylomapr{\iota_{\dr}\otimes\iota}(\R\Gamma_{\dr}(X)\otimes_{\ovk}\B_{\dr})/F^r]\notag\\
 & \simeq [\R\Gamma_{\eet}(X,\Qp)\otimes_{\Qp}\B_{\crr}^{\phi=p^r}
\verylomapr{\iota_{\dr}\otimes\iota}\R\Gamma_{\eet}(X,\Qp)\otimes_{\Qp}\B_{\dr}/F^r]\notag\\
 & \stackrel{\sim}{\leftarrow} \R\Gamma_{\eet}(X,\Qp(r))\notag.
\end{align}
The last quasi-isomorphism follows from the fundamental exact sequence
$$
0\to \Qp(r)\to \B_{\crr}^{\phi=p^r}\to\B_{\dr}/F^r\to 0
$$
In a  stable range, the syntomic period map is a quasi-isomorphism.
   \begin{proposition}(\cite[Prop. 4.6]{NN})
   \label{NN}
   The syntomic period morphism induces a quasi-isomorphism
   $$\rho_{\synt}: \tau_{\leq r}\R\Gamma_{\synt}(X,r)\stackrel{\sim}{\to}\tau_{\leq r}\R\Gamma_{\eet}(X,\Qp(r)).
      $$
   \end{proposition}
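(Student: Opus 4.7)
The plan is to compute the cofiber of $\rho_{\synt}$ and show it is concentrated in cohomological degrees $>r$. By the construction in (\ref{quasi}), both $\R\Gamma_{\synt}(X,r)$ and $\R\Gamma_{\eet}(X,\Qp(r))$ are realized as homotopy fibers of essentially the same map, differing only in the period rings used: the syntomic side uses $\B^+_{\st}$ and $\B^+_{\dr}$, while the \'etale side uses $\B_{\st}$ and $\B_{\dr}$ (the latter identification via the fundamental exact sequence $0\to\Qp(r)\to\B^{\phi=p^r}_{\crr}\to\B_{\dr}/F^r\to 0$ combined with the comparison isomorphisms $\rho_{\hk}$, $\rho_{\dr}$). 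Thus $\rho_{\synt}$ is literally the natural map of fibers induced by the inclusions $\B^+_{\st}\hookrightarrow\B_{\st}$ and $\B^+_{\dr}\hookrightarrow\B_{\dr}$, and its cofiber sits in an exact triangle controlled by the cofibers on each side of the fiber square.

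I would then analyze these two cofibers separately. On the de Rham side, the cofiber of $(\R\Gamma_{\dr}(X)\otimes\B^+_{\dr})/F^r\to (\R\Gamma_{\dr}(X)\otimes\B_{\dr})/F^r$ is isomorphic to $\R\Gamma_{\dr}(X)\otimes(\B_{\dr}/\B^+_{\dr})$, using that $F^r(\B_{\dr}/\B^+_{\dr})=0$ for $r\geq 0$ since $F^r\B_{\dr}=t^r\B^+_{\dr}\subset\B^+_{\dr}$. This cofiber is built from graded pieces $H^i_{\dr}(X)\otimes t^{-j}C$ for $j\geq 1$, and the Hodge--Deligne filtration bounds on $H^i_{\dr}(X)$ force these pieces to contribute only to degrees $>r$. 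On the Hyodo--Kato side, the cofiber of $(\R\Gamma_{\hk}(X)\otimes\B^+_{\st})^{\phi=p^r,N=0}\to (\R\Gamma_{\hk}(X)\otimes\B_{\st})^{\phi=p^r,N=0}$ simplifies after using $\B_{\st}^{N=0}=\B_{\crr}$ and the slope decomposition of $\R\Gamma_{\hk}(X)$ to a sum of terms of the form $(\B_{\crr}/\B^+_{\crr})^{\phi=p^{r-s}}$, where $s$ ranges over the Newton slopes appearing in a given degree. Slope bounds on Hyodo--Kato cohomology again force these to land in degrees $>r$.

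The main obstacle is the simultaneous bookkeeping of the cohomological degree of $\R\Gamma_{\hk}(X)$ or $\R\Gamma_{\dr}(X)$, the $\phi$-slope or Hodge filtration level, and the shift contributed by the fiber/cofiber formation; the vanishing is a consequence of a weight--slope inequality that holds precisely in the stable range $i\leq r$. I would first establish the result in the proper semistable log-smooth case, where the relevant inequalities are classical (Hodge numbers of $H^i_{\dr}$ concentrated in $[0,i]$, Newton slopes of $H^i_{\hk}$ in $[0,i]$), and then deduce the general case by $h$-descent using the $h$-sheafification procedure that defines geometric syntomic cohomology, noting that the truncation functor $\tau_{\leq r}$ and the syntomic period map are both compatible with this descent.
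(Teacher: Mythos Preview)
The paper does not supply its own proof of this proposition; it is simply quoted from \cite[Prop.~4.6]{NN}. So there is nothing to compare your argument against in the present paper. That said, your outline contains a genuine error that would prevent it from going through as written.

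The flaw is in the claim that the two cofibers, analyzed \emph{separately}, are concentrated in cohomological degrees $>r$. Neither is. On the de Rham side, you correctly identify the cofiber (in degree $i$ with $i\leq r$, so that $F^{r+1}H^i_{\dr}(X)=0$) as $H^i_{\dr}(X)\otimes_{\ovk}(\B_{\dr}/\B^+_{\dr})$; but this is visibly nonzero already for $i=0$, and no Hodge filtration bound makes it vanish. Likewise on the Hyodo--Kato side: in degree $0$ the relevant term is $(\B_{\crr}/\B^+_{\crr})^{\phi=p^r}$, and this is nonzero for every $r\geq 0$ (indeed $(\B_{\crr})^{\phi=p^r}=t^r\B_e$ is strictly larger than $(\B^+_{\crr})^{\phi=p^r}$). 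So your separate vanishing claims both fail at the very first degree.

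What is actually true, and what the argument in \cite{NN} exploits, is that the map $\iota_{\dr}\otimes\iota$ induces an \emph{isomorphism} between these two nonzero cofibers in each degree $i\leq r$. Concretely, for $i\leq r$ the pair $(H^i_{\hk}(X),H^i_{\dr}(X))$ comes from an admissible filtered $(\phi,N,G_L)$-module with all Hodge weights and Frobenius slopes in $[0,i]\subset[0,r]$; under these hypotheses the ``plus'' fundamental sequence
\[
0\to V(r)\to (D\otimes_{F^{\nr}}\B^+_{\st})^{\phi=p^r,N=0}\lomapr{\iota_{\dr}\otimes\iota}(D_K\otimes_K\B^+_{\dr})/F^r\to 0
\]
is already exact (this is \cite[Cor.~2.10]{NN}, invoked elsewhere in the paper), and comparing it with the non-plus version shows that the two cofibers coincide. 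Your framework of comparing fibers is fine; you just need to replace ``each cofiber vanishes'' by ``the map between the cofibers is an isomorphism in the stable range,'' and justify the latter via the weight/slope bounds and the admissibility of the cohomology modules.
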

   \subsubsection{Homotopy property}
Syntomic cohomology has  homotopy invariance property. 
\begin{proposition}
\label{homotopy}
 Let $X\in {\mathcal V}ar_{\ovk}$ and let $f: {\mathbb A}^1_X \to X$ be the natural projection from the affine line over $X$ to $X$. Then, 
 for all $r\geq 0$, the pullback map 
$$f^*:\,\R\Gamma_{\synt}(X,r)\lomapr{\sim}\R\Gamma_{\synt}({\mathbb A}^1_{X},r)$$
is a quasi-isomorphism.
\end{proposition}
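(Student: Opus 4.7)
The plan is to exploit the description (\ref{analog1})
\begin{equation*}
\R\Gamma_{\synt}(X,r) \stackrel{\sim}{\to}
\bigl[\R\Gamma_{\hk}(X)_{\B^+}^{\tau,\phi=p^r} \to (\R\Gamma_{\dr}(X)\otimes_{\ovk}\B^+_{\dr})/F^r\bigr],
\end{equation*}
which exhibits geometric syntomic cohomology as a mapping fiber built functorially from the triple $(\R\Gamma_{\hk}(X),\,\R\Gamma_{\dr}(X),\,\iota_{\dr})$. Every operation in sight -- tensoring with the period rings $\B^+$ and $\B^+_{\dr}$, extracting the $\phi = p^r,\,N=0$ part, and quotienting by the Hodge filtration $F^r$ -- preserves quasi-isomorphisms when applied to a map that is strict for the filtration, so it suffices to verify that $f^*$ induces homotopy invariance at the level of the inputs, compatibly with all the extra structure.

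For algebraic de Rham cohomology this is classical: by the K\"unneth formula $\R\Gamma_{\dr}({\mathbb A}^1_X) \simeq \R\Gamma_{\dr}(X)\otimes_{\ovk}\R\Gamma_{\dr}({\mathbb A}^1_{\ovk})$, and the second factor is quasi-isomorphic to $\ovk$ concentrated in degree zero with trivial Hodge filtration. Hence the pullback $f^*:\R\Gamma_{\dr}(X)\to \R\Gamma_{\dr}({\mathbb A}^1_X)$ is a filtered quasi-isomorphism. For Beilinson's Hyodo-Kato cohomology one argues similarly: $h$-descent reduces the assertion to the case of proper semistable schemes, where the corresponding crystalline/log-crystalline cohomology of ${\mathbb A}^1$ is known to be trivial (reducing to $F^{\nr}$ in degree zero, with trivial $\phi$ and $N$). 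Thus $f^*: \R\Gamma_{\hk}(X)\to \R\Gamma_{\hk}({\mathbb A}^1_X)$ is a quasi-isomorphism of complexes of $(\phi,N)$-modules over $F^{\nr}$, and compatibility with $\iota_{\dr}$ is automatic from the functoriality of Beilinson's construction.

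Combining these two inputs inside the mapping fiber in (\ref{analog1}) then yields the claim. The only real point to verify is that Beilinson's $h$-sheafified Hyodo-Kato and crystalline cohomologies indeed preserve homotopy invariance through the $h$-descent spectral sequence, together with the compatibility of $\iota_{\dr}$ and of $\iota_{\crr}: \R\Gamma_{\hk}(X)\otimes_{F^{\nr}}\B^+_{\st}\stackrel{\sim}{\to}\R\Gamma_{\crr}(X)\otimes_{\B^+_{\crr}}\B^+_{\st}$ with $f^*$; this is part of the formalism of \cite{BE2} and \cite{NN}, and is the only nontrivial step in the argument.
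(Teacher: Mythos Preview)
Your overall strategy matches the paper's: use the mapping-fiber description and reduce to homotopy invariance of the two inputs $\R\Gamma_{\hk}$ and $\R\Gamma_{\dr}$ (with their extra structures), then conclude. Where you and the paper diverge is in how homotopy invariance of Hyodo--Kato cohomology is established. You propose to argue directly via $h$-descent, reducing to a crystalline/log-crystalline computation of $\mathbb{A}^1$ over semistable models; the paper instead observes that $f^*$ on $\R\Gamma_{\hk}$ is automatically $(\phi,N)$-equivariant by functoriality, and uses the Hyodo--Kato isomorphism $\iota_{\dr}:\R\Gamma_{\hk}(X)\otimes_{F^{\nr}}\ovk\stackrel{\sim}{\to}\R\Gamma_{\dr}(X)$ to deduce that $f^*$ is a quasi-isomorphism from the already-proved filtered de Rham homotopy invariance. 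This is both shorter and cleaner: it avoids any direct appeal to $h$-descent or to log-crystalline calculations, and it makes the compatibility with $\iota_{\dr}$ automatic rather than something one has to check separately. Your $h$-descent sketch is not wrong in spirit, but as written it is vague (``$\mathbb{A}^1$ is not proper'', so one must compactify and track log structures carefully through the hypercover), whereas the paper's one-line reduction via $\iota_{\dr}$ sidesteps all of that.
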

\begin{proof}
It suffices to show that the pullback maps
\begin{align*}
f^*:(H^i_{\hk}(X)\otimes_{F^{\nr}}\B^+_{\crr})^{\phi=p^r} & \to (H^i_{\hk}({\mathbb A}^1_X)\otimes_{F^{\nr}}\B^+_{\crr})^{\phi=p^r},\\
f^*: (H^i_{\dr}(X)\otimes_{\ovk}\B^+_{\dr})/F^r & \to (H^i_{\dr}({\mathbb A}^1_X)\otimes_{\ovk}\B^+_{\dr})/F^r
\end{align*}
are isomorphisms. But this follows immediately from the fact that 
 we have a filtered isomorphism
$$f^*: H^i_{\dr}(X)\stackrel{\sim}{\to} H^i_{\dr}({\mathbb A}^1_X)
$$
and hence, via the Hyodo-Kato isomorphism, also a Frobenius equivariant isomorphism
$$f^*: H^i_{\hk}(X)\stackrel{\sim}{\to} H^i_{\hk}({\mathbb A}^1_X).
$$
\end{proof}
 \subsubsection{Projective space theorem}
   For $X\in {\mathcal V}ar_K$, we have the functorial syntomic Chern class map \cite[5.1]{NN}
\begin{align*}
c_1^{\synt}: \Pic(X) {\to }H^2_{\synt}(X,1).
\end{align*}
For $X\in {\mathcal V}ar_{\ovk}$, it yields the  syntomic Chern class map 
\begin{align*}
c_1^{\synt}: \Pic(X) {\to }H^2_{\synt}(X,1).
\end{align*}

 We  have the following projective space theorem for syntomic cohomology.
\begin{proposition}
\label{projective}
Let $\se$ be a locally free sheaf of rank $d+1$, $d\geq 0$, on a scheme $X\in {\mathcal V}ar_{\ovk}$. Consider the associated projective bundle $\pi:{\mathbb P}(\se)\to X$.  Then we have the following isomorphism 
\begin{align*}
\bigoplus_{i=0}^d{c}_1^{\synt}(\so(1))^i\cup\pi^*: \quad
\bigoplus_{i=0}^d H^{a-2i}(X,r-i)\stackrel{\sim}{\to} H^a(X,r), \quad 0\leq d \leq r.
\end{align*}
Here, the class ${c}_1^{\synt}(\so(1))\in H^2_{\synt}({\mathbb P}(\se), 1)$ refers to the class of the tautological bundle on ${\mathbb P}(\se)$.
\end{proposition}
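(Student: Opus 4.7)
The plan is to deduce the projective bundle theorem from the corresponding statements for Hyodo-Kato and de Rham cohomology, via the description of $\R\Gamma_{\synt}(X,r)$ as the homotopy fiber
$$\R\Gamma_{\synt}(X,r)\simeq \bigl[\R\Gamma_{\hk}(X)_{\B^+}^{\tau,\phi=p^r}\lomapr{\iota_{\dr}\otimes\iota}(\R\Gamma_{\dr}(X)\otimes_{\ovk}\B^+_{\dr})/F^r\bigr]$$
from (\ref{analog1}). Since homotopy fibers commute with finite direct sums, it suffices to establish projective bundle decompositions on each side that are compatible under $\iota_{\dr}\otimes\iota$, and to identify the cup product with $c_1^{\synt}(\so(1))$ with cup products by the corresponding Hyodo-Kato and de Rham Chern classes.

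First, I would record the projective bundle formula for Beilinson's Hyodo-Kato cohomology: the map
$$\bigoplus_{i=0}^d c_1^{\hk}(\so(1))^i\cup \pi^* : \bigoplus_{i=0}^d \R\Gamma_{\hk}(X)[-2i]\stackrel{\sim}{\to}\R\Gamma_{\hk}(\mathbb{P}(\se))$$
is a quasi-isomorphism of $(\phi,N)$-modules over $F^{\nr}$, with $c_1^{\hk}(\so(1))\in H^2_{\hk}(\mathbb{P}(\se))^{\phi=p,N=0}$. This can be extracted from the analogous statement for crystalline (equivalently rigid) cohomology. After $(-)\otimes_{F^{\nr}}\B^+$ and passage to the $(\phi=p^r,N=0)$-part, the $i$-th summand becomes $\R\Gamma_{\hk}(X)^{\tau,\phi=p^{r-i}}_{\B^+}[-2i]$. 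Similarly, the Hodge-filtered projective bundle formula for $\R\Gamma_{\dr}$, with $c_1^{\dr}(\so(1))\in F^1 H^2_{\dr}(\mathbb{P}(\se))$, gives, after $(-)\otimes_{\ovk}\B^+_{\dr}$ and quotient by $F^r$, a decomposition
$$(\R\Gamma_{\dr}(\mathbb{P}(\se))\otimes_{\ovk}\B^+_{\dr})/F^r\simeq\bigoplus_{i=0}^d(\R\Gamma_{\dr}(X)\otimes_{\ovk}\B^+_{\dr})/F^{r-i}[-2i],$$
valid precisely when $r-i\geq 0$, i.e. when $d\leq r$. Both decompositions are strict for the relevant filtration/Frobenius structures, so their homotopy fibers make sense term by term.

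The technical core is the compatibility of Chern classes: under the syntomic period map and the natural projections from $\R\Gamma_{\synt}$ to the Hyodo-Kato and de Rham realizations, $c_1^{\synt}(\so(1))$ goes to $c_1^{\hk}(\so(1))$ and $c_1^{\dr}(\so(1))$ (in particular $\iota_{\dr}$ identifies them). Granting this, cup product with $c_1^{\synt}(\so(1))^i$ on the syntomic fiber is exactly the map induced on fibers by cup products with $c_1^{\hk}(\so(1))^i$ and $c_1^{\dr}(\so(1))^i$ on the two sides. Combining the two decompositions above, the fiber decomposes as $\bigoplus_{i=0}^d\R\Gamma_{\synt}(X,r-i)[-2i]$, and taking cohomology yields the asserted isomorphism. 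I expect the main obstacle to be the Chern class compatibility, which forces one to unwind the construction of $c_1^{\synt}$ in \cite[5.1]{NN} and trace it through the Hyodo-Kato quasi-isomorphism $\iota_{\dr}$; once this is in hand the rest of the argument is formal and parallels the classical projective bundle formula proofs.
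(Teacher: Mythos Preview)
Your proposal is correct and follows essentially the same strategy as the paper: reduce the syntomic projective bundle theorem, via the homotopy fiber description of $\R\Gamma_{\synt}$, to the projective bundle theorems for the Hyodo--Kato and filtered de~Rham sides, together with the compatibility of $c_1^{\synt}$ with $c_1^{\hk}$ and $c_1^{\dr}$. The paper differs only in cosmetic details: it first reduces (by $h$-descent, following \cite[Prop.~5.2]{NN}) to projective space over an ss-pair, deduces the Hyodo--Kato statement from the de~Rham one via the Hyodo--Kato isomorphism rather than from crystalline cohomology, and checks the filtered de~Rham statement by passing to the associated graded and invoking the projective space theorem for Hodge cohomology.
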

\begin{proof}Just as in the proof of Proposition 5.2  from \cite{NN}, 
 the above projective space theorem can be reduced to the projective space theorems for the Hyodo-Kato and the Hodge cohomologies. We refer to loc. cit. for details and notation. 

 To prove our proposition it suffices to show that for any ss-pair $(U,\overline{U})$ over $K$ and the projective space $\pi: {\mathbb P}^d_{\overline{U}}\to\overline{U} $ of dimension $d$ over $\overline{U}$ we have a  projective space theorem for syntomic cohomology ($a\geq 0$)
\begin{align*}
\bigoplus_{i=0}^d{c}_1^{\synt}(\so(1))^i\cup\pi^*: \quad
\bigoplus_{i=0}^d H^{a-2i}_{\synt}((U,\overline{U})_{\ovk},r-i)\stackrel{\sim}{\to} H^a_{\synt}(({\mathbb P}^d_{U},{\mathbb P}^d_{\overline{U}})_{\ovk},r), \quad 0\leq d \leq r,
\end{align*}
where the class ${c}_1^{\synt}(\so(1))\in H^2_{\synt}(({\mathbb P}^d_{U},{\mathbb P}^d_{\overline{U}}), 1)$ refers to the class of the tautological bundle on ${\mathbb P}^d_{\overline{U}}$.

  By the distinguished triangle 
 \begin{equation*}
{\mathrm R}\Gamma_{\synt}((U,\overline{U})_{\ovk},r) \to
\R\Gamma_{\crr}((U,\overline{U})_{\ovk})^{\phi=p^r}\stackrel{}{\to} (\R\Gamma_{\dr}((U,\overline{U})_{\ovk})\otimes_{}\B^+_{\dr})/F^r
\end{equation*} and its compatibility with the action of $c_1^{\synt}$, it suffices to prove the following two isomorphisms for the  absolute log-crystalline complexes and for the filtered log de Rham complexes ($0\leq d \leq r$)
\begin{align}
\label{added}
\bigoplus_{i=0}^d{c}_1^{\crr}(\so(1))^i\cup\pi^*:  &\quad
\bigoplus_{i=0}^d H^{a-2i}_{\crr}((U,\overline{U})_{\ovk}) \stackrel{\sim}{\to} H^a_{\crr}(({\mathbb P}^d_{U},{\mathbb P}^d_{\overline{U}})_{\ovk}), \\
\bigoplus_{i=0}^d{c}_1^{\dr}(\so(1))^i\cup\pi^*: & \quad
\bigoplus_{i=0}^d (H^{a-2i}_{\dr}(U_{\ovk})\otimes_{}\B^+_{\dr})/F^{r-i} \stackrel{\sim}{\to} (H^a_{\dr}({\mathbb P}^d_{U,\ovk})\otimes_{}\B^+_{\dr})/F^r\notag.
\end{align}
For the crystalline cohomology in (\ref{added}), we can pass to the Hyodo-Kato cohomology. There the projective space theorem 
\begin{equation*}
\bigoplus_{i=0}^d{c}_1^{\hk}(\so(1))^i\cup\pi^*: \quad
\bigoplus_{i=0}^d H^{a-2i}_{\hk}((U,\overline{U})_{\ovk})\otimes_{F^{\nr}}\B^+_{\crr} \stackrel{\sim}{\to} H^a_{\hk}(({\mathbb P}^d_{U},{\mathbb P}^d_{\overline{U}})_{\ovk})\otimes_{F^{\nr}}\B^+_{\crr}\end{equation*}
 follows immediately, via the Hyodo-Kato isomorphism, from the projective space theorem for the de Rham cohomology.

 For the de Rham cohomology in (\ref{added}), passing to the grading we obtain
\begin{align}
\label{hkpst}
\bigoplus_{i=0}^d{c}_1^{\dr}(\so(1))^i\cup\pi^*: & \quad\bigoplus_{i=0}^d \gr^{r-i}(H^{a-2i}_{\dr}(U_{\ovk})\otimes_{\ovk}\B^+_{\dr}) \stackrel{\sim}{\to} \gr^{r}(H^a_{\dr}({\mathbb P}^d_{U,\ovk})\otimes_{\ovk}\B^+_{\dr}).
\end{align}
Since, for a variety $Y$ over $\ovk$, 
$$
\gr^{r}(H^a_{\dr}(Y)\otimes_{\ovk}\B^+_{\dr}) = \bigoplus_{i=0}^r
H^{a-r+i}_{\dr}(Y,\Omega^{r-i}_Y)\otimes_{\ovk}C,
$$
the isomorphism (\ref{hkpst})
follows from the projective space theorem for Hodge cohomology. We are done.
 \end{proof}
  \subsubsection{Bloch-Ogus Theory}

 The above implies that syntomic cohomology is representable by a motivic ring spectrum $\sss$: the argument is the same as in Appendix B of \cite{NN}. We list the following consequences.
  \begin{proposition}
\label{Bloch-Ogus}
\begin{enumerate}
\item Syntomic cohomology is covariant with respect
 to projective morphisms of smooth varieties. More precisely, to  a projective morphism of smooth $K$-varieties $f: Y\to X$ one can associate a Gysin morphism in syntomic cohomology
  $$f_*: H^i_{\synt}(Y,r)\to H^{i-2d}_{\synt}(X,r-d), $$
  where $d$ is the dimension of $f$. 
\item We have the syntomic regulator 
\begin{align*}
r_{\synt}:H^{r,i}_M(X) \to H^i_{\synt}(X,r),
\end{align*}
where $H^{r,i}_M(X)$ demotes the motivic cohomology. 
It is compatible with product, pullbacks, and pushforwards; via the period map it is compatible with the \'etale regulator.
\item The syntomic cohomology has a natural extension to 
 $h$-motives:
$$
DM_h(K,\Qp)^{op} \rightarrow D(\Qp), \quad M \mapsto \Hom_{DM_h(K,\Qp)}(M,\sss)
$$
and the syntomic regulator $r_{\synt}$ can be extended to motives.
\item There exists a canonical syntomic Borel-Moore homology
 $H^{\synt}_*(-,*)$ such that the pair of functor 
 $(H_{\synt}^*(-,*),H^{\synt}_*(-,*))$ defines a Bloch-Ogus theory.
\item To the ring spectrum $\sss$ there is associated a cohomology with
 compact support satisfying the usual properties.
\end{enumerate}
  \end{proposition}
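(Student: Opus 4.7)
The plan is to first promote $X \mapsto \R\Gamma_{\synt}(X,*)$ to a commutative ring spectrum in the stable motivic homotopy category, and then to read off items (1)--(5) from the general Bloch--Ogus / six functor formalism of Cisinski--D\'eglise. Concretely, following Appendix B of \cite{NN}, we begin by assembling the cohomology presheaves $U \mapsto \R\Gamma_{\synt}(U,r)$ into a presheaf of $\mathbf{E}_\infty$ (or at least DG) algebras on smooth $\ovk$-varieties; the multiplicative structure comes from the natural multiplication on crystalline, Hyodo--Kato and de Rham cohomology together with the lax-monoidal structure of the fiber-product definition.

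Next, one checks the two axioms that allow one to represent this presheaf by an object of the stable motivic homotopy category. Homotopy invariance is exactly Proposition~\ref{homotopy}, and $\mathbf{P}^1$-stability follows from the projective space theorem (Proposition~\ref{projective}) in the case $d=1$: the splitting
\[
H^{a}_{\synt}(\mathbf{P}^1_X,r) \simeq H^{a}_{\synt}(X,r) \oplus c_1^{\synt}(\so(1))\cup H^{a-2}_{\synt}(X,r-1)
\]
provides the orientation class and the suspension isomorphism against which one twists to form a $\mathbf{P}^1$-spectrum $\sss$. Descent in the $h$-topology, which was built into the construction of $\R\Gamma_{\synt}$ in \cite{NN}, automatically extends the resulting presheaf to an $h$-sheaf and hence to a ring spectrum in $DM_h(\ovk,\Q_p)$; this yields item (3), and item (2) is then the universal map from the motivic cohomology spectrum $H\Q_p$ to $\sss$ applied to a motive of a variety.

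With $\sss$ in hand, item (1) -- Gysin morphisms -- is formal: any oriented ring spectrum in the stable motivic homotopy category admits canonical functorial Gysin maps for projective smooth morphisms, constructed by deformation to the normal cone from the projective bundle formula via Ayoub's or Cisinski--D\'eglise's machinery. Similarly, the six functor formalism attached to $\sss$ yields a Borel--Moore homology theory $H^{\synt}_*(-,*) := \Hom_{DM_h}(\un, p_!p^!\sss(*)[*])$ and a compactly supported cohomology $H^*_{\synt,c}(-,*) := \Hom_{DM_h}(p_!\un, \sss(*)[*])$; the usual verifications (long exact sequences, Poincar\'e duality in the smooth case, cap products against a fundamental class) then show that the pair $(H^*_{\synt}, H^{\synt}_*)$ satisfies the Bloch--Ogus axioms, proving (4) and (5).

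The only delicate point, and the one I expect to require the most care, is the construction of the strictly multiplicative, $G_K$-equivariant model of $U \mapsto \R\Gamma_{\synt}(U,*)$ needed to produce a genuine ring spectrum (as opposed to a mere cohomology theory). In the analogous arithmetic setting this was handled in Appendix B of \cite{NN} by working with explicit dg-algebra models of the crystalline, Hyodo--Kato and filtered de Rham complexes compatible with pullbacks and cup products; here one has to carry the same construction through after base change to $\ovk$ and after tensoring with the period rings $\B^+_{\st}$, $\B^+_{\dr}$, taking care that the Hyodo--Kato and de Rham trivializations are strictly (not just up to quasi-isomorphism) multiplicative. Once this technical strictification is in place, the rest of the argument is a mechanical transcription of the Appendix B proof, so we merely refer to \emph{loc.\ cit.} for the remaining details.
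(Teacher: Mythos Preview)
Your proposal is correct and matches the paper's approach exactly: the paper gives no detailed proof here, merely stating that ``the argument is the same as in Appendix~B of \cite{NN}'' once homotopy invariance (Proposition~\ref{homotopy}) and the projective space theorem (Proposition~\ref{projective}) are in hand, and then lists (1)--(5) as formal consequences of the resulting motivic ring spectrum~$\sss$. Your sketch is a faithful and somewhat more explicit unpacking of precisely this reference.
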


\subsection{Fundamental (long) exact sequence}In this section, we will discuss certain exact sequences that involve syntomic cohomology. 
 Recall that we have   
 \begin{align}
 \label{mapfib}
 \R\Gamma_{\synt}(X,r)= & [\R\Gamma_{\crr}(X)^{\phi=p^r}\lomapr{\gamma}\R\Gamma_{\dr}(X)\otimes_{\ovk}\B^+_{\dr})/F^r]\\
  & [(\R\Gamma_{\hk}(X)\otimes_{F} \B^+_{\st})^{\phi=p^r,N=0}\lomapr{\iota_{\dr}\otimes\iota}(\R\Gamma_{\dr}(X)\otimes_{\ovk}\B^+_{\dr})/F^r]\notag
 \end{align}
 This yields a long exact sequence of cohomology that simplifies quite a bit. 
Indeed, we have 
 $H^j((\R\Gamma_{\hk}(X)\otimes_{F} \B^+_{\st})^{\phi=p^r,N=0})=(H^j_{\hk}(X)\otimes_{F}\B^+_{\st})^{\phi=p^r,N=0}$
  \cite[Corollary 3.25]{NN}. It follows  that $H^j\R\Gamma_{\crr}(X)^{\phi=p^r}=H^j_{\crr}(X)^{\phi=p^r}\simeq (\R\Gamma_{\hk}(X)\otimes_{F^{\nr}}\B_{\crr}^+)^{\phi=p^r}$. By the degeneration of the Hodge-de Rham spectral sequence, 
  we also have  $ H^j((\R\Gamma_{\dr}(X)\otimes_{\ovk}\B^+_{\dr})/F^r)=(H^j_{\dr}(X)\otimes_{\ovk}\B^+_{\dr})/F^r$.  Hence, from the mapping fiber (\ref{mapfib}),  we get the following fundamental long exact sequence
  $$\to H^{i-1}_{\crr}(X)^{\phi=p^r}\stackrel{\gamma_{i-1}}{\to} (H^{i-1}_{\dr}(X)\otimes_{\ovk}\B^+_{\dr}) /F^r\to H^i_{\synt}(X,r) \to H^i_{\crr}(X)^{\phi=p^r}\stackrel{\gamma_i}{\to}  (H^i_{\dr}(X)\otimes_{\ovk}\B^+_{\dr}) /F^r\to 
$$
that we will write alternatively as 
\begin{align}
\label{seq0}
\to (H^{i-1}_{\hk}(X)\otimes_{F}\B^+_{\st})^{\phi=p^r,N=0} & \stackrel{\gamma_{i-1}}{\to} (H^{i-1}_{\dr}(X)\otimes_{\ovk}\B^+_{\dr}) /F^r\to H^i_{\synt}(X,r) \\
  & \to (H^i_{\hk}(X)\otimes_{F}\B^+_{\st})^{\phi=p^r,N=0} \stackrel{\gamma_i}{\to}  (H^i_{\dr}(X)\otimes_{\ovk}\B^+_{\dr}) /F^r\to \notag
\end{align}
It yields  the exact sequence
\begin{equation}
\label{seq1}
0\to \coker \gamma_{i-1}\to H^i_{\synt}(X,r) \to  \ker \gamma_i\to 0.
\end{equation}
\begin{example}
Let $X=\Spec(\ovk)$. We get the exact sequence
$$
{\to} (H^{i-1}_{\dr}(\ovk)\otimes_{\ovk}\B^+_{\dr}) /F^r\to H^i_{\synt}(\ovk,r) \to H^i_{\crr}(\ovk)^{\phi=p^r}{\to}  (H^i_{\dr}(\ovk)\otimes_{\ovk}\B^+_{\dr}) /F^r\to 
$$
Since $H^i_{\crr}(\ovk)=H^i_{\crr}(F)\otimes_{F}\B^+_{\crr}$, we get that $H^0_{\crr}(\ovk)=\B^+_{\crr}$ and $H^i_{\crr}(\ovk)=0$, for $i>0$. Also, clearly, $H^0_{\dr}(\ovk)=\ovk$ and $H^i_{\dr}(\ovk)=0$, for $i >0$. Hence the above sequence becomes the fundamental exact sequence
$$
0\to \Qp(r)\to (\B^{+}_{\crr})^{\phi=p^r}\to \B^+_{\dr}/F^r\to 0
$$
It implies that $H^0_{\synt}(\ovk,r)\simeq \Qp(r)$ and $H^i_{\synt}(\ovk,r)=0$, for $i >0$.
\end{example}
\subsubsection{Relation to extensions of $\phi$-modules over $\ovk$}It turns out that the kernel and cokernel appearing in the exact sequence (\ref{seq1}) are extension groups in the category of filtered $\phi$-modules over $\ovk$. To see this, 
   set $D^i(r):= H^i_{\hk}(X)^{\tau}_{\B^+}$ with Frobenius $\phi_r=\phi/p^r$ and set $\Lambda^i(r):= F^r (H^i_{\dr}(X)\otimes_{\ovk}\B^+_{\dr})$. Let $H^i_{\G}(X,r):=(D^i(r),\Lambda^i(r))$. 
   Since $H^i_{\hk}(X)\otimes_{\B^+}\B^+_{\dr}\simeq H^i_{\dr}(X)\otimes_{\ovk}\B^+_{\dr}$ (via the Hyodo-Kato isomorphism), we have 
   $H^i_{\G}(X,r)\in \G^{+} $. 

\begin{lemma}
\label{long}
We have the following exact sequences
\begin{align*}
 & 0\to H^1_{+}(\ovk,H^{i-1}_{\G}(X,r))\to H^i_{\synt}(X,r) \to  H^0_{+}(\ovk,H^i_{\G}(X,r))\to 0.\\
& 0\to \kker(H^1(X_{\ff},\se(H^{i-1}_{\G}(X,r)))\to H^1(X_{\ff},\se(D^{i-1}(r))))
  \to H^i_{\synt}(X,r)\\
   & \quad \quad \to H^0(X_{\ff},\se(H^{i}_{\G}(X,r)))\to 0
\end{align*}

  Moreover, for $i\leq r+1$ or $r\geq d$, there are natural isomorphisms
$$
H^i_{\synt}(X,r) \stackrel{\sim}{\to } H^0_{+}(\ovk,H^{i}_{\G}(X,r))\simeq  H^0(X_{\ff},\se(H^{i}_{\G}(X,r))).
$$\end{lemma}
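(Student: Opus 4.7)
The plan is to derive both short exact sequences from~(\ref{seq1}) by identifying $\ker\gamma_i$ and $\coker\gamma_{i-1}$ with the claimed extension groups (resp.\ Fargues--Fontaine cohomology groups), and then to obtain the final isomorphism by verifying that $\coker\gamma_{i-1}$ vanishes in the stated ranges.

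First I would insert the definitions $D^i(r)=\R\Gamma_{\hk}(X)^{\tau}_{\B^+}$ (with Frobenius $\phi_r$) and $\Lambda^i(r)=F^r(H^i_{\dr}(X)\otimes_{\ovk}\B^+_{\dr})$ into formula~(\ref{form}). The trivialization $\R\Gamma_{\hk}(X)^{\tau}_{\B^+}\simeq\R\Gamma_{\hk}(X)\otimes_{F^{\nr}}\B^+$ combined with formula~(\ref{formula-passage}) yields
$$
D^i(r)^{\phi_r=1}\simeq (H^i_{\hk}(X)\otimes_{F^{\nr}}\B^+_{\st})^{\phi=p^r,N=0},
$$
while the Hyodo--Kato isomorphism identifies $D^i(r)\otimes_{\B^+}\B^+_{\dr}/\Lambda^i(r)\simeq (H^i_{\dr}(X)\otimes_{\ovk}\B^+_{\dr})/F^r$. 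Under these identifications the defining arrow of $H^*_+$ in~(\ref{form}) becomes precisely $\gamma_i$, so that $H^0_+(\ovk,H^i_{\G}(X,r))=\ker\gamma_i$ and $H^1_+(\ovk,H^{i-1}_{\G}(X,r))=\coker\gamma_{i-1}$. Feeding this back into~(\ref{seq1}) produces the first exact sequence.

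For the second exact sequence I would invoke the Remark identifying, for an effective modification $M\leftrightarrow(\se_1,\se_2,u)$, the groups $H^0_+(\ovk,M)$ and $H^1_+(\ovk,M)$ with $H^0(X_{\ff},\se_1)$ and $\ker\bigl(H^1(X_{\ff},\se_1)\to H^1(X_{\ff},\se_2)\bigr)$ respectively. For $M=H^{*}_{\G}(X,r)$ the associated modification has $\se_2=\se(D^{*}(r))$ and $\se_1=\se(H^{*}_{\G}(X,r))$, so substitution into the first sequence is immediate.

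Finally, the isomorphism in the stated range reduces to the surjectivity of $\gamma_{i-1}$. This is the right-hand map in the fundamental short exact sequence of $p$-adic Hodge theory applied to the (admissible) filtered $(\phi,N)$-module $H^{i-1}_{\hk}(X)$: it is exact as soon as $r$ exceeds both the maximal Hodge degree of $H^{i-1}_{\dr}(X)$ and the maximal Frobenius slope of $H^{i-1}_{\hk}(X)$. For a smooth variety of dimension $d$ both invariants are bounded by $\min(i-1,d)$ (via the Hodge filtration and the Mazur--Katz inequality), so either $i\leq r+1$ or $r\geq d$ suffices. The main obstacle I anticipate here is justifying the fundamental exact sequence for an arbitrary variety over $\ovk$, which requires the $h$-topological Hyodo--Kato and crystalline formalism of Beilinson together with the admissibility statements it satisfies.
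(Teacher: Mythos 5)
Your proof is correct and follows essentially the same route as the paper: you identify $\ker\gamma_i$ and $\coker\gamma_{i-1}$ with the extension groups via~(\ref{form}), pass to the Fargues--Fontaine curve through the exact sequence of sheaves from the remark on effective modifications, and obtain the final isomorphism from admissibility. The only cosmetic difference is that the paper establishes the vanishing of $\coker\gamma_{i-1}$ by reducing to a model $X_L$ over a finite extension $L/K$ and citing [NN, Cor.~2.10], whereas you unwind that corollary directly as the surjectivity in the fundamental exact sequence attached to an admissible filtered $(\phi,N,G_L)$-module with Hodge--Tate weights and Frobenius slopes bounded by $\min(i-1,d)$.
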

\begin{proof}
Since  $\ker \gamma_i=D^i(r)^{\phi_r=1}\cap\Lambda^i(r)$ and $\coker \gamma_{i}=(D^i(r)\otimes_{\B^+}\B^+_{\dr})/(\Lambda^i(r)+D^i(r)^{\phi_r=1})$, the first exact sequence follows from  (\ref{seq1}) and (\ref{form}).
The second exact sequence follows from that and 
  from  the exact sequence of sheaves on $X_{\ff}$ 
$$
0\to \se(H^{i}_{\G}(X,r))\to \se(D^i(r))\to i_{\infty *}(D^i(r)\otimes_{\B^+} \B^+_{\dr}/\Lambda^i(r))\to 0.
$$

 The last statement of the lemma follows from the first exact sequence and the fact that, for $i\leq r$ or $r\geq d$,   the filtered $\phi$-module $H^{i}_{\G}(X,r)$ is admissible. To see the last claim, note that  the variety $X$ comes from a variety $X_L$  defined over some finite extension $L$ of $K$. Hence, by comparison theorems, the pair $(H^i_{\hk}(X_L),H^i_{\dr}(X_L))$ forms  an admissible $(\phi,N,G_L)$-module.
Since $F^{r+1}H^i_{\dr}(X)=0$, we can now simply quote  \cite[Cor. 2.10]{NN}. \end{proof}
In the next section we will give a more conceptual reason for the existence  of the sequences from  the above lemma.
We note, that the above lemma implies that the fundamental (long) exact sequence (\ref{seq0})
 splits in a  stable range: for $i\leq r$, we have the fundamental short exact sequences
 $$
 0\to H^i_{\synt}(X,r)\to (H^i_{\hk}(X)\otimes_{F^{\nr}}\B^+_{\st})^{\phi=p^r,N=0}\verylomapr{\iota_{\dr}\otimes\iota}(H^i_{\dr}(X)\otimes_{\ovk}\B^+_{\dr})/F^r\to 0
 $$
\section{The $p$-adic absolute Hodge cohomology}We will show in this section that the geometric syntomic cohomology is a $p$-adic absolute cohomology,  that is, that 
to every variety over $\ovk$ one  can associate a canonical complex of effective filtered $\phi$-modules over $\ovk$ and syntomic cohomology is $\R\Hom$ from the trivial module to that complex. We will describe two methods of constructing such complexes. 
\subsection{Via geometric $p$-adic Hodge complexes}
  \subsubsection{The category of geometric $p$-adic Hodge complexes}
 Let $\Mod^{\prime}_{\B_{\dr}^+}$, $F\Mod^{\prime}_{\B_{\dr}^+}$ be the categories  $\Mod_{\B_{\dr}^+}$, $F\Mod_{\B_{\dr}^+}$ with the finiteness conditions dropped. These are  exact  categories.  Consider
the  exact monoidal functors 
$$F_0: \Mod^{\prime}_{\B^+}(\phi)\to \Mod_{\B^+_{\dr}},\quad  D\mapsto D\otimes_{\B^+}\B^+_{\dr}; \quad 
F_{\dr}: F\Mod^{\prime}_{\B^+_{\dr}}\to \Mod_{\B^+_{\dr}}, \quad (\Lambda,M)\mapsto M.
$$ 

We define the dg category $\sd_{\ph}$ of  $p$-adic Hodge complexes as the homotopy limit 
$$\sd_{\ph}:=\holim(\sd^b(\Mod_{\B^+}(\phi))\stackrel{F_0}{\to} \sd^b(\Mod^{\prime}_{\B^+_{\dr}})\stackrel{F_{\dr}}{\leftarrow}
\sd^b(F\Mod^{\prime}_{\B^+_{\dr}}))
$$
We denote by  $D_{\ph}$  the homotopy category of $\sd_{\ph}$.
An object of $\sd_{\ph}$ consists of objects $M_{0}\in \sd^b(\Mod_{\B^+}(\phi))$,
$M_{K}\in \sd^b(F\Mod^{\prime}_{\B^+_{\dr}})$,  and a quasi-isomorphism
$$
F_0(M_{0})\stackrel{a_M}{\to}  F_{\dr}(M_{K})$$
 in $\sd(\Mod^{\prime}_{\B^+_{\dr}})$. We will denote the object above by 
$
M=(M_0, M_{K}, a_M).
$
 The morphisms are given by the complex $ \Hom_{\sd_{\ph}}((M_0,M_{K},a_M),(N_0,N_{K},a_N))$:
\begin{equation}
\begin{split}
 \quad \quad  &\Hom^i_{\sd^g_{\ph}}((M_0,M_{K},a_M),(N_0,N_{K},a_N)) \\
  & \qquad\quad=\Hom^i_{\sd^b(\Mod_{\B^+}(\phi))}(M_0,N_0)\oplus \Hom^i_{\sd^b(F\Mod^{\prime}_{\B^+_{\dr}})}(M_{K},N_{K})\oplus \Hom^{i-1}_{\sd^b(\Mod^{\prime}_{\B^+_{\dr}})}(F_0(M_0),F_{\dr}(N_{K}))
\end{split}
\end{equation}
A (closed) morphism 
$(a,b,c)\in \Hom_{\sd_{\ph}}((M_0,M_{K},a_M),(N_0,N_{K},a_N))$ is a quasi-isomorphism if and only so are the morphisms $a$ and $b$.

By definition, we get a commutative square of  dg categories over $\Qp$:
\begin{equation} \label{eq:specialization_ph}
\xymatrix@=10pt{
\sd_{\ph}\ar^-{T_{\dr}}[r]\ar_{T_{0}}[d] & \sd^b(F\Mod^{\prime}_{\B^+_{\dr}})\ar^{F_{\dr}}[d] \\
\sd^b(\Mod_{\B^+}(\phi))\ar^-{F_0}[r] & \sd^b(\Mod^{\prime}_{\B^+_{\dr}}).
}
\end{equation}
 As pointed out above, a morphism $f$ of  $p$-adic Hodge complexes is a quasi-isomorphism
 if and only if $T_{\dr}(f)$ and $T_{0}(f)$ are quasi-isomorphisms.

  For  $M\in C^b(\G^{+} )$, we can define $\theta(M)\in \sd_{\ph}$ to be the object
$$
\theta(M):=
(D, (\Lambda,F_{0}(D)),\id:F_0(D)\simeq F_0(D)).
$$
Since $\theta$ preserves quasi-isomorphisms, it induces a compatible functor
$$
\theta: \quad\sd^b(\G^{+} )\to \sd_{\ph}.
$$

\begin{definition}
We will say that a $p$-adic Hodge complex $M=(M_0,M_K,a_M)$
 is \emph{geometric} if the complex $M_0$ is in the image of the canonical functor $C^b(\Mod_{F^{\nr}}(\phi))\to C^b(\Mod_{\B^+}(\phi))$ and the complex $M_K$ is strict with torsion-free finite rank cohomology groups (defined in the category of pairs of $\B^+_{\dr}$-modules). We note that we have the exact sequence (in the exact category $F\Mod^{\prime}_{\B^+_{\dr}}$)
\begin{equation}
 \label{strict}0\to \im d^{i-1} {\to} \ker d^i\stackrel{p}{\to} H^i\to 0.
\end{equation}
 Denote
  by $\sd_{\ph}^{g}$ the full dg subcategory of $\sd_{\ph}$ of geometric $p$-adic Hodge complexes.
\end{definition}
We note that for a geometric complex $M=(M_0,M_K,a_M)$ it makes sense to take its cohomology groups: 
$$H^i(M):=(H^i(M_0), H^i(M_K),a_M:F_0H^i(M_0)\simeq F_{\dr}H^i(M_K))\in \G^{+} , \quad i\geq 0. $$
\begin{proposition}
\label{thm1}
The functor $\theta$ induces an equivalence of dg categories
$$
\theta: \quad\sd^g(\G ^{+}) \stackrel{\sim}{\rightarrow} \sd^{g}_{\ph},
$$
where $\sd^g(\G ^{+})$ denotes the full dg subcategory of $\sd^b(\G ^{+})$ of geometric $p$-adic Hodge complexes.
\end{proposition}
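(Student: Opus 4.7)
The plan is to verify the two standard ingredients of an equivalence of dg categories: that $\theta$ induces a quasi-isomorphism on Hom-complexes, and that every geometric $p$-adic Hodge complex is isomorphic, in the underlying homotopy category, to an object in the image of $\theta$.

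First I would establish the quasi-isomorphism of Hom-complexes. Take $M=(D,\Lambda)$ and $N=(D^{\prime},\Lambda^{\prime})$ in $C^b(\G^{+})$ and unfold the definition of $\Hom_{\sd_{\ph}}(\theta(M),\theta(N))$. Because the comparison maps of both $\theta(M)$ and $\theta(N)$ are identities, the homotopy term $\Hom^{i-1}_{\sd^b(\Mod^{\prime}_{\B^+_{\dr}})}(F_0(D),F_{\dr}(N_K))$ simplifies to $\Hom^{i-1}_{\sd^b(\Mod^{\prime}_{\B^+_{\dr}})}(F_0(D),F_0(D^{\prime}))$, and the full homotopy fiber collapses to precisely the complex $\Hom^{+}(M,N)$ defined in Section~3.2. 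By Proposition~\ref{comp1} this is quasi-isomorphic to $\Hom_{\sd^b(\G^{+})}(M,N)$, as required.

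Next I would establish essential surjectivity. Let $M=(M_0,M_K,a_M)$ be a geometric $p$-adic Hodge complex, so that $M_0=D_0\otimes_{F^{\nr}}\B^+$ for some complex of $\phi$-modules $D_0$ over $F^{\nr}$, $M_K=(\Lambda,N_K)$ is strict in $F\Mod^{\prime}_{\B^+_{\dr}}$ with torsion-free finite rank cohomology, and $a_M: F_0(M_0)\stackrel{\sim}{\to} N_K$ is a quasi-isomorphism. Since $\B^+_{\dr}$ is a PID, elementary divisors theory applied term by term to the exact sequence (\ref{strict}) splits $M_K$, up to a quasi-isomorphism in $\sd^b(F\Mod^{\prime}_{\B^+_{\dr}})$, as the direct sum of its cohomology objects $H^i(M_K)[-i]$ with zero differentials. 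Using the Dieudonn\'e--Manin decomposition of $D_0$ in $\Mod_{F^{\nr}}(\phi)$, the same kind of term-by-term splitting applies to $F_0(M_0)$. Along these split models, the quasi-isomorphism $a_M$ lifts to an honest isomorphism, and the lattice $\Lambda$ pulls back to a $\B^+_{\dr}$-lattice $\Lambda^{\prime}\subset F_0(M_0)[1/t]$. The pair $(M_0,\Lambda^{\prime})$ then defines an object of $C^b(\G^{+})$ whose image under $\theta$ is quasi-isomorphic to $M$ via the constructed splittings, and one checks directly that it lies in $\sd^g(\G^{+})$.

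The main obstacle will be the last step: the comparison map $a_M$ is a priori only a quasi-isomorphism in the dg derived category and does not transport lattices at the chain level. The geometric hypothesis (strictness together with torsion-free finite rank cohomology) is calibrated precisely to allow the term-by-term splitting over the PID $\B^+_{\dr}$, which converts $a_M$ into a chain-level zigzag along which the lattice structure can be pulled back cleanly. Without it the lattice could interact pathologically with the differentials and fail to define an object of $\G^{+}$. Once this step is executed, fully faithfulness on Hom-complexes and essential surjectivity together deliver the claimed equivalence $\theta: \sd^g(\G^{+})\stackrel{\sim}{\rightarrow}\sd^{g}_{\ph}$.
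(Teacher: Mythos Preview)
Your proposal is correct and follows essentially the same route as the paper: fully faithfulness via the identification of the Hom-complex in $\sd_{\ph}$ with $\Hom^{+}$ and Proposition~\ref{comp1}, and essential surjectivity by splitting both $M_0$ and $M_K$ as direct sums of their cohomology so that the comparison map becomes an honest isomorphism of complexes. The paper is terser---it simply records the splitting~(\ref{correction}) ``by assumption'' and observes that $a'_M$ is then an isomorphism---while you spell out the reasons the splittings exist (split exact structure on the $\phi$-module side, strictness plus the PID $\B^+_{\dr}$ on the filtered side), but the argument is the same.
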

\begin{proof}
First, we will show that $\theta$ is fully faithful. That is, that, given two complexes $M$, $M'$ of $C^g(\G^{+} )$,
 the functor $\theta$ induces a quasi-isomorphism:
$$
\theta:\quad \Hom_{\sd^b(\G^{+} )}(M,M')
 \rightarrow \Hom_{\sd^g_{\ph}}(\theta(M),\theta(M'))
$$ By Proposition \ref{comp1}, since $F_0(M_0)=F_{\dr}(M_K), F_0(M^{\prime}_0)=F_{\dr}(M^{\prime}_K)$, 
we have the following sequence of quasi-isomorphisms 
 \begin{align*}
     \Hom_{\sd^g_{\ph}}(\theta(M),\theta(M')) & =\Hom_{\sd^g_{\ph}}((M_0,M_{K},\id_M)  ,(M^{\prime}_0,M^{\prime}_{K},\id_{M^{\prime}}))\\
      & \simeq (\Hom_{\sd^b(\Mod_{\B^+}(\phi))}(M_0,M^{\prime}_0)\stackrel{F_0}{\to}  \Hom_{\sd^b(\Mod_{\B^+_{\dr}})}(F_0(M),F_{\dr}(M^{\prime})) \stackrel{F_{\dr}}{\leftarrow}  \Hom_{\sd^b(F\Mod_{\B^+_{\dr}})}(M_{K},M^{\prime}_{K}))\\
 & \simeq (\Hom_{\B^+,\phi}(M_0,M^{\prime}_0)\stackrel{F_0}{\to}  \Hom_{{\B^+_{\dr}}}(F_0(M),F_{\dr}(M^{\prime})) \stackrel{F_{\dr}}{\leftarrow}  \Hom_{F\Mod_{\B^+_{\dr}}}(M_{K},M^{\prime}_{K}))\\
 & \simeq \Hom_{\sd^b(\G^{+} )}(M,M^{\prime}).
 \end{align*}
 
 Now, it remains to show that $\theta$ is essentially surjective.  Let $(M_0,M_K,a_M)$ be a geometric $p$-adic Hodge complex. Then, by assumption,
 \begin{align}
 \label{correction}
 (M_0,M_K,a_M) \simeq 
 ( \oplus_{i\geq 0}H^i(M_0)[-i],& \oplus_{i\geq 0}H^i(M_K)[-i], \\
  & a^{\prime}_M:\oplus_{i\geq 0}F_0H^i(M_0)[-i]\simeq  \oplus_{i\geq 0}F_{\dr}H^i(M_K)[-i] ).\notag
 \end{align}
 Note that $a^{\prime}_M$ is actually an isomorphism of complexes. Essential surjectivity of $\theta$ follows.
 \end{proof}
 \begin{remark}
 We note that the proof of Proposition \ref{thm1} shows that every geometric $p$-adic Hodge complex is quasi-isomorphic to a complex of its cohomology filtered $\phi$-modules  (with trivial differentials). 
 \end{remark}

\subsubsection{  $p$-adic absolute Hodge cohomology via geometric $p$-adic Hodge complexes}
Let $X$ be a variety over $\ovk$. 
 For $r\geq 0$, consider the following complex in $\sd^{g}_{\ph}$
$$
\R\Gamma^g_{\ph}(X,r):=( \R\Gamma_{\hk}(X,r)_{\B^+}^{\tau}, (\R\Gamma_{\dr}(X)\otimes_{\ovk}\B^+_{\dr},F^r), \iota_{\dr}:\R\Gamma_{\hk}(X)_{\B^+}^{\tau}\otimes_{\B^+}\B^+_{\dr}\stackrel{\sim}{\to}\R\Gamma_{\dr}(X)\otimes_{\ovk}\B^+_{\dr}),
$$
 where the $r$-twist in $\R\Gamma_{\hk}(X,r)_{\B^+}^{\tau} $ refers to Frobenius divided by $p^r$. As explained at the beginning of Section \ref{kwak?}, the $p$-adic Hodge complex $\R\Gamma^g_{\ph}(X,r)$ is geometric. 
 We will call it
{\em the geometric $p$-adic Hodge cohomology of $X$}.
Set 
\begin{align*}
\R\Gamma_{\G}(X,r) & :=\theta^{-1}\R\Gamma^g_{\ph}(X,r)\in\sd^g(\G^{+} ).
\end{align*}

The {\em  $p$-adic absolute Hodge cohomology of $X$}  is  defined as
\begin{align}
\label{def1}
\R\Gamma_{\sh}(X,r):=\Hom_{\sd^g_{\ph}}(\un,\R\Gamma^g_{\ph}(X,r)).
\end{align}
By Proposition \ref{thm1}, we have 
\begin{align*}
\R\Gamma_{\sh}(X,r) \simeq \Hom_{\sd^b(\G^{+} )}(\un,\R\Gamma_{\G}(X,r)).
\end{align*}

\begin{theorem}
\label{compsynph}
There exists a natural quasi-isomorphism (in the classical derived category)
$$\R\Gamma_{\synt}(X,r)\stackrel{\sim}{\to} \R\Gamma_{\sh}(X,r), \quad r\geq 0.
$$
\end{theorem}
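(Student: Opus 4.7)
The plan is to identify both sides of the claimed quasi-isomorphism with the same explicit mapping-fiber complex by chaining the two structural results of Section 3 and Section 5. The first move is to apply Proposition \ref{thm1} to replace $\Hom_{\sd^g_{\ph}}(\un, \R\Gamma^g_{\ph}(X,r))$ with $\Hom_{\sd^b(\G^{+})}(\un, \R\Gamma_{\G}(X,r))$; this requires that both $\un=(\B^+,\B^+_{\dr})$ and $\R\Gamma_{\G}(X,r)$ live in the geometric subcategory. The Hyodo--Kato factor $\R\Gamma_{\hk}(X,r)^{\tau}_{\B^+}$ descends from $C^b(\Mod_{F^{\nr}}(\phi))$ through the canonical trivialization recalled in Section \ref{kwak?}, while the strictness and torsion-freeness of the de Rham factor follow from the degeneration of the Hodge--de Rham spectral sequence recalled there as well.

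The next step is to use Proposition \ref{comp1} to represent $\Hom_{\sd^b(\G^{+})}(\un, \R\Gamma_{\G}(X,r))$ by the explicit complex $\Hom^{+}(\un, \R\Gamma_{\G}(X,r))$ and then compute each of its three summands. The Frobenius $\Hom$-term is the mapping fiber of $1-\phi_r$ via the short exact sequence (\ref{exact}), i.e.\ $\R\Gamma_{\hk}(X,r)^{\tau,\phi=p^r}_{\B^+}$; the filtered $\Hom$-term is tautologically $F^r(\R\Gamma_{\dr}(X)\otimes_{\ovk}\B^+_{\dr})$, since a morphism out of the filtered object $(\B^+_{\dr},\B^+_{\dr})$ is just an element of the lattice; and the unfiltered term is the ambient $\R\Gamma_{\dr}(X)\otimes_{\ovk}\B^+_{\dr}$. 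Assembling these,
$$\Hom^{+}(\un, \R\Gamma_{\G}(X,r))=\left[\R\Gamma_{\hk}(X,r)^{\tau,\phi=p^r}_{\B^+}\oplus F^r(\R\Gamma_{\dr}(X)\otimes_{\ovk}\B^+_{\dr})\lomapr{\iota_{\dr}-\can}\R\Gamma_{\dr}(X)\otimes_{\ovk}\B^+_{\dr}\right].$$

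The final step is a standard homotopy-fiber manipulation: the homotopy fiber of a map of the shape $A\oplus B\to C$ whose restriction to $B$ is a strict monomorphism is quasi-isomorphic to the homotopy fiber of the induced map $A\to C/B$. Applied here this collapses the three-term complex above to the model $[\R\Gamma_{\hk}(X,r)^{\tau,\phi=p^r}_{\B^+}\lomapr{\iota_{\dr}}(\R\Gamma_{\dr}(X)\otimes_{\ovk}\B^+_{\dr})/F^r]$ of $\R\Gamma_{\synt}(X,r)$ recorded in (\ref{analog1}), and the resulting quasi-isomorphism is manifestly natural in $X$ and $r$. The only non-formal input is the strictness of the inclusion $F^r\hookrightarrow\R\Gamma_{\dr}(X)\otimes_{\ovk}\B^+_{\dr}$: without it the cokernel complex could carry unwanted torsion and the passage from the three-term to the two-term homotopy fiber would fail. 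This strictness is precisely the content of the geometricity condition on $\R\Gamma^g_{\ph}(X,r)$ and is exactly what the Hodge--de Rham degeneration guarantees, so no further ingredients are required.
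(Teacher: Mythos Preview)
Your argument is correct and follows essentially the same route as the paper: both identify $\R\Gamma_{\sh}(X,r)$ with the three-term mapping fiber
\[
\bigl[\R\Gamma_{\hk}(X)_{\B^+}^{\tau,\phi=p^r}\oplus F^r(\R\Gamma_{\dr}(X)\otimes_{\ovk}\B^+_{\dr})\ \lomapr{\iota_{\dr}\otimes\iota-\can}\ \R\Gamma_{\dr}(X)\otimes_{\ovk}\B^+_{\dr}\bigr],
\]
which is a rewriting of the syntomic complex (\ref{analog1}). The only difference is bookkeeping: the paper reads this three-term complex off directly from the definition of $\Hom_{\sd^g_{\ph}}(\un,-)$ (so that the identification with $\R\Gamma_{\sh}(X,r)$ is literally the definition (\ref{def1})), whereas you first pass through Proposition~\ref{thm1} to $\Hom_{\sd^b(\G^{+})}(\un,-)$ and then invoke Proposition~\ref{comp1} to unpack $\Hom^{+}(\un,-)$. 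Both routes land on the same complex and use the same geometricity hypothesis.

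One small correction: the collapse from the three-term to the two-term homotopy fiber is formal once $F^r\hookrightarrow \R\Gamma_{\dr}(X)\otimes_{\ovk}\B^+_{\dr}$ is a termwise injection of complexes, which it is by construction; strictness of the filtration is not needed for \emph{this} step. Where strictness (and torsion-freeness of the cohomology) genuinely enters is earlier, exactly where you first invoke it: to ensure $\R\Gamma^g_{\ph}(X,r)$ is geometric so that Proposition~\ref{thm1} applies. So your ingredients are right, but the dependency is placed one step too late.
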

\begin{proof}We can write
   \begin{align*}
\R\Gamma_{\synt}(X,r)
  \stackrel{\sim}{\to}
  [\xymatrix{\R\Gamma_{\hk}(X)_{\B^+}^{\tau,\phi=p^r}\oplus F^r(\R\Gamma_{\dr}(X)\otimes_{\ovk}\B^+_{\dr})  \ar[rr]^-{\iota_{\dr}\otimes\iota-\can} & &  \R\Gamma_{\dr}(X)\otimes_{\ovk}\B^+_{\dr}}]
 \end{align*}
 By Proposition \ref{thm1}, we have 
 \begin{align*}
\R\Gamma_{\synt}(X,r)\simeq \Hom_{\sd^g_{\ph}}(\un,\R\Gamma^g_{\ph}(X,r))\simeq
 \Hom_{\sd^b(\G^{+} )}(\un,\R\Gamma_{\G}(X,r))\simeq  \R\Gamma_{\sh}(X,r),
\end{align*}
as wanted.
\end{proof}
\subsection{Via Beilinson's Basic Lemma}
   Using Beilinson's  Basic Lemma as in \cite{DN} we can associate to every $X\in {\mathcal V}ar_{\ovk}$ (more generally, to any finite diagram of such $X$) the following functorial data
   \begin{enumerate}
   \item $\R\Gamma^B_{\hk}(X)$, a complex of finite $(\phi,N)$-modules over $F^{\nr}$ representing $\R\Gamma_{\hk}(X)$.
   \item $\R\Gamma^B_{\dr}(X)$, a complex of finite filtered $\ovk$-vector spaces representing $\R\Gamma_{\dr}(X)$.
   \item the Hyodo-Kato isomorphism of complexes
   $$
   \iota_{\dr}: \R\Gamma^B_{\hk}(X) \otimes_{F^{\nr}}\ovk  \stackrel{\sim}{\to}\R\Gamma^B_{\dr}(X) 
   $$
   representing the original Hyodo-Kato map $\iota_{\dr}$.
   \end{enumerate}
They yield a    
    functorial complex
$\R\Gamma^B_{\G}(X,r)$, $r\geq 0$,  of effective filtered $\phi$-modules    $(\R\Gamma^B_{\hk}(X)_{\B^+}^{\tau},(\R\Gamma^B_{\hk}(X)\otimes_{\ovk}\B^+_{\dr},F^r),\iota_{\dr})$.
By construction,  for every good pair $(X,Y,i)$, the associated complex
$$
\R\Gamma^B_{\G}(X,Y,r)\simeq H^i_{\G}(X,Y,r):=(H^i_{\hk}(X,Y,r)^{\tau}_{\B^+},(H^i_{\dr}(X,Y)\otimes_{\ovk}B^+_{\dr},F^r),\iota_{\dr}).
$$
We note that the complex $\R\Gamma^B_{\G}(X,r)$ is geometric. Indeed,  Hodge Theory yields that "geometric" morphisms between de Rham cohomology groups are strict for the Hodge-Deligne filtration. Hence the complex $\R\Gamma^B_{\dr}(X)$ has strict differentials.
This implies that this is also the case for the complex $\R\Gamma^B_{\dr}(X)\otimes_{\ovk}\B^+_{\dr}$. Moreover, the cohomology groups  $H^i\R\Gamma^B_{\G}(X,r)$ are effective filtered $\phi$-modules.  Hence $\R\Gamma^B_{\G}(X,r)\in \sd^g(G^{+} )$.

 We set
$$
\R\Gamma^B_{\sh}(X,r)=\R\Gamma^B_{\synt}(X,r):=\R\Gamma_+(\ovk,\R\Gamma^B_{\G}(X,r))
$$
The two syntomic complexes described above are naturally quasi-isomorphic. 
\begin{theorem}
\begin{enumerate}
\item There is a canonical quasi-isomorphism in $\sd^b(\G^{+} )$
$$\R\Gamma_{\G}(X,r)\stackrel{\sim}{\to}\R\Gamma^B_{\G}(X,r), \quad \quad r\in\N.$$
\item There is a  canonical quasi-isomorphism 
$$
\R\Gamma_{\sh}(X,r)\simeq \R\Gamma_{\sh}^B(X,r),\quad r\in\N.
$$
\end{enumerate}
\end{theorem}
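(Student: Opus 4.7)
The plan is to reduce both statements to the equivalence of dg categories $\theta\colon \sd^g(\G^{+})\stackrel{\sim}{\to}\sd^{g}_{\ph}$ from Proposition~\ref{thm1}, by producing a canonical quasi-isomorphism in the ambient category $\sd^g_{\ph}$ of geometric $p$-adic Hodge complexes. By definition $\theta\R\Gamma_{\G}(X,r)=\R\Gamma^g_{\ph}(X,r)$ is assembled from the abstract Hyodo--Kato and de Rham complexes together with the abstract Hyodo--Kato quasi-isomorphism $\iota_{\dr}$, whereas
$$
\theta\R\Gamma^B_{\G}(X,r)=\bigl(\R\Gamma^B_{\hk}(X)^{\tau}_{\B^+},(\R\Gamma^B_{\dr}(X)\otimes_{\ovk}\B^+_{\dr},F^r),\iota^B_{\dr}\bigr)
$$
is built from the chain-level Beilinson models and the chain-level Hyodo--Kato map supplied by the Basic Lemma.

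The second step is to construct the comparison morphism inside $\sd^g_{\ph}$. By functoriality of the Basic Lemma construction there are canonical quasi-isomorphisms $\R\Gamma^B_{\hk}(X)\stackrel{\sim}{\to}\R\Gamma_{\hk}(X)$ in the derived category of $(\phi,N)$-modules over $F^{\nr}$ and $\R\Gamma^B_{\dr}(X)\stackrel{\sim}{\to}\R\Gamma_{\dr}(X)$ in the filtered derived category over $\ovk$, and these intertwine the two avatars of $\iota_{\dr}$. Applying $(-)^{\tau}_{\B^+}$ and $(-)\otimes_{\ovk}\B^+_{\dr}$ and assembling the data yields a morphism
$$
\theta\R\Gamma^B_{\G}(X,r)\longrightarrow\R\Gamma^g_{\ph}(X,r)
$$
in $\sd^g_{\ph}$, and since both of its components $T_0$ and $T_{\dr}$ are quasi-isomorphisms, so is the total map by the criterion recorded after diagram~(\ref{eq:specialization_ph}).

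Applying the inverse equivalence from Proposition~\ref{thm1} then yields a canonical quasi-isomorphism $\R\Gamma^B_{\G}(X,r)\stackrel{\sim}{\to}\R\Gamma_{\G}(X,r)$ in $\sd^g(\G^{+})$, which is~(1). For~(2) one applies $\Hom_{\sd^g_{\ph}}(\un,-)$ to both sides; since $\theta$ carries the unit $\un\in\G^{+}$ to the unit of $\sd^g_{\ph}$ and is fully faithful on geometric complexes, this induces exactly the desired quasi-isomorphism $\R\Gamma^B_{\sh}(X,r)\simeq\R\Gamma_{\sh}(X,r)$, which matches $\R\Gamma_{\synt}(X,r)$ via Theorem~\ref{compsynph}.

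The main obstacle will be the second step: verifying coherently that Beilinson's chain-level Hyodo--Kato isomorphism is compatible, up to homotopy, with the derived Hyodo--Kato map used in defining $\R\Gamma^g_{\ph}(X,r)$. This amounts to checking naturality of Beilinson's construction with respect to the fundamental triangle of log crystalline cohomology, a verification essentially parallel to the one carried out in~\cite{DN} in the arithmetic setting; the only new point in the geometric case is to track compatibility with the trivialization $\R\Gamma_{\hk}(X)^{\tau}_{\B^+}\simeq \R\Gamma_{\hk}(X)\otimes_{F^{\nr}}\B^+$ used throughout Section~\ref{kwak?}, which is straightforward since the monodromy $N$ acts compatibly on both sides.
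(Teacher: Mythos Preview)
Your proposal is correct and follows the same overall strategy as the paper: pass through the equivalence $\theta$ of Proposition~\ref{thm1}, produce a quasi-isomorphism between $\theta\R\Gamma^B_{\G}(X,r)$ and $\R\Gamma^g_{\ph}(X,r)$ in $\sd^g_{\ph}$, and then invoke Theorem~\ref{compsynph} for part~(2).

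There is one difference in execution worth noting. You build the comparison map component-wise, by separately comparing the Hyodo--Kato and de Rham models coming from the Basic Lemma with the abstract ones, and then you must verify that these component comparisons intertwine the two Hyodo--Kato maps --- precisely the coherence check you flag as the ``main obstacle.'' The paper instead applies Beilinson's Basic Lemma directly to the functor $X\mapsto \R\Gamma^g_{\ph}(X,r)$ valued in $\sd^g_{\ph}$, obtaining a single object $\R\Gamma^B_{\ph}(X,r)$ together with a canonical quasi-isomorphism $\kappa_X:\R\Gamma^B_{\ph}(X,r)\simeq \theta\R\Gamma_{\G}(X,r)$; the identification $\R\Gamma^B_{\ph}(X,r)\simeq\theta\R\Gamma^B_{\G}(X,r)$ is then read off on good pairs $(X,Y,j)$, where $\R\Gamma^g_{\ph}((X,Y,j),r)$ is already concentrated in a single degree. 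This packaging makes the Hyodo--Kato compatibility automatic and eliminates the coherence verification you anticipate; otherwise the two arguments coincide.
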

\begin{proof}The second statement follows immediately from the first one and Theorem \ref{compsynph}. 
  To prove the first statement, we follow the proof of Corollary 3.7 from \cite{DN} and just briefly describe the argument here.
  Consider the complex $\R\Gamma^B_{\ph}(X,r)$ in $\sd^g_{\ph}$  defined using Beilinson's  Basic Lemma  starting with  $ \R\Gamma_{\ph}(X,r)$. We note that,  for a good pair $(X,Y,j)$, we have $$\R\Gamma^g_{\ph}((X,Y,j),r)\simeq 
(H^j_{\hk}(X,Y,r)_{\B^+ }^{\tau},(H^j_{\dr}(X,Y)\otimes_{\ovk}\B^+_{\dr}, F^r),
 \iota_{\dr}: H^j_{\hk}(X,Y)^{\tau}_{\B^+ }\otimes_{\B^+ }\B^+_{\dr}\stackrel{\sim}{\to}H^j_{\dr}(X,Y)\otimes_{\ovk}\B^+_{\dr}).$$ Hence $\R\Gamma^B_{\ph}(X,r)$ is isomorphic to $\theta\R\Gamma^B_{\G}(X,r)$. Moreover, we get  a functorial quasi-isomorphism in $\sd^b(\G^{+} )$:  $$\kappa_X: \quad \R\Gamma^B_{\ph}(X,r)\simeq \theta\R\Gamma_{\G}(X,r).
$$ 
It follows that $\R\Gamma^B_{\G}(X,r)\simeq \R\Gamma_{\G}(X,r)$, as wanted. 
\end{proof}
\section{Syntomic  cohomology and  Banach-Colmez spaces}
We will show in this section that syntomic complex (of an algebraic variety) can be realized as a complex of Banach-Colmez spaces. Similarly for the fundamental exact sequence and the syntomic period map: both of them can be canonically lifted to the category of Banach-Colmez spaces. We will also discuss an analog for formal schemes.
\subsection{Syntomic  complex as a complex of Banach-Colmez spaces.}
\subsubsection{Algebraic varieties}
Let $X$ be a variety over $\ovk$. 
\begin{theorem}
\label{BC1}
\begin{enumerate}
\item There exists a complex ${\mathbb R}\Gamma_{\synt}^B(X,r)\in \sd^b(\BC)$ such that ${\mathbb R}\Gamma_{\synt}^B(X,r)(C)=\R\Gamma_{\synt}^B(X,r)$
and the distinguished triangle 
\begin{equation*}
  \R\Gamma_{\synt}(X,r)\to \R\Gamma_{\hk}(X)^{\tau,\phi=p^r}_{\B^+ }\lomapr{\iota_{\dr}} (\R\Gamma_{\dr}(X)\otimes_{\ovk}\B^+_{\dr})/F^r
   \end{equation*}
   can be lifted canonically to a distinguished triangle of  Banach-Colmez  spaces.
   \item The syntomic period map $\rho_{\synt}$ can be lifted canonically to a map of Banach-Colmez  spaces, i.e., we have a map
  \begin{align}
  \label{map1}
  \rho_{\synt}: {\mathbb R}\Gamma^B_{\synt}(X,r)  \to {\mathbb R}\Gamma^B_{\eet}(X,\Qp(r))
  \end{align}
 such that the induced map on $C$-points is the classical syntomic period map.  Here the complex ${\mathbb R}\Gamma^B_{\eet}(X,\Qp(r))$ is a a complex of finite-rank $\Q_p$-vector spaces defined using Beilinson's Basic Lemma. 
 \item We have a canonical  spectral sequence
$$
E_2^{i,j}:=H^i_{\!+}(\ovk, H^j_{\G}(X,r))\Rightarrow H^{i+j}_{\synt}(X,r)
$$
that degenerates at $E_2$. It can be canonically lifted to the category of 
Banach-Colmez spaces. 
 \end{enumerate}
\end{theorem}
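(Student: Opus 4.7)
The plan is to apply the functorial Banach-Colmez enhancement of the period rings from section \ref{kwak2} termwise to the complex $\R\Gamma^B_{\G}(X,r)\in \sd^g(\G^{+} )$ constructed in the previous section, and to check that each classical construction (the mapping fiber defining $\R\Gamma_{\synt}$, the syntomic period map, and the hypercohomology spectral sequence) lifts canonically. For part (1), since $\R\Gamma^B_{\G}(X,r) = (D^{\bullet}, \Lambda^{\bullet})$ is a bounded complex of effective filtered $\phi$-modules, each pair $(D^i, \Lambda^i)$ carries by (\ref{bcspaces}) a canonical BC-valued functor $A \mapsto (D^i(A), \Lambda^i(A))$ on sympathetic algebras $A$; the termwise mapping fiber
\begin{align*}
\mathbb{R}\Hom^+(\un, (D^i,\Lambda^i))(A) := \bigl[D^i(A)^{\phi=1} \longrightarrow D^i(A)\otimes_{\mathbb{B}^+(A)}\mathbb{B}^+_{\dr}(A)/\Lambda^i(A)\bigr]
\end{align*}
then totalizes to a complex $\mathbb{R}\Gamma_{\synt}^B(X,r)$ of finite-dimensional BC spaces (finite-dimensionality follows from section \ref{kwak2}) whose value at $C$ is $\R\Gamma^B_{\synt}(X,r)$ by construction. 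The distinguished triangle lifts automatically because all of its terms and connecting maps admit canonical termwise BC-enhancements.

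For part (2), the period map in (\ref{quasi}) is a composition of four maps each expressible using only the period rings $\B^+_{\st}, \B^+_{\dr}, \B_{\crr}, \B_{\dr}$, the Beilinson comparison quasi-isomorphisms $\rho_{\crr},\rho_{\hk},\rho_{\dr}$, and the fundamental exact sequence $0 \to \Qp(r) \to \B_{\crr}^{\phi=p^r} \to \B_{\dr}/F^r \to 0$. Each period ring extends to its sympathetic-algebra-valued version, the fundamental exact sequence lifts to an exact sequence of Banach-Colmez spaces (cf.\ \cite{CB}), and the comparison maps are visible at the BC level because they tensor Beilinson-Basic-Lemma-level \'etale cohomology (a constant BC functor valued in finite-rank $\Qp$-vector spaces) with these period rings. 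Assembling these pieces should give the desired map $\rho_{\synt}: \mathbb{R}\Gamma^B_{\synt}(X,r) \to \mathbb{R}\Gamma^B_{\eet}(X,\Qp(r))$ in $\sd^b(\BC)$ whose $C$-points recover the classical period map.

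For part (3), the spectral sequence is the hypercohomology spectral sequence for $\R\Hom^+(\un, \R\Gamma^B_{\G}(X,r))$, with $E_2^{i,j} = H^i_+(\ovk, H^j_{\G}(X,r))$ and abutment $H^{i+j}_{\synt}(X,r)$. Degeneration at $E_2$ is immediate from formula (\ref{form}), which shows that $H^i_+(\ovk,M) = 0$ for every effective filtered $\phi$-module $M$ and every $i \geq 2$: every $d_r$ with $r \geq 2$ has target in a column that is identically zero, so only a two-row short exact sequence (matching Lemma \ref{long}) survives. The lift to Banach-Colmez spaces is produced by applying the BC-valued $\mathbb{R}\Hom^+(\un,-)$ of part (1) to each cohomology filtered $\phi$-module $H^j_{\G}(X,r)$, so that every $E_2$-entry is itself a BC space; the spectral sequence in $\sd^b(\BC)$ is then induced by the canonical truncation filtration on $\R\Gamma^B_{\G}(X,r)$, and the exactness of the $C$-points functor from section \ref{kwak2} ensures that the BC spectral sequence reduces to the classical one upon evaluation at $C$.

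The principal technical obstacle will be verifying that the termwise BC-enhancement actually yields complexes of \emph{finite-dimensional} Banach-Colmez spaces, and that cohomology, mapping fibers, and spectral sequences commute with the $C$-points functor. This rests on the geometric nature of $\R\Gamma^B_{\G}(X,r)$ — finiteness of Hyodo-Kato and de Rham cohomology, Hodge--de Rham degeneration, and strictness of differentials — which makes Proposition \ref{BS1} applicable at every step and guarantees that the output is genuinely an object of $\sd^b(\BC)$ rather than merely a complex of BC-valued functors into Banach spaces.
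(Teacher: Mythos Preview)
Your proposal is correct and follows essentially the same approach as the paper. The paper packages part (1) slightly differently---it applies the ready-made functors ${\mathbb X}^r_{\st}(-)$ and ${\mathbb X}^r_{\dr}(-)$ from Section~\ref{kwak2} directly to the complexes $\R\Gamma^B_{\hk}(X)$ and $\R\Gamma^B_{\dr}(X)$ rather than going through $(D^\bullet,\Lambda^\bullet)$ and the enhancement (\ref{bcspaces})---but this is cosmetic, and parts (1) and (3) match the paper's argument closely.

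One point worth sharpening in part (2): the intermediate terms in the zig-zag (\ref{quasi}) involve the full period rings $\B_{\st}$, $\B_{\crr}$, $\B_{\dr}$ (not just the plus-versions), and the corresponding functors $(D\otimes_{F^{\nr}}\B_{\st})^{\phi=p^r,N=0}$, $(D_K\otimes_K\B_{\dr})/F^r$ are only \emph{Ind}-Banach-Colmez spaces, not finite-dimensional ones. The paper is explicit about this, observing that the intermediate complexes are $C$-points of complexes of Ind-Banach-Colmez spaces and that the comparison isomorphisms for a pst-pair $(D,V)$ lift to that Ind-category; the composite map then lands in the finite-dimensional subcategory because its source and target do. Your outline is correct but glosses over this passage through Ind-$\BC$.
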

\begin{proof}
 For the first claim, define
  \begin{equation}
  \label{sp}
  \R\Gamma_{\synt}^B(X,r)=[\R\Gamma^B_{\hk}(X)^{\tau,\phi=p^r}_{\B^+ }\lomapr{\iota_{\dr}} (\R\Gamma^B_{\dr}(X)\otimes_{\ovk}\B^+_{\dr})/F^r]
   \end{equation}
   By Section \ref{kwak2}, 
   \begin{align*}
   \R\Gamma^B_{\hk}(X)^{\tau,\phi=p^r}_{\B^+ } & ={\mathbb X}^r_{\st}( \R\Gamma^B_{\hk}(X))(C),\\
   (\R\Gamma^B_{\dr}(X)\otimes_{\ovk}\B^+_{\dr})/F^r &  ={\mathbb X}^r_{\dr}( \R\Gamma^B_{\dr}(X))(C),
    \end{align*}  
    and the map $\iota_{\dr}$ can be lifted canonically to a map between complexes of Banach-Colmez  spaces. We can now define the following complex of Banach-Colmez spaces
    \begin{align}
    \label{sp1}
    {\mathbb R}\Gamma_{\synt}^B(X,r) :=[{\mathbb X}^r_{\st}( \R\Gamma^B_{\hk}(X))\lomapr{\iota_{\dr}}{\mathbb X}^r_{\dr}( \R\Gamma^B_{\dr}(X))].   
    \end{align}
   We have $ {\mathbb R}\Gamma_{\synt}^B(X,r)(C)= \R\Gamma_{\synt}^B(X,r)$ and hence $H^*({\mathbb R}\Gamma_{\synt}^B(X,r))(C)=H^i\R\Gamma_{\synt}^B(X,r)$.

   To define the lift (\ref{map1}), it suffices to inspect the sequence of quasi-isomorphisms (\ref{quasi}) defining the syntomic period map and to 
 \begin{enumerate}
 \item replace the complexes $\R\Gamma_{\hk}(X)$, $\R\Gamma_{\dr}(X)$, and $\R\Gamma_{\eet}(X,\Qp)$ by their analogs $\R\Gamma^B_{\hk}(X)$, $\R\Gamma^B_{\dr}(X)$, and 
 $\R\Gamma^B_{\eet}(X,\Qp)$ defined using Beilinson's Basic Lemma;
 \item note that $(\R\Gamma^B_{\hk}(X)\otimes_{F^{\nr}}\B_{\st})^{\phi=p^r,N=0}$ and  $(\R\Gamma^B_{\dr}(X)\otimes_{\ovk}\B_{\dr})/F^r$ are $C$-points of complexes of  Ind-Banach-Colmez spaces; and similarly for $\R\Gamma^B_{\eet}(X,\Qp)\otimes_{\Qp}\B_{\st}^{\phi=p^r,N=0}$ and $\R\Gamma^B_{\eet}(X,\Qp)\otimes_{\Qp}\B_{\dr}/F^r$;
 \item note that, for a pst-pair $(D,V)$, the period isomorphisms
 $$(D\otimes_{F^{\nr}}\B_{\st})^{\phi=p^r,N=0}\simeq V\otimes_{\Q_p}\B_{\st}^{\phi=p^r,N=0},\quad (D_K\otimes_K\B_{\dr})/F^r\simeq V\otimes_{\Q_p}\B_{\dr}/F^r
 $$
 can be lifted canonically  to the category of Ind-Banach-Colmez spaces.
\end{enumerate}
  The spectral sequence in the third  claim is constructed from (\ref{sp}) and (\ref{sp1}) and uses the computations of extensions in effective filtered $\phi$-modules from (\ref{form}).
 
\end{proof}
  \subsubsection{ Syntomic Euler characteristic}    Let $X$ be a variety over $\ovk$.  By Section \ref{kwak2}, the syntomic Euler characteristic of $X$ is equal to
\begin{align*}
\chi({\mathbb R}\Gamma^B_{\synt}(X,r)) & =\sum_{i\geq 0}(-1)^i\dim {\mathbb X}^r_{\rm st}(D^i)-\sum_{i\geq 0}(-1)^i \dim {\mathbb X}^r_{\rm dR}(D^i)\\
 & =\sum_{i\geq 0}(-1)^i \sum_{r^{\prime}_i\leq r}(r-r^{\prime}_i,1)-\sum_{i\geq 0}(-1)^i (r\dim_{\ovk}D^i_{\ovk}-\sum_{j=1}^r\dim F^jD^i_{\ovk},0),\\
 \end{align*}
 where $D^i=(D^i,D^i_{\ovk})=(H^i_{\hk}(X),H^i_{\dr}(X))$. For $r$ large enough this stabilizes. That is, 
 if $F^{r+1}D^i_{\ovk}=0$, $i\geq 0$,  and if all $r^{\prime}_i$'s are $\leq r$, then
\begin{align*}
\chi({\mathbb R}\Gamma^B_{\synt}(X,r))
 & =\sum_{i\geq 0}(-1)^i (r\dim_{F^{\nr}}D^i-t_N(D^i),\dim_{F^{\nr}}D^i)-\sum_{i\geq 0}(-1)^i (r\dim_KD^i_{K}-t_H(D^i_{K}),0)\\
 &=\sum_{i\geq 0}(-1)^i(t_H(D^i_{K} )-t_N(D^i),  \dim_{F^{\nr}}D^i)=(\sum_{i\geq 0}(-1)^i(t_H(D^i_{K} )-t_N(D^i)),  \chi(\R\Gamma_{\dr}(X))).
 \end{align*}
 Since the filtered $(\phi,N,G_K)$-module $D^i=(D^i,D^i_{\ovk})$ is admissible, we have $t_N(D^i)=t_H(D^i_{K})$ and $\chi({\mathbb R}\Gamma^B_{\synt}(X,r))=(0,\chi(\R\Gamma_{\dr}(X)))$.
 \subsubsection{Formal schemes}Part of Theorem \ref{BC1} has an analog for formal schemes. 
 Let $\sx$ be a quasi-compact ($p$-adic) formal scheme over $\so_K$ with strict semistable reduction. As shown in \cite{CN}, for $r\geq 0$, one can associate to $\sx$ a functorial syntomic cohomology complex
 $\R\Gamma_{\synt}(\sx_{\ovk},r)$ defined be a formula analogous to (\ref{analog1}).  We see $\sx$ as a log-scheme and the Hyodo-Kato and the de Rham cohomologies used are logarithmic. 
 \begin{proposition}Assume that $\sx$ is proper.
 \begin{enumerate}
 \item 
 Then there exists a complex ${\mathbb R}\Gamma_{\synt}(\sx_{\ovk},r)\in \sd^b(\BC)$ such that ${\mathbb R}\Gamma_{\synt}(\sx_{\ovk},r)(C)=\R\Gamma_{\synt}(\sx_{\ovk},r)$
and the distinguished triangle 
\begin{equation*}
  \R\Gamma_{\synt}(\sx_{\ovk},r)\to \R\Gamma_{\hk}(\sx)^{\tau,\phi=p^r}_{\B^+ }\lomapr{\iota_{\dr}} (\R\Gamma_{\dr}(\sx_{\ovk})\otimes_{\ovk}\B^+_{\dr})/F^r
   \end{equation*}
   can be lifted canonically to a distinguished triangle of  Banach-Colmez  spaces.
  \item We have a canonical  spectral sequence
$$
E_2^{i,j}:=H^i_{\!+}(\ovk, H^j_{\G}(\sx_{\ovk},r))\Rightarrow H^{i+j}_{\synt}(\sx_{\ovk},r)
$$
that degenerates at $E_2$. It can be canonically lifted to the category of Banach-Colmez spaces. 
 \end{enumerate}
 \end{proposition}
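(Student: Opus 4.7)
The approach is to mimic the proof of Theorem \ref{BC1} term by term, replacing the algebraic Hyodo--Kato and de Rham inputs by the logarithmic ones attached to $\sx$. First I would invoke the results of \cite{CN}: since $\sx$ is proper with strict semistable reduction, $\R\Gamma_{\hk}(\sx)$ is represented by a perfect complex of finite rank $(\phi,N)$-modules over $F^{\nr}$, $\R\Gamma_{\dr}(\sx_{\ovk})$ is represented by a perfect complex of filtered $\ovk$-vector spaces with finite-dimensional cohomology, and these are compatible via a Hyodo--Kato quasi-isomorphism $\iota_{\dr}$. Moreover the logarithmic Hodge--de Rham spectral sequence for $\sx_{\ovk}$ degenerates, so the strictness argument of Section \ref{kwak?} shows that $F^r(\R\Gamma_{\dr}(\sx_{\ovk})\otimes_{\ovk}\B^+_{\dr})$ is a strict perfect complex of $\B^+_{\dr}$-lattices.

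Next, I would apply the exact Banach--Colmez functors $\mathbb{X}^r_{\st}$ and $\mathbb{X}^r_{\dr}$ from Section \ref{kwak2} termwise to these complexes. Since the inputs are finite-dimensional in each degree, this yields complexes in $\sd^b(\BC)$ whose $C$-points recover $\R\Gamma_{\hk}(\sx)^{\tau,\phi=p^r}_{\B^+}$ and $(\R\Gamma_{\dr}(\sx_{\ovk})\otimes_{\ovk}\B^+_{\dr})/F^r$, respectively. The Hyodo--Kato map lifts canonically to a morphism between them, so I define
$$
{\mathbb R}\Gamma_{\synt}(\sx_{\ovk},r) := [\mathbb{X}^r_{\st}(\R\Gamma_{\hk}(\sx)) \lomapr{\iota_{\dr}} \mathbb{X}^r_{\dr}(\R\Gamma_{\dr}(\sx_{\ovk}))]
$$
as the homotopy fiber in $\sd^b(\BC)$. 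Exactness of the $C$-points functor $\BC\to\Banach$ then recovers the classical syntomic complex on taking $C$-points via the formal-scheme analog of (\ref{analog1}); this proves part (1).

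For part (2), I would package
$$
\R\Gamma_{\G}(\sx_{\ovk},r) := (\R\Gamma_{\hk}(\sx)^{\tau}_{\B^+},\,(\R\Gamma_{\dr}(\sx_{\ovk})\otimes_{\ovk}\B^+_{\dr},F^r),\,\iota_{\dr})
$$
as a complex of effective filtered $\phi$-modules over $\ovk$, and observe, exactly as in the proof of Theorem \ref{BC1}(3), that the formulas (\ref{form}) for $H^i_{+}(\ovk,-)$ lift termwise to $\BC$ via the constructions (\ref{bcspaces}). Since $H^i_{+}(\ovk,-)$ vanishes for $i\geq 2$, the hypercohomology spectral sequence of $\R\Gamma_{+}(\ovk,\R\Gamma_{\G}(\sx_{\ovk},r))$ has only two potentially nonzero rows and so degenerates at $E_2$; the entire spectral sequence inherits a canonical Banach--Colmez lift.

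The main obstacle I expect is the first step: guaranteeing that the logarithmic de Rham complex of a proper semistable formal scheme admits a strict filtered representative with torsion-free, finite-dimensional cohomology. In the algebraic setting this was handled by Beilinson's Basic Lemma applied to ss-pairs; for formal schemes one must instead combine the logarithmic Hodge--de Rham degeneration in the proper semistable case with the finiteness theorems of \cite{CN}. Once this strictness and finiteness are in place, the Banach--Colmez functors $\mathbb{X}^r_{\st}$ and $\mathbb{X}^r_{\dr}$ apply verbatim and the argument runs in parallel with the algebraic case; note, however, that no analog of the period-map lift (part (2) of Theorem \ref{BC1}) is claimed here, which would require an étale comparison theorem for proper semistable formal schemes.
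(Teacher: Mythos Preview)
Your approach is correct and closely parallels the paper's, but the paper supplies one mechanism you leave implicit. You propose to apply $\mathbb{X}^r_{\st}$ and $\mathbb{X}^r_{\dr}$ termwise to ``perfect complex representatives'' of $\R\Gamma_{\hk}(\sx)$ and $\R\Gamma_{\dr}(\sx_{\ovk})$, citing \cite{CN}; the paper instead packages the data as a geometric $p$-adic Hodge complex $\R\Gamma^g_{\ph}(\sx_{\ovk},r)$, applies the equivalence $\theta$ of Proposition~\ref{thm1}, and then invokes formula~(\ref{correction}) (the splitting of any geometric complex as the direct sum of its cohomology filtered $\phi$-modules) to produce an explicit termwise-finite complex in $C^b(\G^+)$ to which (\ref{bcspaces}) applies. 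This splitting is precisely the substitute for Beilinson's Basic Lemma that you correctly anticipate is needed, and it is what makes the Hyodo--Kato map a genuine map of complexes rather than a zig-zag. One further correction: the Hodge--de Rham degeneration for proper smooth (rigid/formal) schemes is cited from Scholze \cite[Theorem 8.4]{Sch}, not from \cite{CN}.
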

 \begin{proof}
 Consider the following complex in $\sd^{g}_{\ph}$
$$
\R\Gamma^g_{\ph}(\sx_{\ovk},r):=( \R\Gamma_{\hk}(\sx,r)_{\B^+}^{\tau}, (\R\Gamma_{\dr}(\sx_{\ovk})\otimes_{\ovk}\B^+_{\dr},F^r), \iota_{\dr}:\R\Gamma_{\hk}(\sx)_{\B^+}^{\tau}\otimes_{\B^+}\B^+_{\dr}\stackrel{\sim}{\to}\R\Gamma_{\dr}(\sx_{\ovk})\otimes_{\ovk}\B^+_{\dr}),
$$
 where the $r$-twist in $\R\Gamma_{\hk}(\sx,r)_{\B^+}^{\tau} $ refers to Frobenius divided by $p^r$. The $p$-adic Hodge complex $\R\Gamma^g_{\ph}(\sx_{\ovk},r)$ is geometric: the explanation at the beginning of Section \ref{kwak?} goes through once we have the degeneration of the Hodge-de Rham spectral sequence and this was proved in \cite[Theorem 8.4]{Sch}.
Set 
\begin{align*}
\R\Gamma_{\G}(\sx_{\ovk},r) & :=\theta^{-1}\R\Gamma^g_{\ph}(\sx_{\ovk},r)\in\sd^g(\G^{+} ).
\end{align*}
We set $H^i_{\G}(\sx_{\ovk},r):=H^i\R\Gamma_{\G}(\sx_{\ovk},r)$.

  By Proposition \ref{thm1}, we have 
$$\Hom_{\sd^g_{\ph}}(\un,\R\Gamma^g_{\ph}(\sx_{\ovk},r))\simeq  \Hom_{\sd^b(\G^{+} )}(\un,\R\Gamma_{\G}(\sx_{\ovk},r)).
$$
Since    \begin{align*}
\R\Gamma_{\synt}(\sx_{\ovk},r)
  & \stackrel{\sim}{\to}
  [\xymatrix{\R\Gamma_{\hk}(\sx)_{\B^+}^{\tau,\phi=p^r}\oplus F^r(\R\Gamma_{\dr}(\sx_{\ovk})\otimes_{\ovk}\B^+_{\dr})  \ar[rr]^-{\iota_{\dr}\otimes\iota-\can} & &  \R\Gamma_{\dr}(\sx_{\tr})\otimes_{\ovk}\B^+_{\dr}}]\\
   & \simeq \Hom_{\sd^g_{\ph}}(\un,\R\Gamma^g_{\ph}(\sx_{\ovk},r)),
    \end{align*}
we get that 
 \begin{align*}
\R\Gamma_{\synt}(\sx_{\ovk},r)\simeq 
 \Hom_{\sd^b(\G^{+} )}(\un,\R\Gamma_{\G}(\sx_{\ovk},r)).
\end{align*}
The first claim of our proposition follows now from formula (\ref{correction}), Proposition \ref{comp1}, and  (\ref{bcspaces}). The second -- just as in the proof of Theorem \ref{BC1}.
 \end{proof}
\subsection{Identity component of syntomic cohomology}We will show that the image of the syntomic cohomology Banach-Colmez space in the \'etale cohomology is the $\Q_p$-vector space of its connected components. 
\begin{proposition}For a variety $X\in  {\mathcal V}ar_{\ovk}$ and $i\geq 0$, we have the natural isomorphisms
$$ \ker(\rho^i_{\synt})\simeq H^1_{+}(\ovk, H^{i-1}_{\G}(X,r)),\quad \im(\rho^i_{\synt})\simeq H^0_{+}(\ovk, H^{i}_{\G}(X,r)),
$$
where $\rho^i_{\synt}: H^i_{\synt}(X,r)\to H^i_{\eet}(X,\Q_p(r))$ is the syntomic period map.
\end{proposition}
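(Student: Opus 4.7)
The plan is to realize both $H^i_{\synt}(X,r)$ and $H^i_{\eet}(X,\Qp(r))$ as cohomology of analogous mapping fibers, differing only by the passage from the ``plus'' period rings $\B^+,\B^+_{\dr}$ to the full rings $\B_{\st},\B_{\dr}$, and to identify $\rho^i_{\synt}$ as the map induced by this inclusion.

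First I would invoke Lemma \ref{long} to record the short exact sequence
\begin{equation*}
0\to H^1_+(\ovk, H^{i-1}_{\G}(X,r))\to H^i_{\synt}(X,r)\to H^0_+(\ovk, H^i_{\G}(X,r))\to 0,
\end{equation*}
in which, by (\ref{form}), the $H^1_+$-term is $\coker(\gamma_{i-1})$ and the $H^0_+$-term is $\ker(\gamma_i)$, with $\gamma_j:(H^j_{\hk}(X)\otimes_{F^{\nr}}\B^+)^{\phi=p^r}\to (H^j_{\dr}(X)\otimes_{\ovk}\B^+_{\dr})/F^r$.

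Next I would unfold the definition (\ref{quasi}) of $\rho_{\synt}$ to observe that, at the level of complexes, the period map is induced by the inclusions $\B^+\hookrightarrow\B_{\st}$ and $\B^+_{\dr}\hookrightarrow\B_{\dr}$, followed by Beilinson's comparison isomorphism. Under this presentation, $\R\Gamma_{\eet}(X,\Qp(r))$ is identified with the mapping fiber
\begin{equation*}
[(\R\Gamma_{\hk}(X)\otimes_{F^{\nr}}\B_{\st})^{\phi=p^r,N=0}\lomapr{\iota_{\dr}}(\R\Gamma_{\dr}(X)\otimes_{\ovk}\B_{\dr})/F^r],
\end{equation*}
yielding an analogous exact sequence whose terms are $\coker(\gamma^{\et}_{i-1})$ and $\ker(\gamma^{\et}_i)$, where $\gamma^{\et}_j$ is the full-ring counterpart of $\gamma_j$. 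The key input is that $\gamma^{\et}_j$ is surjective for every $j$: since $X$ descends to a variety $X_L$ over some finite extension $L/K$, the pair $(H^j_{\hk}(X_L),H^j_{\dr}(X_L))$ is an admissible filtered $(\phi,N,G_L)$-module, so tensoring the fundamental exact sequence $0\to\Qp(r)\to\B_{\crr}^{\phi=p^r}\to\B_{\dr}/F^r\to 0$ with $H^j_{\eet}(X,\Qp)$ and invoking the $\B_{\st}$- and $\B_{\dr}$-comparison isomorphisms produces
\begin{equation*}
0\to H^j_{\eet}(X,\Qp(r))\to (H^j_{\hk}(X)\otimes_{F^{\nr}}\B_{\st})^{\phi=p^r,N=0}\lomapr{\iota_{\dr}}(H^j_{\dr}(X)\otimes_{\ovk}\B_{\dr})/F^r\to 0.
\end{equation*}
In particular $\coker(\gamma^{\et}_j)=0$ and $\ker(\gamma^{\et}_j)\simeq H^j_{\eet}(X,\Qp(r))$ for every $j\geq 0$.

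Assembling these, the period map $\rho^i_{\synt}$ fits into a morphism of short exact sequences in which the left vertical map $\alpha:\coker(\gamma_{i-1})\to 0$ is automatically zero and the right vertical map $\beta:\ker(\gamma_i)\to\ker(\gamma^{\et}_i)$ is injective, because $\B^+\hookrightarrow\B_{\st}$ is injective and $N$ acts trivially on $\B^+$. A short diagram chase then yields
\begin{equation*}
\ker(\rho^i_{\synt})\simeq\coker(\gamma_{i-1})\simeq H^1_+(\ovk, H^{i-1}_{\G}(X,r)),\quad \im(\rho^i_{\synt})\simeq\ker(\gamma_i)\simeq H^0_+(\ovk, H^i_{\G}(X,r)),
\end{equation*}
as required. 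The main obstacle is the compatibility check for the period map: tracing through the chain of quasi-isomorphisms (\ref{quasi}) defining $\rho_{\synt}$ -- in particular the Hyodo-Kato trivialization $\R\Gamma_{\hk}(X)^{\tau}_{\B^+}\simeq\R\Gamma_{\hk}(X)\otimes_{F^{\nr}}\B^+$ and the final invocation of the fundamental exact sequence -- in order to verify that, on the syntomic mapping fiber, $\rho_{\synt}$ is genuinely induced by the plain inclusion of period rings. Once this book-keeping is carried out, the rest is formal.
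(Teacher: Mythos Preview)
Your approach is essentially the same as the paper's. The paper packages the argument as a single commutative diagram with the syntomic fundamental sequence on top and the \'etale fundamental short exact sequence $0\to H^i_{\eet}(X,\Qp(r))\to H^i_{\eet}(X,\Qp)\otimes\B_{\crr}^{\phi=p^r}\to H^i_{\eet}(X,\Qp)\otimes\B_{\dr}/F^r\to 0$ on the bottom, then reads off that $\rho^i_{\synt}$ factors through $H^0_+(\ovk,H^i_{\G}(X,r))$ and that the latter injects into $H^i_{\eet}(X,\Qp(r))$; your morphism of short exact sequences is the same diagram chase, just reorganized.
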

\begin{proof}
   Consider the following commutative diagram
$$
\xymatrix{
\ar[r] & H^i_{\synt}(X,r)\ar[r]\ar[d]^{\rho^i_{\synt}} & H^i_{\crr}(X)^{\phi=p^r}\ar[r]^-{\gamma_i}\ar[d]^{\rho^i_{\crr}}  & (H^i_{\dr}(X)\otimes_{\ovk}\B_{\dr}^+)/F^r\ar[d]^{\rho^i_{\dr}}\ar[r] &\\
0\ar[r]  & H^i_{\eet}(X,\Qp(r))\ar[r] & H^i_{\eet}(X,\Qp)\otimes \B_{\crr}^{\phi=p^r}\ar[r]^{} & H^i_{\eet}(X,\Qp)\otimes_{\ovk}\B_{\dr}/F^r \ar[r] & 0
 }
 $$
 It follows that $\rho^i_{\synt}$ factors through $H^0_{+}(\ovk,H^i_{\G}(X,r))$ and that $H^0_{+}(\ovk, H^i_{\G}(X,r))\hookrightarrow H^i_{\eet}(X,\Qp(r))$. Hence $H^1_{+}(\ovk,H^{i-1}_{\G}(X,r))\simeq \ker(\rho^i_{\synt})$ and $H^0_{+}(\ovk, H^{i}_{\G}(X,r))\simeq \im(\rho^i_{\synt})$, as wanted.
 \end{proof}
 \begin{corollary}
 \label{long1}
 For $i\leq r$ or $r\geq d$, there are natural isomorphisms
$$
H^i_{\synt}(X,r) \stackrel{\sim}{\to } H^0_{+}(\ovk,H^{i}_{\G}(X,r))\lomapr{\rho^i_{\synt}} H^i_{\eet}(X,\Q_p(r)).
$$
 \end{corollary}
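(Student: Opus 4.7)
The plan is to deduce this corollary directly from the preceding proposition, Lemma \ref{long}, and Proposition \ref{NN}. First I invoke the preceding proposition to realize $\rho^i_{\synt}$ as the canonical factorization
\begin{equation*}
H^i_{\synt}(X,r)\twoheadrightarrow H^0_{+}(\ovk,H^{i}_{\G}(X,r))\hookrightarrow H^i_{\eet}(X,\Q_p(r)),
\end{equation*}
whose kernel is identified with $H^1_{+}(\ovk, H^{i-1}_{\G}(X,r))$. The corollary thus reduces to showing, in each of the two stated ranges, that this kernel vanishes (so that the first arrow becomes an iso) and that the image fills all of $H^i_{\eet}(X,\Q_p(r))$ (so that the second arrow becomes an iso).

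In the range $i\leq r$ the claim is immediate: Proposition \ref{NN} already gives that $\rho^i_{\synt}$ is an isomorphism, so the surjective and injective pieces of its factorization are automatically both isomorphisms. In the range $r\geq d$, where $d=\dim X$, I would observe that $F^{r+1}H^j_{\dr}(X)=0$ for every $j$, and then argue exactly as at the end of the proof of Lemma \ref{long} (descend $X$ to a finite extension $L/K$, and then invoke the standard $p$-adic comparison theorem for the filtered $(\phi,N,G_L)$-module $(H^j_{\hk}(X_L), H^j_{\dr}(X_L))$) to conclude that each $H^j_{\G}(X,r)$ is admissible. Admissibility of $H^{i-1}_{\G}(X,r)$ forces $H^1_{+}(\ovk, H^{i-1}_{\G}(X,r))=0$, while admissibility of $H^i_{\G}(X,r)$ identifies $H^0_{+}(\ovk, H^i_{\G}(X,r))$ with the underlying $G_K$-representation, which via the crystalline comparison is precisely $H^i_{\eet}(X,\Q_p(r))$.

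The main obstacle is the $r\geq d$ half: while the vanishing of $H^1_+$ follows mechanically from admissibility (since admissible modifications correspond to semistable bundles of slope $0$, whose $H^1$ on the Fargues--Fontaine curve is trivial), the surjectivity of the second arrow genuinely requires recalling the dictionary between admissible filtered $\phi$-modules over $\ovk$ and their étale realizations, as assembled in Section \ref{kwak2}. Once that dictionary is in hand the two ranges are treated uniformly, and the stated composition of natural isomorphisms follows.
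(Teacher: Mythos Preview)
Your argument is correct and very close to the paper's. The paper organizes the proof by the two arrows rather than by the two ranges: it quotes Lemma~\ref{long} for the first isomorphism $H^i_{\synt}(X,r)\stackrel{\sim}{\to}H^0_+(\ovk,H^i_{\G}(X,r))$ (valid for $i\leq r+1$ or $r\geq d$), and for the second arrow it combines the preceding proposition with the admissibility of $(H^i_{\hk}(X),H^i_{\dr}(X))$ and \cite[Prop.~2.10]{NN}, treating both ranges uniformly. Your shortcut for $i\leq r$, invoking Proposition~\ref{NN} to conclude that the composite $\rho^i_{\synt}$ is an isomorphism and hence so are both pieces of its (surjection)$\circ$(injection) factorization, is a legitimate and slightly slicker variant; your treatment of $r\geq d$ is exactly the paper's.
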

 \begin{proof}
 The first isomorphism follows from Lemma \ref{long}. The second, from the above proposition and the fact that, for $i\leq r$ or $r\geq d$, the pair $(H^i_{\hk}(X),H^i_{\dr}(X))$ comes from  an admissible filtered $(\phi,N,G_L)$-module (for a finite extension $L/K$) hence we can revoke \cite[Prop. 2.10]{NN}.
 \end{proof}
 The exact sequence of Banach-Colmez spaces
 $$
 0 \to H^1_{+}(\ovk, H^{i-1}_{\G}(X,r)) \to H^i{\mathbb R}\Gamma_{\synt}^B(X,r)\to H^0_{+}(\ovk, H^{i}_{\G}(X,r)) \to 0
 $$
 has the following interpretation. 
 The Banach-Colmez space $H^1_{+}(\ovk, H^{i-1}_{\G}(X,r))$ is connected (as a quotient of the connected Space ${\mathbb X}^r_{\dr}(H^{i-1}_{\dr}(X))$) and the Space $H^0_{+}(\ovk, H^{i}_{\G}(X,r))$ is a finite rank $\Q_p$-vector space. Hence $H^1_{+}(\ovk, H^{i-1}_{\G}(X,r))$ is the identity  component of $H^i{\mathbb R}\Gamma_{\synt}^B(X,r)$ and $H^0_{+}(\ovk, H^{i}_{\G}(X,r))=\pi_0(H^i{\mathbb R}\Gamma_{\synt}^B(X,r))$. The projection of Banach-Colmex spaces $H^i{\mathbb R}\Gamma_{\synt}^B(X,r)\to H^0_{+}(\ovk, H^{i}_{\G}(X,r))$ has a noncanonical section.
 \section{Computations}In this section we will present several computations of  geometric syntomic cohomology groups. They show that these groups yield interesting invariants of algebraic varieties. We do not, however,  understand what is going on. 
 \subsection{Ordinary varieties}We will show that geometric syntomic cohomology of ordinary varieties is a finite rank $\Q_p$-vector space. For varying twists $r$, it yields a Galois equivariant filtration of the \'etale cohomology. 
 
  Recall the following terminology from \cite{PR}. 
 A $p$-adic $G_K$-representation $V$  is called {\em ordinary} if it admits an equivariant increasing filtration $
  V_j, j\in \Z$, such that an open subgroup of the inertia group acts on $ V_j/V_{j-1}$ by $\chi^{-j}$, $\chi$ being the cyclotomic character.
  
 A filtered $(\phi,N,G_K)$-module is called {\em ordinary} if its Newton and Hodge polygons agree.  Such a module admits an increasing filtration by filtered $(\phi,N,G_K)$-submodules, 
 $D_j, j\in \Z$, such that $D_j/D_{j-1}=D_{[-j]}$. Here we set $D_{[-j]}:=(D_{[-j]},D_{[-j],\ovk})$, where $D_{[-j]}=D\cap (D\otimes_{F^{\nr}}W(\overline{k}))_{[-j]}$ for $(D\otimes_{F^{\nr}}W(\overline{k}))_{[-j]}$ --  a submodule of $D\otimes_{F^{\nr}}W(\overline{k})$ generated by $x$ such that $\phi(x)=p^{-j}x$. It is equipped with the trivial monodromy and  the filtration on $D_{[-j],K}:=D_{[-j],\ovk}^{G_K}$ is given by $F^{-j}D_K=D_K, F^{-j+1}D_K=0$. The categories of ordinary Galois representations and ordinary filtered $(\phi,N,G_K)$-modules are equivalent. We have $V_{\pst}(D_{[-j]})\simeq V_{-j}$. 
 
      A variety $X$ over $K$  is {\em ordinary} if so are  the pairs $(H^i_{\hk}(X),H^i_{\dr}(X))$, $i\geq 0$.   A variety $X$ over $\ovk$  is {\em ordinary} if it comes from an ordinary variety defined over a finite extension of $K$.
  \begin{example}
  \label{example-ordinary}For $r\geq 0$ and  $i\geq 0$, 
  we have $$
  H^i_{\synt}(X,r)=H^0_+(\ovk,H^i_{\G}(X,r))\stackrel{\sim}{\to}  H^i_{\eet}(X,\Q_p)_{r}(r)\hookrightarrow  H^i_{\eet}(X,\Q_p(r)).
  $$
  In particular,  it is a finite rank $\Q_p$-vector space. 
  \end{example}
  \begin{proof}
  For $k\geq 0$, let $D=F^{\nr}e, \phi(e)=p^ke, F^kD_{K}=D_{K}, F^{k+1}D_{K}=0$. Equip it with the trivial action of the monodromy and  $G_K$. The pair $(D,D_{\ovk})$ forms an admissible filtered $(\phi,N, G_K)$-module  whose associated Galois representation is $\Q_p(-k)$.

   If $r <k$, we have
   $$
   (D\otimes_{F^{\nr}} \B^+_{\crr})^{\phi=p^r}=   ( \B^+_{\crr})^{\phi=p^{r-k}}=0,\quad 
    (D\otimes_{F^{\nr}}\B^+_{\dr})/F^r=    \B^+_{\dr}/\B^+_{\dr}=0.
   $$
   Hence $H^0_+(\ovk,D)=H^1_+(\ovk,D)=0$. 
   
   If $r\geq k$, we have
   $$
      [(D\otimes_{F^{\nr}} \B^+_{\crr})^{\phi=p^r}\to  (D\otimes_{F^{\nr}}\B^+_{\dr})/F^r]=
      [(\B^+_{\crr})^{\phi=p^{r-k}}\to \B^+_{\dr}/F^{r-k}]\stackrel{\sim}{\leftarrow}\Q_p(r-k).
      $$  
      Hence $H^0_+(\ovk,D)=\Q_p(r-k)$ and $H^1_+(\ovk,D)=0$. 
      
  By devissage, it follows that, for any ordinary filtered $(\phi,N,G_K)$-module $D$, we have $H^1_+(\ovk,D)=0$ and $H^0_+(\ovk,D)\stackrel{\sim}{\to}V_{\st}(D_r)(r)=V_{r}(r)$, as wanted.
  \end{proof}
  We note that in the \'etale cohomology group $H^i_{\eet}(X,\Q_p)$ we see the cyclotomic characters $\chi^j, 0\geq j\geq -i$; hence in the group $H^i_{\eet}(X,\Q_p(r))$ - the characters $\chi^j,r\geq j\geq  -i+r$. 
  The above  example shows that syntomic cohomology $H^i_{\synt}(X,r)$, $r\geq 0$, picks   up the twists $\Q_p(j)$, for $r\geq  j\geq 0$, in the \'etale cohomology $H^i_{\eet}(X,\Q_p(r))$. If $r$ is large, $r\geq i$, we pick up the whole \'etale cohomology group.

\subsection{Curves}
 Let $X$ be a proper smooth curve over $\ovk$. We will show that the geometric syntomic cohomology of $X$ recovers  the " \'etale  $p$-rank of the special fiber of the N\'eron model of the Jacobian of the curve".  
 
  By Lemma \ref{long}, 
 $$
 H^i_{\synt}(X,r)=H^0_+(\ovk,H^i_{\G}(X,r)), \quad r\geq 1, i\leq r+1,
 $$
 is a finite rank vector space over $\Q_p$ that is a subspace of $H^i_{\eet}(X,\Q_p(r))$. Moreover,  
$$
 \rho_{\synt}^i: H^i_{\synt}(X,r)\stackrel{\sim}{\to}H^i_{\eet}(X,\Q_p(r)), \quad r\geq 1, i\leq r.
 $$
 For $r=0$,   we have 
 \begin{align*}
 H^0_{\synt}(X,0) & \stackrel{\sim}{\to}H^0_{\eet}(X,\Q_p(0))\simeq \Q_p.
 \end{align*}
 Since $F^0(H^i_{\dr}(X)\otimes\B^+_{\dr})= H^i_{\dr}(X)\otimes\B^+_{\dr}$, we have 
 \begin{align*} 
   H^i_{\synt}(X,0)\simeq (H^i_{\hk}(X)\otimes_{F^{\nr}}\B^+_{\crr})^{\phi=1}  \hookrightarrow H^i_{\eet}(X,\Q_p(0)).  
    \end{align*}
 Since $H^{2}_{\hk}(X)\simeq F^{\nr}$ with Frobenius $p\phi$, we have  $H^2_{\synt}(X,0)=0$ since
  $$
 H^2_{\synt}(X,0)\hookrightarrow (\B^+_{\crr})^{\phi=p^{-1}}=0.
 $$

   It remains to understand the inclusion $  H^1_{\synt}(X,0) \hookrightarrow H^1_{\eet}(X,\Q_p)$. Since the Frobenius slopes on $H^1_{\hk}(X)$ are $\geq 0$, we have 
   $$
   H^1_{\synt}(X,0)\simeq (H^1_{\hk}(X)\otimes_{F^{\nr}}\B^+_{\crr})^{\phi=1}=(H^1_{\hk}(X)_{[0]} \otimes_{F^{\nr}}\B^+_{\crr})^{\phi=1}= (H^1_{\hk}(X)_{[0]}\otimes_{F^{\nr}}W(\overline{k}))^{\phi=1}.
   $$
  This is a $\Q_p$-vector space, of the same rank as the $F^{\nr}$-rank of $H^1_{\hk}(X)_{[0]}$, that injects  via the period map  into $H^1_{\eet}(X,\Q_p)$.
   Since  $H^1_{\eet}(X,\Q_p)\simeq V_p(\Jac(X))^*$ (the $\Q_p$-dual), this rank is the " \'etale  $p$-rank of the special fiber of the N\'eron model of the Jacobian of the curve".
    \subsection{Product of elliptic curves}Varieties that are not ordinary give rise to geometric syntomic cohomology groups that have nontrivial $C$-dimension. We will start with some preliminary computations. 
   \subsubsection{The group $H^0_+(\cdot)$}\label{thegroup}
Let $D$ be an admissible filtered $(\phi,N,G_K)$-module with $F^0D_K=D_K$ (this implies that the slopes of Frobenius are $\geq 0$). Then the group $H^0_+((D\otimes_{F^{\nr}}\B^+_{\st})^{N=0,\phi=p^r},F^r(D_K\otimes_K\B^+_{\dr})))$ is the largest subrepresentation $V_r$ of $V_{\st}(D)(r)$ with Hodge weights in $[0,r]$.
It corresponds 
to the largest (weakly) admissible submodule $D_r$ of $D$ with $F^{r+1}D_{r,K}=0$   (note that this is equivalent to all Hodge weights being less or equal to $r$ and it implies that all Frobenius slopes have the same property). 
   
  \subsubsection{Trace map}\label{trace}
  Let $X$ be a variety over $\ovk$ of dimension $d$.
  We claim that we have a canonical  isomorphism
 $ H^{2d}_{\synt}(X,d)\simeq \Qp$. Indeed,  by Lemma \ref{long}, we have  $H^{2d}_{\synt}(X,d)=H^0_+(\ovk,H^{2d}_{\G}(X,d))$ and 
 \begin{align*}
 H^0_{+}(\ovk,H_{\G}^{2d}(X,d)) & =\kker((H^{2d}_{\hk}(X)\otimes_{F^{\nr}}\B^+_{\st})^{N=0,\phi=p^d}\to (H^{2d}_{\dr}(X)\otimes_{\ovk}\B^+_{\dr})/F^d)\\
 & =(H^{2d}_{\hk}(X)\otimes_{F^{\nr}}\B^+_{\st})^{N=0,\phi=p^d}
  \end{align*}
  The last equality holds because $F^dH^{2d}_{\dr}(X)=H^{2d}_{\dr}(X)$. Since $H^{2d}_{\hk}(X)\simeq F^{\nr}$ with Frobenius $p^d\phi$, we have 
  $(H^{2d}_{\hk}(X)\otimes_{F^{\nr}}\B^+_{\st})^{N=0,\phi=p^d}=\B_{\crr}^{+,\phi=1}=\Q_p$, as wanted.
  
   \subsubsection{Product of elliptic curves}
   We are now ready to do computations for products of certain elliptic curves.
   \begin{example} Let $E$ be an elliptic curve over $K$ with a supersingular reduction. Let $X=E\times E$.    
   \begin{enumerate}
   \item We have the following exact sequence of Galois representations
    $$
   0\to H^2_{\synt}(X_{\ovk},1)\to H^2_{\eet}(X_{\ovk},\Q_p(1))    \to C(-1)\to H^3_{\synt}(X_{\ovk},1)\to 0  
      $$
   \item If $\Sym^2 H^1_{\eet}(E_{\ovk})$ is an irreducible Galois representation, then   $$H^2_{\eet}(X_{\ovk},\Q_p(1))/ H^2_{\synt}(X_{\ovk},1)\simeq \Sym^2 H^1_{\eet}(E_{\ovk},\Q_p)(1),\quad H^3_{\synt}(X_{\ovk},1)\simeq C(-1)/\Sym^2 H^1_{\eet}(E_{\ovk},\Q_p)(1).$$ 
   \end{enumerate}
   \end{example} 
   \begin{proof}
   We note that we have the following exact sequence of Galois representations
   \begin{equation}
   \label{seqq}
   0\to H^2_{\synt}(X_{\ovk},1)\to  (H^2_{\hk}(X)\otimes_{F}\B_{\crr}^+)^{\phi=p}\lomapr{\iota_{\dr}} (H^2_{\dr}(X)\otimes_K\B_{\dr}^+)/F^1\to H^3_{\synt}(X_{\ovk},1)\to 0
   \end{equation}
It is obtained from the fundamental long exact sequence. The exactness on the left follows from Lemma \ref{long}. For the exactness on the right, we note, that using the
      K\"unneth formula in Hyodo-Kato cohomology and \ref{trace}, we get that
      $$
      (H^3_{\hk}(X)\otimes_{F}\B_{\crr}^+)^{\phi=p}=(H^1_{\hk}(E)\otimes_{F}\B_{\crr}^+)^{\phi=1} \oplus  (H^1_{\hk}(E)\otimes_{F}\B_{\crr}^+)^{\phi=1}. 
            $$
     Using the fact that the slope of the Frobenius on $H^1_{\hk}(E)$ is  $1/2$ we obtain
     $$  (H^3_{\hk}(X)\otimes_{F}\B_{\crr}^+)^{\phi=p}=(\B_{\crr}^{+,\phi=p^{-1/2}})^4 =0.   
           $$
    
      Consider now the following commutative diagram induces by the period maps
   $$\xymatrix{
   H^2_{\synt}(X_{\ovk},1)\ar@{^(->}[r]\ar@{^(->}[d]^{\rho_{\synt}} &(H^2_{\hk}(X)\otimes_{F}\B^+_{\crr})^{\phi=p} \ar[r]  \ar@{^(->}[d]^{\rho_{\hk}}&  (H^2_{\dr}(X)\otimes_K\B^+_{\dr})/F^1\ar@{->>}[r]\ar@{^(->}[d]^{\rho_{\dr}} &  H^3_{\synt}(X_{\ovk},1) \\
    H^2_{\eet}(X_{\ovk},\Q_p(1))\ar@{^(->}[r]\ar@{->>}[d] &  (H^2_{\hk}(X)\otimes_{F}\B_{\crr})^{\phi=p}\ar@{->>}[r]\ar@{->>}[d] &  (H^2_{\dr}(X)\otimes_K\B_{\dr})/F^1\ar@{->>}[d] \\\
   C(-1)\ar@{^(->}[r] & \coker{\rho_{\hk}}\ar@{->>}[r]^f & \coker{\rho_{\dr}}
   }
   $$
   The rows and columns are exact. The fact that $\ker(f)\simeq F^2H^2_{\dr}(X)\otimes_KC(-1)\simeq C(-1)$ follows from the fundamental exact sequence
   $$
   0\to \Q_pt\to \B^{+,\phi=p}_{\crr}\to \B^+_{\dr}/F^1\to 0
   $$
   Snake Lemma yields the following exact sequence
   $$
   0\to H^2_{\synt}(X_{\ovk},1)\to H^2_{\eet}(X_{\ovk},\Q_p(1))    \to C(-1)\to H^3_{\synt}(X_{\ovk},1)\to 0,  
      $$
     as claimed in the first part of our example. 
     
          We argue now for the second part. Since de Rham cohomology satisfies K\"unneth formula, the computations in \ref{trace} yield that $(H^2_{\dr}(X)\otimes_K\B_{\dr}^+)/F^1\simeq C$. In a similar way, using the
      K\"unneth formula in Hyodo-Kato cohomology and \ref{trace}, we get that
      $$
      (H^2_{\hk}(X)\otimes_{F}\B_{\crr}^+)^{\phi=p}=(H^1_{\hk}(E)^{\otimes 2}\otimes_{F}\B_{\crr}^+)^{\phi=p}\oplus \Q_p^2      
      $$
      and the vector space $\Q_p^2$ is in the kernel of $\iota_{\dr}$ (since $F^1H^2_{\dr}(E)=H^2_{\dr}(E)$). Hence we have the exact sequence
     $$
       0\to H^2_{\synt}(X_{\ovk},1)\to (H^1_{\hk}(E)^{\otimes 2}\otimes_{F}\B_{\crr}^+)^{\phi=p}\oplus \Q_p^2\to C\to H^3_{\synt}(X_{\ovk},1)\to 0     
     $$

     Using the fact that the slope of the Frobenius on $H^1_{\hk}(E)$ is  $1/2$ and $\B^{+,\phi=1}_{\crr}=\Q_p$, we compute that $
     (H^1_{\hk}(X)^{\otimes 2}\otimes_{F}\B_{\crr}^+)^{\phi=p}\simeq \Q_p^4$.      
   Since $H^1_{\hk}(E)^{\otimes 2}=\Sym^2 H^1_{\hk}(E)\oplus F$, we have  $$(H^1_{\hk}(E)^{\otimes 2}\otimes_{F}\B_{\crr}^+)^{\phi=p}=
   (\Sym^2 H^1_{\hk}(E)\otimes_F\B_{\crr}^+)^{\phi=p}\oplus \Q_p$$
    and, by \ref{thegroup},  $\Q_p$ is in the kernel of $\iota_{\dr}$.
   Hence we have the exact sequence
     $$
       0\to H^2_{\synt}(X_{\ovk},1)\to (\Sym^2 H^1_{\hk}(E)\otimes_{F}\B_{\crr}^+)^{\phi=p}\oplus \Q_p^3\to C\to H^3_{\synt}(X_{\ovk},1)\to 0     
     $$
   It follows that,   if $\Sym^2 H^1_{\eet}(E_{\ovk},\Q_p)$ is an irreducible Galois representation, then $H^2_{\synt}(X_{\ovk},1)\simeq \Q_p^3$ and $ H^2_{\eet}(X_{\ovk},\Q_p(1))/ H^2_{\synt}(X_{\ovk},1)\simeq \Sym^2 H^1_{\eet}(E_{\ovk},\Q_p)(1)$, as wanted.
   \end{proof}   
 \begin{remark}
   The above proof shows that $H^3_{\synt}(X_{\ovk},1) $ as a Galois representation is a quotient of $C(-1)$ and of $C$ by a finite rank $\Q_p$-Galois representation.   This type of curious phenomena was studied by Fontaine in \cite{FD}.
   \end{remark}
   \subsection{Galois descent}
Let $X$ be a variety over $K$. Then the syntomic complex $\R\Gamma_{\synt}(X_{\ovk},r)$ is equipped with a $G_K$-action. 
   Set\footnote{See \cite[4.2]{NN} for the necessary formalities concerning continuous Galois cohomology and Hochschild-Serre spectral sequence.} $\R\Gamma^{\natural}_{\synt}(X,r):=\R\Gamma(G_K,\R\Gamma_{\synt}(X_{\ovk},r))$.
Since, $$\R\Gamma_{\eet}(X,\Q_p(r))=\R\Gamma(G_K,\R\Gamma_{\eet}(X_{\ovk},\Q_p(r))),$$ we have the induced syntomic period map 
$$
\rho_{\synt}^{\natural}: \R\Gamma^{\natural}_{\synt}(X,r)\to \R\Gamma_{\eet}(X,\Q_p(r)).
$$
\begin{proposition}
There is a canonical distinguished triangle
 
 \begin{equation}
 \label{disting}
 \R\Gamma^{\natural}_{\synt}(X,r)\lomapr{\rho^{\natural}_{\synt}}  \R\Gamma_{\eet}(X,\Q_p(r))\to 
 \R\Gamma(G_K, (\R\Gamma^B _{\hk}(X_{\ovk})\otimes_{F^{\nr}}\B_{\crr}/\B^+_{\crr})^{\phi=p^r})
 \end{equation}

\end{proposition}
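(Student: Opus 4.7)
The plan is to produce the triangle at the level of $G_K$-equivariant complexes over $\ovk$ and then apply $\R\Gamma(G_K,-)$.  The starting point is the observation that, via the presentation (\ref{analog1}) together with Beilinson's comparison quasi-isomorphisms $\rho_{\hk},\rho_{\dr}$ and the classical fundamental exact sequence $0\to\Q_p(r)\to\B_{\crr}^{\phi=p^r}\to\B_{\dr}/F^r\to 0$, both $\R\Gamma^B_{\synt}(X_{\ovk},r)$ and $\R\Gamma_{\eet}(X_{\ovk},\Q_p(r))$ can be written in parallel form as mapping fibers
\begin{align*}
\R\Gamma^B_{\synt}(X_{\ovk},r)&\simeq \bigl[(\R\Gamma^B_{\hk}(X_{\ovk})\otimes_{F^{\nr}}\B^+_{\crr})^{\phi=p^r}\lomapr{\iota_{\dr}\otimes\iota}(\R\Gamma^B_{\dr}(X_{\ovk})\otimes_{\ovk}\B^+_{\dr})/F^r\bigr],\\
\R\Gamma_{\eet}(X_{\ovk},\Q_p(r))&\simeq \bigl[(\R\Gamma^B_{\hk}(X_{\ovk})\otimes_{F^{\nr}}\B_{\crr})^{\phi=p^r}\lomapr{\iota_{\dr}\otimes\iota}(\R\Gamma^B_{\dr}(X_{\ovk})\otimes_{\ovk}\B_{\dr})/F^r\bigr],
\end{align*}
with $\rho_{\synt}$ induced by the canonical $G_K$-equivariant inclusions $\B^+_{\crr}\hookrightarrow\B_{\crr}$ and $\B^+_{\dr}\hookrightarrow\B_{\dr}$.

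Feeding the short exact sequences of coefficient rings $0\to\B^+_{\crr}\to\B_{\crr}\to\B_{\crr}/\B^+_{\crr}\to 0$ and $0\to\B^+_{\dr}\to\B_{\dr}\to\B_{\dr}/\B^+_{\dr}\to 0$ (using $F^r\B_{\dr}=F^r\B^+_{\dr}=t^r\B^+_{\dr}$) termwise into these fibers produces a short exact sequence of two-term mapping fibers, hence a distinguished triangle whose third term is
$$
C:=\bigl[(\R\Gamma^B_{\hk}(X_{\ovk})\otimes_{F^{\nr}}\B_{\crr}/\B^+_{\crr})^{\phi=p^r}\lomapr{\iota_{\dr}\otimes\iota}\R\Gamma^B_{\dr}(X_{\ovk})\otimes_{\ovk}\B_{\dr}/\B^+_{\dr}\bigr].
$$

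To identify $C$ with the single Hyodo--Kato piece $(\R\Gamma^B_{\hk}(X_{\ovk})\otimes_{F^{\nr}}\B_{\crr}/\B^+_{\crr})^{\phi=p^r}$, I would use the Hyodo--Kato isomorphism $\iota_{\dr}$ to rewrite $\R\Gamma^B_{\dr}\otimes\B_{\dr}\simeq \R\Gamma^B_{\hk}\otimes\B_{\dr}$ and thereby reduce the connecting arrow in $C$ to $\iota\otimes\id$ at the level of coefficients; then compare the two fundamental exact sequences for $\B^+_{\crr}$ and for $\B_{\crr}$ (both computing $\R\Gamma_{\synt}(\ovk,r)\simeq\Q_p(r)$, as in the example following (\ref{seq1})) to exhibit the required simplification in the derived category.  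This identification step, and its compatibility with $\otimes\R\Gamma^B_{\hk}(X_{\ovk})$, is the main technical obstacle; all maps involved are $G_K$-equivariant by construction, so applying $\R\Gamma(G_K,-)$ to the resulting distinguished triangle of $G_K$-complexes yields (\ref{disting}).
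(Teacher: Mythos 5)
Your setup — two parallel mapping fibers over $\B^+$-rings and over the full $\B$-rings, whose quotient is the two-term complex
$$C:=\bigl[(\R\Gamma^B_{\hk}(X_{\ovk})\otimes_{F^{\nr}}\B_{\crr}/\B^+_{\crr})^{\phi=p^r}\lomapr{\iota_{\dr}\otimes\iota}\R\Gamma^B_{\dr}(X_{\ovk})\otimes_{\ovk}\B_{\dr}/\B^+_{\dr}\bigr]$$
— matches the paper exactly. But the step you flag as ``the main technical obstacle'' is not an obstacle to be overcome: it is genuinely false, and this is where your proof breaks. The complex $C$ is \emph{not} quasi-isomorphic to the single Hyodo--Kato term as a complex of $G_K$-modules. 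The second term is $\R\Gamma^B_{\dr}(X_{\ovk})\otimes_{\ovk}(\B_{\dr}/\B^+_{\dr})$ (the $F^r$-quotients cancel because $F^r\B_{\dr}=F^r\B^+_{\dr}$), and $\B_{\dr}/\B^+_{\dr}$ is a large nonzero $G_K$-module; there is no formal manipulation with the Hyodo--Kato isomorphism or with the two fundamental exact sequences that makes this term disappear. Indeed $\iota_{\dr}\otimes\iota$ is not even surjective onto it in general.

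The collapse you want happens only \emph{after} applying $\R\Gamma(G_K,-)$, and the key input you are missing is Tate's vanishing theorem. The paper applies $\R\Gamma(G_K,-)$ to $C$ first, obtaining
$$\bigl[\R\Gamma(G_K,A)\to\R\Gamma(G_K,B)\bigr],$$
with $B=\R\Gamma^B_{\dr}(X_{\ovk})\otimes_{\ovk}\B_{\dr}/\B^+_{\dr}$, and then shows $\R\Gamma(G_K,B)$ is acyclic: after descent to $K$, it suffices to treat a finite filtered $K$-vector space $D_K$ with $F^0D_K=D_K$, and then $B(D_K)=D_K\otimes_K(\B_{\dr}/\B^+_{\dr})$ carries a filtration whose graded pieces are copies of $C(j)$ with $j<0$, and $H^*(G_K,C(j))=0$ for $j<0$. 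That cohomological vanishing is the actual content of the proposition; without it there is no way to reduce $C$ to the Hyodo--Kato term, and your plan to do the reduction before taking Galois cohomology cannot be carried out.
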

\begin{proof}
Consider  the following complex
$$
C^+(X_{\ovk}):=[\xymatrix@C=45pt{(\R\Gamma^B_{\hk}(X_{\ovk})\otimes_{F^{\nr}}\B^+_{\st})^{N=0,\phi=p^r} \ar[r]^-{\iota_{\dr}\otimes \iota} &   (\R\Gamma ^B_{\dr}(X_{\ovk})\otimes_{\ovk}\B^+_{\dr})/F^r]}
$$
representing $\R\Gamma_{\synt}(X_{\ovk},r)$. 
Define $C(X_{\ovk})$ by omitting the superscript $+$ in the above definition. 
We have \begin{align*}
C(X_{\ovk})/C^+(X_{\ovk})  =[A(\R\Gamma^B_{\hk}(X_{\ovk}))\veryverylomapr{\iota_{\dr}\otimes \iota}  B(\R\Gamma^B_{\dr}(X_{\ovk}))],
\end{align*}
where 
\begin{align*}
A(\R\Gamma^B_{\hk}(X_{\ovk})) & :=(\R\Gamma^B _{\hk}(X_{\ovk})\otimes_{F^{\nr}}\B_{\st}/\B^+_{\st})^{N=0,\phi=p^r},\\
B(\R\Gamma^B_{\dr}(X_{\ovk})) &:=  (\R\Gamma^B _{\dr}(X_{\ovk})\otimes_{\ovk}\B_{\dr})/F^r/ (\R\Gamma ^B_{\dr}(X_{\ovk})\otimes_{\ovk}\B^+_{\dr})/F^r.
\end{align*}

Hence
\begin{align*}
\R\Gamma(G_K,  C(X_{\ovk})/C^+(X_{\ovk}))
 =  [\R\Gamma(G_K,  A(\R\Gamma^B_{\hk}(X_{\ovk}))) 
   \veryverylomapr{\iota_{\dr}\otimes\iota}  \R\Gamma(G_K,B(\R\Gamma^B_{\dr}(X_{\ovk})))]
\end{align*}
and it suffices to show that the complex
$
\R\Gamma(G_K,  B(\R\Gamma^B_{\dr}(X_{\ovk})))$
is acyclic.  Since we have a quasi-isomorphism
$\R\Gamma^B_{\dr}(X)\otimes_K\ovk\stackrel{\sim}{\to}\R\Gamma^B_{\dr}(X_{\ovk})$, it suffices to show that, for a  finite rank  $K$-vector space $D_K$ with a descending  filtration such that $F^0D_K=D_K$,  the complex $\R\Gamma(G_K,B(D_K))$ is acyclic. But  the complex $B(D_K)$ has a natural filtration with graded pieces equal to copies of $C(j)$'s for strictly negative  $j$'s. Since  $H^*(G_K,C(j))=0,j < 0$, the acyclicity of the complex $\R\Gamma(G_K,B(D_K))$ follows. 

  \end{proof}

 \begin{remark}
It is not clear to us what is the relation between the distinguished triangle (\ref{disting})  and the Bloch-Kato (dual) exponential exact sequence.
\end{remark}
 
\end{document}